\newtheorem{theorem}{Theorem}[section]
\newtheorem{defi}[theorem]{Definition}
\newtheorem{lemma}[theorem]{Lemma}
\newtheorem{coro}[theorem]{Corollary}
\newtheorem{proposition}[theorem]{Proposition}
\newtheorem{remark}[theorem]{Remark}
\numberwithin{equation}{section}
\numberwithin{equation}{section}
\title{Drinfeld Isomorphism for Novel Quantum Affine Algebra of Type \(A_{1}^{(1)}\)}
\author[R.S. Zhuang]{Rushu Zhuang}
\address[R.S.Zhuang]{School of Mathematics and Physics, China University of Geosciences, Wuhan 430074, China}
\email{zhuangrushu@cug.edu.cn}
\author[G. Feng]{Ge Feng}
\address[G. Feng]{College of Science, University of Shanghai for Science and Technology, Shanghai 200093, China}
\email{fengge@usst.edu.cn}
\author[N.H. Hu]{Naihong Hu$^*$}
\address[N.H. Hu]{School of Mathematical Sciences, MOE Key Laboratory of Mathematics and Engineering Applications $\&$ Shanghai Key Laboratory of PMMP, East China Normal University, Shanghai 200241, China}
\email{nhhu@math.ecnu.edu.cn}
\subjclass{Primary 17B37, 81R50; Secondary 17B35}
\thanks{*Corresponding author}
\begin{document}

\begin{abstract}
In this paper, we first review the definition of the novel quantum affine algebra \(U_{\textbf{q}}(\widehat{\mathfrak{sl}}_2)\) of type \(A_{1}^{(1)}\) given in \cite{FHZ, HZhuang}. Furthermore, by introducing \(\Omega\)-invariant generating functions, we construct the Drinfeld realization \(U^{D}_{\textbf{q}}(\widehat{\mathfrak{sl}}_2)\) of this algebra, and prove that \(U_{\textbf{q}}(\widehat{\mathfrak{sl}}_2)\) and \(U^{D}_{\textbf{q}}(\widehat{\mathfrak{sl}}_2)\) are algebraically isomorphic, which is known as the Drinfeld Isomorphism.
\end{abstract}

\keywords{Quantum affine algebra; Drinfeld realization; Drinfeld Isomorphism}

\maketitle
\tableofcontents

\section{Introduction}
In 2000, Benkart and Witherspoon revitalized the research of two-parameter quantum groups. They studied the structures of two-parameter quantum groups \(U_{r, s}(\mathfrak{g})\) for \(\mathfrak{g}=\mathfrak{gl}_n\) or \(\mathfrak{sl}_n\) in \cite{BW1} previously obtained by Takeuchi \cite{T}, and the finite-dimensional representations and Schur-Weyl duality for type A in \cite{BW2}.
Later, Bergeron, Gao and Hu \cite{BGH} gave a presentation of two-parameter quantum groups \(U_{r,s}(\mathfrak{g})\) for the classical types \(\mathfrak{so}_{2n+1}\), \(\mathfrak{sp}_{2n}\) and \(\mathfrak{so}_{2n}\), and 
further investigated the environment conditions upon which the Lusztig’s symmetries existed between
(\(U_{r,s}(\mathfrak{g}), \langle,\rangle\)) and its associated object (\(U_{s^{-1},r^{-1}}(\mathfrak{g}), \langle|\rangle\)). 
In \cite{PHR}, Pei-Hu-Rosso studied the multi-parameter quantum groups, together with their representations in category \(\mathcal{O}\). In particular, they presented two explicit descriptions: as Hopf 2-cocycle deformation, and as the multi-parameter quantum shuffle realization of the positive part.
A generalisation to multi-parameter quantum groups was obtained by Heckenberger \cite{H1}, which provided an explicit realization model for the abstract concept "Weyl groupoid" playing a key role in the classification of both Nichols algebras of diagonal type (cf. \cite{H2}) and finite-dimensional pointed Hopf algebras with abelian group algebras as the coradicals (\cite{AS}).

On the other hand, Hu-Rosso-Zhang first studied two-parameter quantum affine algebras associated to affine Lie algebras of type \(A_n^{(1)}\), and gave the descriptions of the structure and Drinfeld realization of \(U_{r, s}(\widehat{\mathfrak{sl}}_n)\), as well as the quantum affine Lyndon basis (see \cite{8}). The discussions for the other affine cases of untwisted types and the corresponding vertex operators constructions for all untwisted types have been done in \cite{HZ,Z}. In \cite{JZ1}, Jing-Zhang used a combinatorial model of Young diagrams, gaving a fermionic realization of the two-parameter quantum affine algebra of type \(A_n^{(1)}\); while \cite{JZ2} provided a group-theoretic realization of two-parameter quantum toroidal algebras using finite subgroups of \(SL_2(\mathbb{C})\) via McKay correspondence.

For a symmetrizable generalized Cartan matrix \(A=(a_{ij})_{1\le i,j\le n}\) (i.e. \(DA\) is symmetric for \(D=\operatorname{diag}(d_1,\dots ,d_n)\) with coprime positive integers \(d_i\)) one has the quantized enveloping algebra \(U_{\mathbf q}(\mathfrak g_A)\) of the Kac-Moody algebra \(\mathfrak g_A\) \cite{PHR}. Here the matrix \(\mathbf q=(q_{ij})\) satisfies the Cartan condition \(q_{ij}q_{ji}=q_{ii}^{a_{ij}}\) and is called the structure constant matrix (or braiding matrix) of \(U_{\mathbf q}(\mathfrak g_A)\).  
In \cite{HZhuang},  from a multi-parameter perspective, that is, their structure constant matrix taking the form \[ \mathbf q=\begin{pmatrix} \epsilon_1 q^{2} & t q^{-2}\\[2pt] t^{-1}q^{-2} & \epsilon_2 q^{2} \end{pmatrix},\qquad  \epsilon_i\in\{\pm1\},\; t\in\mathbb Q(q)^*, \] and satisfying the Cartan condition, we defined a class of sign-deformed quantum affine algebra \(U^{(\varepsilon_1,\varepsilon_2,t)}_q(\widehat{\mathfrak{sl}}_2)\). In particular, we studied its structure in the case when \(\epsilon_1=\epsilon_2=-1\), \(t=1\); while in \cite{FHZ}, we considered the  case \(\epsilon_1=\epsilon_2=1\), \(t=-1\). In both cases, we explicitly described their PBW bases, and proved that they are not isomorphic to the standard affine quantum group of type \(A_1^{(1)}\), as (Hopf) algebras.

The present paper is devoted to working out the Drinfeld realization and demonstrating the Drinfeld Isomorphism for the above quantum affine algebra \(U_{\mathbf q}(\widehat{\mathfrak{sl}}_2)\) in the specific case when \(\epsilon_1=\epsilon_2=-1\), \(t=1\).

In Section~2, we recall the definition of \(U_{\mathbf q}(\widehat{\mathfrak{sl}}_2)\), its Hopf algebra structure, the automorphisms induced by the braid group action, and the definitions of the real and imaginary root vectors and their commutation relations.  
Section~3 is the core of the current work. We construct \(\Omega\)-invariant generating functions and use them to give the intrinsic Drinfeld realisation \(U_{\mathbf q}^{D}(\widehat{\mathfrak{sl}}_2)\). Then we prove that \(U_{\mathbf q}(\widehat{\mathfrak{sl}}_2)\) and \(U_{\mathbf q}^{D}(\widehat{\mathfrak{sl}}_2)\) are isomorphic by exhibiting a pair of mutually inverse algebra homomorphisms (the Drinfeld Isomorphism).

\section{Notations and preliminaries}
In this section, we recall the definition of the novel 
$A^{(1)}_1$-type quantum affine algebra 
\(U_{\textbf{q}}(\widehat{\mathfrak{sl}}_2)\)
and summarize the related results from previous works. For comprehensive details and proofs, we refer the reader to references \cite{FHZ} \& \cite{HZhuang}.

\subsection{Hopf algebra and Drinfeld double.}
Let 
\begin{equation}
A=(a_{ij})_{i,j\in\{1,2\}}=\left(\begin{array}{rr}2&-2\\-2&2\\\end{array}\right)
\end{equation}
denote the generalized Cartan matrix of type \(A^{(1)}_1\). We first present the definition of the novel quantum affine algebra \(U_{\textbf{q}}(\widehat{\mathfrak{sl}}_2)\) in terms of its generators and relations.
\begin{defi} [\cite{HZhuang}, Def. 3.1]
The novel \(A^{(1)}_1\)-type quantum affine algebra \(U_{\textbf{q}}(\widehat{\mathfrak{sl}}_2)\) is the unital associative algebra over the field 
\(\mathbb{C}(\rm q)\) generated by the elements 
\begin{equation}
e_i, f_{i},k^{\pm1}_{i},\quad i=1,2,
\end{equation}
subject to the following defining relations (A1)–(A4):

(A1) The group-like elements \(k_{1}\)
and \(k_{2}\) commute with each other. 

(A2)  Commutation relations between \(k_{i}\)
and \(e_j, f_{j}\):
\begin{align*}
&k_ie_i=-q^2e_ik_i, \quad k_if_i=-q^{-2}f_ik_i,\\
&k_ie_j=q^{-2}e_jk_i, \quad \,k_if_j=q^2f_jk_i, \qquad (i\ne j).
\end{align*}

(A3) Cross relations between \(e_i\) and 
\(f_j\,(i,\;j\in\{1,\;2\})\):
\[[\,e_i,f_j\,]=\delta_{ij}\frac{k_i-k^{-1}_i}{q-q^{-1}}.\]

(A4) \(q\)-Serre relations: for \(i\neq j\),  
\begin{align*}
&(ad_l e_i)^{1-a_{ij}}(e_j)=e_i^3e_j+(1-q^2-q^{-2})e_i^2e_je_i+(1-q^2-q^{-2})e_ie_je_i^2+e_je_i^3=0,\\
&(ad_r f_i)^{1-a_{ij}}(f_j)=f_i^3f_j+(1-q^2-q^{-2})f_i^2f_jf_i+(1-q^2-q^{-2})f_if_jf_i^2+f_jf_i^3=0,
\end{align*}
where the definitions of the left-adjoint action \(ad_l e_i\) and the right-adjoint action
\(ad_r f_i\) are given in the following sense:
\[ad_l a(b)=\sum\limits_{(a)}a_{(1)}bS(a_{(2)}),\qquad ad_r a(b)=\sum\limits_{(a)}S(a_{(1)})ba_{(2)},\quad \forall\, a,b\in U_{\bm q}(\widehat{\mathfrak{sl}}_2).\]
where \(\Delta(a)=\sum\limits_{(a)}a_{(1)}\otimes a_{(2)}\) is given by Proposition 2.2 below.
\end{defi}

\begin{proposition}[\cite{HZhuang}, Def. 3.1]
The algebra \(U_{\textbf{q}}(\widehat{\mathfrak{sl}}_2)\) is a Hopf algebra with the comultiplication $\Delta$, the counit $\varepsilon$, the antipode $S$, satisfying the following relations:
\begin{tabenum}[(B1)]
\qquad\qquad\tabenumitem $\Delta(e_{i})= e_{i}\otimes 1+k_{i}\otimes e_{i},$\qquad
\tabenumitem 
$\Delta(f_{i})= 1\otimes f_i+f_i\otimes k^{-1}_{i},$\\ \tabenumitem 
$\Delta(k_i^{\pm1})= k_i^{\pm1}\otimes k_i^{\pm1},$
\tabenumitem $\varepsilon(e_{i})=\varepsilon(f_{i})=0,$\\
\tabenumitem $\varepsilon(k_i^{\pm1})=1,$
\tabenumitem $S(k_i^{\pm 1})=k_i^{\mp 1},$\\
\tabenumitem $S(e_{i})=-k_i^{-1}e_{i},$\quad
\tabenumitem $S(f_{i})=-f_{i}k_i.$
\end{tabenum}
\end{proposition}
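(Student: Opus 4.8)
The plan is to verify the three packages of Hopf axioms separately. Since \(\Delta\), \(\varepsilon\) and \(S\) are prescribed on the generators \(e_i,f_i,k_i^{\pm1}\) and extended (anti)multiplicatively to the free algebra, the whole problem reduces to two tasks: (i) checking that each of \(\Delta,\varepsilon\) descends to the quotient by the ideal of relations (A1)–(A4), and that \(S\) does so as an algebra anti-homomorphism; and (ii) checking coassociativity \((\Delta\otimes\mathrm{id})\Delta=(\mathrm{id}\otimes\Delta)\Delta\), the counit identities, and the antipode identity \(m(S\otimes\mathrm{id})\Delta=\eta\varepsilon=m(\mathrm{id}\otimes S)\Delta\). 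For (ii), because all the maps in sight are algebra (anti)homomorphisms and the \(e_i,f_i,k_i^{\pm1}\) generate, it suffices to verify these identities on generators, which is formal; so the real content lies in (i).

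First I would check that \(\Delta\) annihilates each defining relation. Relation (A1) is immediate because the \(\Delta(k_i)=k_i\otimes k_i\) are commuting group-likes. For (A2), expanding \(\Delta(k_i)\Delta(e_j)\) against \(\Delta(e_j)\Delta(k_i)\) shows that the scalar (\(-q^2\) or \(q^{-2}\)) propagates identically through both tensor legs, so the relation is preserved. The first informative computation is the cross relation (A3): a direct expansion gives
\[
\Delta([e_i,f_j])=[e_i,f_j]\otimes k_j^{-1}+k_i\otimes[e_i,f_j]+\bigl(k_if_j\otimes e_ik_j^{-1}-f_jk_i\otimes k_j^{-1}e_i\bigr),
\]
and the parenthesised mixed terms cancel in both cases on account of the sign-twisted rules (A2); the remaining diagonal terms then reassemble, for \(i=j\), into \(\tfrac{k_i\otimes k_i-k_i^{-1}\otimes k_i^{-1}}{q-q^{-1}}=\Delta(\tfrac{k_i-k_i^{-1}}{q-q^{-1}})\), and vanish for \(i\neq j\) because \([e_i,f_j]=0\). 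The analogous (shorter) checks for \(\varepsilon\) are trivial.

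The hard part will be the compatibility of \(\Delta\), and of the anti-homomorphism \(S\), with the \(q\)-Serre relations (A4). Rather than expanding the degree-four elements in \(U_{\mathbf q}(\widehat{\mathfrak{sl}}_2)^{\otimes2}\) by brute force, I would use the Hopf-theoretic meaning of the adjoint action. Since \(e_i\) is \((k_i,1)\)-skew primitive and \(ad_l e_i(\,\cdot\,)=e_i(\cdot)-k_i(\cdot)k_i^{-1}e_i\) is the associated skew derivation, an induction on the exponent shows that the Serre element \(u_{ij}:=(ad_l e_i)^{1-a_{ij}}(e_j)\) is itself skew primitive, namely \(\Delta(u_{ij})=u_{ij}\otimes1+g_{ij}\otimes u_{ij}\) with group-like \(g_{ij}=k_i^{1-a_{ij}}k_j\). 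Consequently imposing \(u_{ij}=0\) is compatible with \(\Delta\) (the Serre relations generate a Hopf ideal), and \(S(u_{ij})=-g_{ij}^{-1}u_{ij}\) takes care of \(S\). Tracking the sign-deformed coefficients \(1-q^2-q^{-2}\) while confirming that the skew-derivation identities still close up under the twisted braiding is where I expect the bulk of the bookkeeping to lie. Alternatively, one can sidestep this computation altogether by observing that \(\mathbf q\) satisfies the Cartan condition, so \(U_{\mathbf q}(\widehat{\mathfrak{sl}}_2)\) is an instance of the multiparameter construction of \cite{PHR}; realizing it as the Drinfeld double of its two sign-deformed Borel Hopf subalgebras, identified along the Cartan part, then makes the Hopf axioms automatic, with (A3) encoding the Hopf pairing.

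Finally I would confirm the antipode identity on generators: for example \(m(S\otimes\mathrm{id})\Delta(e_i)=S(e_i)+S(k_i)e_i=-k_i^{-1}e_i+k_i^{-1}e_i=0=\eta\varepsilon(e_i)\), and likewise \(m(\mathrm{id}\otimes S)\Delta(f_i)=S(f_i)+f_ik_i=-f_ik_i+f_ik_i=0\), the \(k_i^{\pm1}\) being group-like. Together with the coassociativity and counit checks on generators, this completes the verification that \((U_{\mathbf q}(\widehat{\mathfrak{sl}}_2),\Delta,\varepsilon,S)\) is a Hopf algebra.
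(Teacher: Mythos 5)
The paper offers no proof of this proposition at all --- it is imported verbatim from \cite{HZhuang} (Def.~3.1), so there is nothing internal to compare against; judged on its own terms, your verification is correct and follows the standard route. Your expansion of $\Delta([e_i,f_j])$ and the cancellation of the mixed terms via the sign-twisted rules (A2) check out (for $i=j$ both cross terms equal $-q^{-2}f_ik_i\otimes e_ik_i^{-1}$; for $i\neq j$ both equal $q^{2}f_jk_i\otimes e_ik_j^{-1}$), as do the antipode identities on generators, and the reduction of coassociativity and the antipode axiom to generators is legitimate by the usual convolution argument. The one place where your write-up is looser than it should be is the Serre relations: the phrase ``an induction on the exponent shows that $u_{ij}=(ad_l e_i)^{1-a_{ij}}(e_j)$ is itself skew primitive'' reads as if skew-primitivity held at every exponent, which is false --- $\Delta\bigl((ad_l e_i)^m(e_j)\bigr)$ contains cross terms weighted by (sign-twisted) $q$-binomial coefficients, and these vanish only at the critical exponent $m=1-a_{ij}=3$; that vanishing, with $q_{ii}=-q^{2}$ producing the coefficient $1-q^{2}-q^{-2}$ in (A4), is the actual content and must be carried out, not merely invoked. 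Your fallback via the Drinfeld double is in fact how the cited source organizes matters (compare Proposition~2.4 of the present paper, which identifies $U_{\mathbf q}(\widehat{\mathfrak{sl}}_2)$ with $\mathcal{D}(\widehat{\mathcal{B}},\widehat{\mathcal{B}}')$), so either completing the binomial computation or leaning fully on the double construction would close the argument.
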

Let \(\widehat{\mathcal{B}}\,\) 
 (resp. \(\widehat{\mathcal{B}^{\prime}}\)) denote the Hopf subalgebra of \(\,U_{\textbf{q}}(\widehat{\mathfrak{sl}}_2)\) generated by \(e_i,k^{\pm1}_i\,\) (resp. \(f_i, k^{\pm1}_i\)) with \(1\le i\le2\). 

\begin{proposition}[\cite{HZhuang}, Prop. 2.3]
There exists a unique non-degenerate skew-dual pairing \(\langle , \rangle: \widehat{\mathcal{B}^{\prime}} \times \widehat{\mathcal{B}} \to \mathbb{Q}(q)\) of the Hopf subalgebras \(\widehat{\mathcal{B}}\) and \(\widehat{\mathcal{B}^{\prime}}\) such that
\begin{itemize}
\item 
\(\langle f_i, e_j \rangle = \dfrac{\delta_{ij}}{q^{-1} - q} \quad (i, j \in \{1, 2\})\);
\item \(\langle k^{\pm1}_i, k_j^{\mp 1} \rangle = {\langle k^{\pm1}_i, k_j^{\pm 1} \rangle}^{-1}=q_{ij}\), for all \(i, j \in \{1, 2\}\);
\item All other pairs of generators take the value \(0\).
\end{itemize}
Moreover, we have
\[
\langle S(a), S(b) \rangle = \langle a, b \rangle
\]
for \(a \in \widehat{\mathcal{B}^{\prime}}\) and \(b \in \widehat{\mathcal{B}}\), where \(S\) denotes the antipode of \(U_{\textbf{q}}(\widehat{\mathfrak{sl}}_2)\).
\end{proposition}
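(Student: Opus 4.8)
The plan is to recognise $\langle\,,\,\rangle$ as the canonical skew-Hopf pairing between the two Borel halves underlying the Drinfeld double, deriving uniqueness formally and then obtaining existence by extending the prescribed generator values and checking compatibility with the intra-Borel defining relations. \emph{Uniqueness} is immediate: a skew-dual pairing is bimultiplicative with respect to the two coproducts together with the normalisations $\langle a,1\rangle=\varepsilon(a)$ and $\langle 1,b\rangle=\varepsilon(b)$, so since $\widehat{\mathcal{B}}$ and $\widehat{\mathcal{B}^{\prime}}$ are generated by $\{e_i,k_i^{\pm1}\}$ and $\{f_i,k_i^{\pm1}\}$ with coproducts specified by (B1)--(B3), these axioms recursively express $\langle\,\text{monomial},\text{monomial}\,\rangle$ through the generator values. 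Any pairing with the prescribed values is thereby determined on all of $\widehat{\mathcal{B}^{\prime}}\times\widehat{\mathcal{B}}$.

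For \emph{existence} I would first build the form at the free level. Let $\widehat{\mathcal{B}}^{\mathrm f}$ and $\widehat{\mathcal{B}^{\prime}}^{\mathrm f}$ be the smash-product bialgebras in which the $k_i^{\pm1}$ are grouplike and the $e_i$ (resp.\ $f_i$) skew-primitive as in (B1)--(B2), subject to no further relations. Starting from the prescribed generator values, the recursion dictated by bimultiplicativity extends uniquely to a bilinear form on these free objects, there being no relations to obstruct it. It remains to show the form descends to the genuine quotients, i.e.\ that every defining relation lies in the pertinent radical. Since the right radical $\{b\in\widehat{\mathcal{B}}^{\mathrm f}:\langle\widehat{\mathcal{B}^{\prime}}^{\mathrm f},b\rangle=0\}$ is a two-sided ideal (immediate from bimultiplicativity), it suffices to annihilate the generators of each relation ideal; and by the $\mathbb{N}^2$-grading that counts the $e_i$ (resp.\ $f_i$), pairing a relation against an arbitrary opposite monomial reduces to a finite computation through the iterated coproduct. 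The $k$-commutativity (A1) and the $k$--$e$, $k$--$f$ relations (A2) then fall out at once from the grouplike behaviour of the $k_i$ and the values ${\langle k_i^{\pm1},k_j^{\pm1}\rangle}^{-1}=\langle k_i^{\pm1},k_j^{\mp1}\rangle=q_{ij}$.

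The decisive check, and the step I expect to be the main obstacle, is the $q$-Serre relation (A4). Its left-hand side has multidegree $(3,1)$ in the $e_i$, so only opposite monomials of the same multidegree can pair nontrivially against it, and bimultiplicativity turns the verification into pairing the four-fold coproduct of the quartic Serre expression against the four $f$-generators making up such a monomial. The subtlety is that the sign-twisted scalars $q_{ii}=-q^2$ and $q_{ij}=q^{-2}$ entering through (A2) propagate through this iterated coproduct, and one must confirm that the nonstandard coefficient $1-q^2-q^{-2}$ is exactly the value forcing the total pairing to vanish. This is precisely where the sign deformation departs from the classical one-parameter computation, so it is here that the calculation genuinely has to be carried out rather than quoted.

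Finally, for \emph{non-degeneracy} I would use that the pairing is graded, $\langle\,\widehat{\mathcal{B}^{\prime}}_{\mu},\widehat{\mathcal{B}}_{\nu}\,\rangle=0$ unless $\mu=\nu$: on the Cartan part the Gram matrix is governed by the invertible matrix $(q_{ij})$, while on each finite-dimensional graded nilpotent piece I would order the PBW monomials supplied by \cite{FHZ,HZhuang} so that the Gram matrix becomes triangular with diagonal entries nonzero powers of $\langle f_i,e_i\rangle=1/(q^{-1}-q)$, hence invertible; this is the second delicate point, owing to the imaginary root vectors in the affine setting. The antipode identity $\langle S(a),S(b)\rangle=\langle a,b\rangle$ is then a formal consequence of the skew-Hopf pairing axioms: with $S$ invertible one has $\langle S(a),b\rangle=\langle a,S^{-1}(b)\rangle$, and substituting $S(b)$ for $b$ yields the claim.
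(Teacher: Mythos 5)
The paper does not prove this proposition at all: it is imported verbatim from \cite{HZhuang} (Prop.~2.3), and Section~2 explicitly defers all proofs to \cite{FHZ,HZhuang}. So there is no in-paper argument to compare against; what can be said is that your strategy is exactly the standard one used in that line of work (Benkart--Witherspoon, Bergeron--Gao--Hu, Hu--Rosso--Zhang): uniqueness from bimultiplicativity and the generator values, existence by constructing the form on the free (smash-product) bialgebras and showing the relation ideals lie in the radical, and the antipode identity as a formal consequence of the skew-pairing axioms via \(\langle S(a),b\rangle=\langle a,S^{-1}(b)\rangle\). The reduction of the Serre check to multidegree \((3,1)\) via the grading is also the right mechanism, and you correctly locate where the sign deformation \(q_{11}=q_{22}=-q^{2}\), \(q_{12}=q_{21}=q^{-2}\) actually enters.

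That said, as submitted this is a roadmap rather than a proof, and you concede as much at the two places that carry all the content. First, the orthogonality of \((\mathrm{ad}_l e_i)^{3}(e_j)=e_i^3e_j+(1-q^2-q^{-2})(e_i^2e_je_i+e_ie_je_i^2)+e_je_i^3\) against the four monomials \(f_{\sigma(1)}f_{\sigma(2)}f_{\sigma(3)}f_{\sigma(4)}\) of multidegree \((3,1)\) is never computed; since the coefficient \(1-q^2-q^{-2}\) replaces the classical \(-[3]_q=-(q^2+1+q^{-2})\) precisely because of the sign twist, this is not quotable from the one-parameter case and must be done. Second, your non-degeneracy argument asserts a triangular Gram matrix on PBW monomials, but in the affine setting the weight spaces \(\widehat{\mathcal{B}}_{n\delta}\) are spanned by products of imaginary root vectors and the Gram blocks there are genuinely non-diagonal (the pairings \(\langle\mathcal{F}_{k\delta},\mathcal{E}_{l\delta}\rangle\) form a nontrivial matrix whose invertibility needs its own computation); ``order the PBW monomials so that the Gram matrix becomes triangular'' is not available off the shelf for these blocks. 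Until those two computations are carried out, existence (well-definedness on the quotient) and non-degeneracy are both unproved, so the proposal should be regarded as an accurate outline of the known proof with its decisive steps still open.
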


\begin{proposition}[\cite{HZhuang}, Prop. 2.4]
The quantum affine algebra \(U_{\textbf{q}}(\widehat{\mathfrak{sl}}_2)\) is isomorphic to the Drinfeld quantum double \(\mathcal{D(\hat{B},\hat{B}^{\prime})}\).
\end{proposition}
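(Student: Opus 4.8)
The plan is to identify \(U_{\mathbf q}(\widehat{\mathfrak{sl}}_2)\) with the quantum double assembled from the two Borel-type subalgebras \(\widehat{\mathcal{B}}\) and \(\widehat{\mathcal{B}^{\prime}}\), glued along their common torus \(U^{0}\) generated by \(k_1^{\pm1},k_2^{\pm1}\), by means of the nondegenerate skew-Hopf pairing \(\langle\,,\,\rangle\) of Proposition~2.3. Recall that such a pairing endows the balanced tensor product \(\widehat{\mathcal{B}^{\prime}}\otimes_{U^{0}}\widehat{\mathcal{B}}\) with a Hopf-algebra structure in which \(\widehat{\mathcal{B}}\) and \(\widehat{\mathcal{B}^{\prime}}\) sit as sub-Hopf-algebras and the product of \(a\in\widehat{\mathcal{B}}\) with \(b\in\widehat{\mathcal{B}^{\prime}}\) is governed by the cross-relation
\[
\sum \langle b_{(1)},a_{(1)}\rangle\, b_{(2)}a_{(2)}
 =\sum \langle b_{(2)},a_{(2)}\rangle\, a_{(1)}b_{(1)},
\]
with \(\Delta\) taken from Proposition~2.2; this Hopf algebra is exactly \(\mathcal{D}(\widehat{\mathcal{B}},\widehat{\mathcal{B}^{\prime}})\).

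First I would introduce the candidate morphism \(\Phi:\mathcal{D}(\widehat{\mathcal{B}},\widehat{\mathcal{B}^{\prime}})\to U_{\mathbf q}(\widehat{\mathfrak{sl}}_2)\) given by multiplication, \(\Phi(b\otimes a)=ba\), so that \(\Phi\) restricts to the canonical inclusions on each tensor factor. Since \(\widehat{\mathcal{B}}\) and \(\widehat{\mathcal{B}^{\prime}}\) are honest subalgebras of \(U_{\mathbf q}(\widehat{\mathfrak{sl}}_2)\) sharing the same \(k_i^{\pm1}\), every relation internal to either factor—in particular the \(q\)-Serre relations (A4)—is automatically respected. The only point to check is that \(\Phi\) sends the double's cross-relation to an identity valid in \(U_{\mathbf q}(\widehat{\mathfrak{sl}}_2)\).

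Next I would verify this on the generating pairs. Evaluating the cross-relation on \((a,b)=(e_i,f_j)\), using \(\langle f_j,e_i\rangle=\delta_{ij}/(q^{-1}-q)\) and cancelling the common terms, yields
\[
e_if_j-f_je_i=\delta_{ij}\,\frac{k_i-k_i^{-1}}{q-q^{-1}},
\]
which is precisely relation (A3); the normalization of the pairing in Proposition~2.3 is tailored to produce this coefficient. Running the same computation on the mixed torus-root pairs \((e_j,k_i)\) and \((k_i,f_j)\) (one \(k_i\) drawn from each factor) and inserting \(\langle k_i^{\pm1},k_j^{\mp1}\rangle=q_{ij}\) with the explicit braiding \(\mathbf q=\bigl(\begin{smallmatrix}-q^2 & q^{-2}\\ q^{-2} & -q^2\end{smallmatrix}\bigr)\) gives \(k_ie_j=q_{ij}e_jk_i\) and \(k_if_j=q_{ji}^{-1}f_jk_i\), which reproduce the \(k\)-\(e\) and \(k\)-\(f\) scalings of (A2). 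Here the signs stemming from \(\epsilon_1=\epsilon_2=-1\) must be tracked carefully, since they are exactly what makes the torus-root relations dictated by the pairing agree with those internal to \(\widehat{\mathcal{B}}\) and \(\widehat{\mathcal{B}^{\prime}}\), confirming that the two copies of the torus are consistently identified. This shows \(\Phi\) is a well-defined algebra homomorphism, and it is surjective because its image contains all the generators \(e_i,f_i,k_i^{\pm1}\).

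Finally I would establish injectivity through the triangular decomposition. By construction \(\mathcal{D}(\widehat{\mathcal{B}},\widehat{\mathcal{B}^{\prime}})\) is, as a vector space, \(\widehat{\mathcal{B}^{\prime}}\otimes_{U^{0}}\widehat{\mathcal{B}}\), so concatenating PBW monomials of the two factors over \(U^{0}\) produces a basis of the double; on the target the triangular factorization \(U_{\mathbf q}(\widehat{\mathfrak{sl}}_2)=U^{-}U^{0}U^{+}\) and its PBW basis are available from \cite{FHZ,HZhuang}. The map \(\Phi\) carries the former basis onto the latter, whence it is bijective. I expect this last step to be the main obstacle: one must rule out any collapse of the image when the two tori are identified, and it is precisely the nondegeneracy of the skew-Hopf pairing of Proposition~2.3—guaranteeing that it restricts nondegenerately on \(U^{0}\) and that the cross-relations create no unforeseen linear dependence—together with the known size of the PBW basis of \(U_{\mathbf q}(\widehat{\mathfrak{sl}}_2)\) that forces \(\Phi\) to be an isomorphism.
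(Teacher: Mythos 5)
Your argument is correct and follows exactly the standard route that the cited source \cite{HZhuang} (and the analogous results in \cite{BW1,BGH,HRZ}) uses: form the (reduced) double from the skew-Hopf pairing of Proposition~2.3, check that the cross-relations reproduce (A2)--(A3), and invoke the triangular decomposition and nondegeneracy for bijectivity. The paper itself quotes this proposition without proof, so there is nothing further to compare; your sign bookkeeping with \(q_{ii}=-q^2\), \(q_{ij}=q^{-2}\) and the normalization \(\langle f_i,e_j\rangle=\delta_{ij}/(q^{-1}-q)\) checks out.
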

\subsection{Automorphisms of 
\(U_{\textbf{q}}(\widehat{\mathfrak{sl}}_2)\)}
The novel quantum affine algebra \(U_{\textbf{q}}(\widehat{\mathfrak{sl}}_2)\)
possesses automorphisms, which play a crucial role in deriving root vectors and understanding the structure of the algebra. In particular, we focus on automorphisms induced by the braid group action.

\begin{proposition}[\cite{HZhuang}, Def. 3.2]
For \(i,\;j\in\{1,\;2\}\), the mappings \(\mathcal{T}_i\)
defined by the following formulas are automorphisms of the algebra \(U_{\textbf{q}}(\widehat{\mathfrak{sl}}_2)\):
\begin{flalign*}
&\mathcal{T}_{i}(k_{i})=k^{-1}_{i},\qquad \mathcal{T}_{i}(k_{j})=k_{j}k^{-a_{ij}}_{i},\\
&\mathcal{T}_{i}(e_{i})=-f_{i}k_{i},\quad \mathcal{T}_{i}(f_{i})=-k^{-1}_{i}e_{i},\\
&\mathcal{T}_{i}(e_{j})=e^{(2)}_{i}e_{j}-q^{-2}e_{j}e^{(2)}_{i}+\frac{1-q^{-2}}{[2]_{q}}e_{i}e_{j}e_{i},\\
&\mathcal{T}_{i}(f_{j})=-q^{2}f^{(2)}_{i}f_{j}+f_{j}f^{(2)}_{i}+\frac{1-q^{2}}{[2]_{q}}f_{i}f_{j}f_{i},\qquad i\neq j.
\end{flalign*}
Here, we use the standard q-deformed notation:
\begin{flalign*}
e^{(n)}_{i}=\frac{e^{n}_{i}}{[n]_{q}!},\qquad [n]_{q}=\frac{q^{n}-q^{-n}}{q-q^{-1}},\qquad [n]_{q}!=[1]_{q}\cdots[n]_{q},\quad n\in\mathbb{N}.
\end{flalign*}
\end{proposition}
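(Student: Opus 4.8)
The plan is to prove the statement in two stages: first that each $\mathcal{T}_i$ extends to a well-defined algebra endomorphism of $U_{\mathbf q}(\widehat{\mathfrak{sl}}_2)$, and then that it is bijective. For the first stage, by the universal property of the algebra presented by generators and relations (Definition 2.1), it suffices to check that the prescribed images $\mathcal{T}_i(e_j)$, $\mathcal{T}_i(f_j)$, $\mathcal{T}_i(k_j^{\pm1})$ satisfy each of the defining relations (A1)--(A4); I would organize the verification relation by relation, treating the cases $i=j$ and $i\neq j$ separately.

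The torus and commutation relations (A1)--(A2) are the routine part. Since $\mathcal{T}_i(k_i)=k_i^{-1}$ and $\mathcal{T}_i(k_j)=k_jk_i^{-a_{ij}}$ are monomials in the mutually commuting $k$'s, relation (A1) is immediate; each identity in (A2) reduces, after substituting the images, to commuting the factors $k_i^{\pm1}$ past the $e$'s and $f$'s via (A2) itself, whereupon the sign factors $-q^{\pm2}$ cancel in pairs. The cross relations (A3) are checked directly: in the diagonal case one finds $[\,\mathcal{T}_i(e_i),\mathcal{T}_i(f_i)\,]=[\,-f_ik_i,-k_i^{-1}e_i\,]=f_ie_i-k_i^{-1}e_if_ik_i$, and inserting (A3) for $e_if_i$ and normal-ordering the $k_i$'s produces exactly $(k_i^{-1}-k_i)/(q-q^{-1})=(\mathcal{T}_i(k_i)-\mathcal{T}_i(k_i)^{-1})/(q-q^{-1})$, as required; the off-diagonal commutators are handled the same way and vanish.

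The genuine difficulty lies in the $q$-Serre relations (A4), and this is the main obstacle. Because $\mathcal{T}_i$ is not a coalgebra map, one cannot simply transport the adjoint-action form of (A4); instead one must verify the expanded cubic identities directly for the images. After substituting $\mathcal{T}_i(e_i)=-f_ik_i$ and the degree-three expression for $\mathcal{T}_i(e_j)$, the relation becomes a sum of monomials mixing $e_i,e_j,f_i,k_i$ that must be shown to collapse to zero. I would cut down the bookkeeping by first recording the auxiliary commutation rules between $-f_ik_i$ and $\mathcal{T}_i(e_j)$ (obtained from (A2)--(A3)), then normal-ordering every term and checking that the coefficients cancel identically; this is precisely where the deformed constants $1-q^2-q^{-2}$ and $(1-q^{-2})/[2]_q$ interact and where the sign deformation $\epsilon_1=\epsilon_2=-1$ must be tracked with care. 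The $f$-Serre relation is treated symmetrically, and the non-degenerate pairing of Proposition 2.3 can serve as an independent consistency check.

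For the second stage, having shown $\mathcal{T}_i$ is an endomorphism, I would exhibit a two-sided inverse. I would define a candidate $\mathcal{T}_i^{-1}$ on generators by the analogous ``mirror'' formulas, guided by the natural $e\leftrightarrow f$, $q\leftrightarrow q^{-1}$ symmetry, verify by the same method that it too preserves (A1)--(A4), and then check that $\mathcal{T}_i\circ\mathcal{T}_i^{-1}=\mathcal{T}_i^{-1}\circ\mathcal{T}_i=\mathrm{id}$ on the finite generating set $\{e_j,f_j,k_j^{\pm1}\}$ --- which suffices, since both composites are algebra endomorphisms agreeing with the identity on generators. On the torus part this is visible because $k_i\mapsto k_i^{-1}\mapsto k_i$ and the lattice map $k_j\mapsto k_jk_i^{-a_{ij}}$ is invertible, while on the $e_j,f_j$ it is a direct substitution.
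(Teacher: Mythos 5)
Your strategy is the standard and correct one, but note that the paper itself offers no proof of this proposition: it is recalled verbatim from \cite{HZhuang}, Def.~3.2, so there is nothing internal to compare against. Your two-stage plan (check that the prescribed images satisfy (A1)--(A4) via the universal property, then exhibit a two-sided inverse and verify it on generators) is exactly how such braid-operator statements are established, and your sample computations are correct --- e.g.\ $[-f_ik_i,-k_i^{-1}e_i]=f_ie_i-k_i^{-1}e_if_ik_i=f_ie_i-e_if_i=(k_i^{-1}-k_i)/(q-q^{-1})$ does come out right under the sign-deformed relations (A2). You also correctly identify the two genuinely delicate points: the $q$-Serre relations must be checked in their expanded cubic form (since $\mathcal{T}_i$ is not a coalgebra map, the $\mathrm{ad}$-formulation cannot be transported), and the signs coming from $k_ie_i=-q^2e_ik_i$ must be tracked through the normal-ordering. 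The only thing separating your proposal from a complete proof is that this Serre-relation computation is described rather than carried out; as a plan it is sound, and the details are precisely what \cite{HZhuang} supplies.
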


\begin{proposition}
There exists a (Hopf) \(\mathbb{C}[q]\)-automorphism \(\Phi\) of \(U_{\textbf{q}}(\widehat{\mathfrak{sl}}_2)\), which interchanges the indices 1 and 2:
\[\Phi(e_i)=e_j,\quad \Phi(f_i)=f_j,\quad \Phi(k_i)=k_j,\quad i\neq j,\]
and satisfy \(\Phi^2=id,\,\mathcal{T}_1\Phi=\Phi \mathcal{T}_2.\)
\end{proposition}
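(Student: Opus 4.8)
The plan is to use the presentation of \(U_{\mathbf q}(\widehat{\mathfrak{sl}}_2)\) by generators and relations (Definition 2.1): since \(\Phi\) is prescribed on the generators \(e_i,f_i,k_i^{\pm1}\) by the interchange \(1\leftrightarrow2\), it will extend to a (unique) algebra endomorphism as soon as one checks that each defining relation (A1)--(A4) is sent to a valid relation. The decisive point is that the data governing these relations is invariant under the swap in the present case \(\epsilon_1=\epsilon_2=-1\), \(t=1\): the structure constant matrix is symmetric, with \(q_{11}=q_{22}=-q^2\) and \(q_{12}=q_{21}=q^{-2}\), and the Cartan integers satisfy \(a_{12}=a_{21}=-2\). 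Thus (A1) is manifestly symmetric; in (A2) the map \(\Phi\) carries \(k_ie_i=-q^2e_ik_i\) and \(k_ie_j=q^{-2}e_jk_i\ (i\ne j)\) to the same relations with the indices exchanged, and likewise for the \(f\)-relations; in (A3) the Kronecker delta \(\delta_{ij}\) is invariant under the index swap and the right-hand side \(\tfrac{k_i-k_i^{-1}}{q-q^{-1}}\) has the same shape at both nodes, so the cross relations are permuted among themselves; and in (A4) the coefficient \(1-q^2-q^{-2}\) carries no index dependence, so the two \(q\)-Serre relations are simply interchanged. Hence \(\Phi\) is a well-defined algebra endomorphism.

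Next I would check \(\Phi^2=\mathrm{id}\). On each generator \(\Phi^2\) returns it unchanged, so \(\Phi^2\) is an endomorphism agreeing with the identity on a generating set and therefore equals \(\mathrm{id}\); in particular \(\Phi\) is invertible, hence an automorphism, and it is \(\mathbb{C}[q]\)-linear because it fixes scalars. To see that \(\Phi\) is a Hopf automorphism I would verify that it intertwines the coalgebra and antipode maps of Proposition 2.2 on generators, e.g. \(\Delta(\Phi(e_i))=e_j\otimes1+k_j\otimes e_j=(\Phi\otimes\Phi)\Delta(e_i)\), and similarly for \(f_i\), \(k_i^{\pm1}\), with the immediate identities for \(\varepsilon\) and \(S\); all of these hold because every structural map in Proposition 2.2 is written index-symmetrically.

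The substantive step is the braid compatibility \(\mathcal{T}_1\Phi=\Phi\mathcal{T}_2\). As both sides are algebra homomorphisms, it suffices to compare them on the six generators. On the group-likes this reduces to \(-a_{12}=-a_{21}=2\): for instance \(\mathcal{T}_1\Phi(k_1)=\mathcal{T}_1(k_2)=k_2k_1^{2}\) while \(\Phi\mathcal{T}_2(k_1)=\Phi(k_1k_2^{2})=k_2k_1^{2}\). On \(e_2\) and \(f_2\), where \(\mathcal{T}_2\) acts by the short rules \(e_2\mapsto-f_2k_2\) and \(f_2\mapsto-k_2^{-1}e_2\), both sides agree after the swap. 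The only computational comparisons are on \(e_1\) and \(f_1\): here \(\mathcal{T}_1\Phi(e_1)=\mathcal{T}_1(e_2)\) is the divided-power expression
\[
\mathcal{T}_1(e_2)=e_1^{(2)}e_2-q^{-2}e_2e_1^{(2)}+\tfrac{1-q^{-2}}{[2]_q}e_1e_2e_1,
\]
and one recognises that \(\Phi\mathcal{T}_2(e_1)=\Phi\!\big(e_2^{(2)}e_1-q^{-2}e_1e_2^{(2)}+\tfrac{1-q^{-2}}{[2]_q}e_2e_1e_2\big)\) reproduces it term by term; the analogous identity holds for \(f_1\).

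The main obstacle, such as it is, lies in this last step: one must keep careful track of which index plays the role of the reflecting node and which the adjacent node in the formulas of Proposition 2.5, so that the defining expression for \(\mathcal{T}_2(e_1)\) is correctly matched, after applying \(\Phi\), with that for \(\mathcal{T}_1(e_2)\). Once this bookkeeping is in place the claim follows from the fact that the rules defining \(\mathcal{T}_i\) are covariant under the relabelling \(1\leftrightarrow2\); no nontrivial identity in \(U_{\mathbf q}(\widehat{\mathfrak{sl}}_2)\) is actually needed.
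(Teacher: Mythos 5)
Your proposal is correct, and it is the standard argument this statement calls for: the paper itself states Proposition 2.6 without proof, treating it as routine, and your verification (index-symmetry of (A1)--(A4) and of the structure constants \(q_{11}=q_{22}=-q^{2}\), \(q_{12}=q_{21}=q^{-2}\) for well-definedness, generator-by-generator checks for \(\Phi^2=\mathrm{id}\), the Hopf compatibility, and \(\mathcal{T}_1\Phi=\Phi\mathcal{T}_2\)) supplies exactly the omitted details. The only comparison worth flagging is that your computation \(\mathcal{T}_1\Phi(k_1)=k_2k_1^{2}=\Phi\mathcal{T}_2(k_1)\) and the term-by-term match of \(\mathcal{T}_1(e_2)\) with \(\Phi(\mathcal{T}_2(e_1))\) are precisely the points where the symmetry \(a_{12}=a_{21}\) of the type \(A_1^{(1)}\) Cartan matrix is used, which is consistent with how the paper implicitly relies on this proposition later (e.g.\ in writing \(\mathcal{E}_{n\delta+\alpha_1}=(\mathcal{T}_1\Phi)^n(e_1)\)).
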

\begin{proposition}
There exists a \(\mathbb{C}\)-antiautomorphism \(\Omega\) of \(U_{\textbf{q}}(\widehat{\mathfrak{sl}}_2)\) defined by 
\[\Omega(e_i)=f_i,\quad \Omega(f_i)=e_i,\quad \Omega(k_i)=k^{-1}_i,\quad \Omega(q)=q^{-1},\quad i=1,2.\]
Moreover, \(\Omega^2=id,\,\Omega \mathcal{T}_i=\mathcal{T}_i\Omega\) and \(\Phi\Omega=\Omega\Phi.\)
\end{proposition}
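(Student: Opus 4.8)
The plan is to construct $\Omega$ first on the free $\mathbb{C}$-algebra on the generators $e_i, f_i, k_i^{\pm 1}$, declaring it to be the $\mathbb{C}$-linear antihomomorphism that acts on scalars by the field involution $q \mapsto q^{-1}$ of $\mathbb{C}(q)$ and on generators by the stated rules, and then to show it descends to $U_{\mathbf q}(\widehat{\mathfrak{sl}}_2)$ by verifying that it carries every defining relation (A1)--(A4) to a consequence of those relations. Since $\Omega$ reverses products and inverts $q$, the decisive feature is an $e \leftrightarrow f$ duality: I expect the relation obtained by applying $\Omega$ to a relation among the $e_i$'s to be exactly the corresponding relation among the $f_i$'s, and conversely.

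Concretely, for (A1) and (A2) one applies $\Omega$ to each monomial relation and uses $\Omega(k_i) = k_i^{-1}$ together with order reversal; e.g. $k_i e_i = -q^2 e_i k_i$ maps to $f_i k_i^{-1} = -q^{-2} k_i^{-1} f_i$, which is itself a consequence of $k_i f_i = -q^{-2} f_i k_i$ after conjugating by $k_i^{-1}$. For (A3), applying $\Omega$ to $[e_i,f_j] = \delta_{ij}(k_i-k_i^{-1})/(q-q^{-1})$ yields $[e_j,f_i] = \delta_{ij}(k_i-k_i^{-1})/(q-q^{-1})$; since $\Omega$ inverts both the numerator $k_i - k_i^{-1}$ and the denominator $q - q^{-1}$, the two sign changes cancel, and the result agrees with the relation indexed by $(j,i)$. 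For the $q$-Serre relations (A4), applying $\Omega$ to the $e$-relation and reversing the order of the four terms produces precisely the $f$-relation (the scalar $1 - q^2 - q^{-2}$ being fixed by $q \mapsto q^{-1}$), and symmetrically. Hence $\Omega$ is a well-defined $\mathbb{C}$-antihomomorphism. Checking $\Omega^2$ on generators gives the identity on $e_i, f_i, k_i$ and on $q$; as $\Omega^2$ is an honest homomorphism this forces $\Omega^2 = \mathrm{id}$, which simultaneously shows $\Omega$ is bijective, hence an antiautomorphism.

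It remains to verify the two commutation identities, for which both sides are (anti)automorphisms and it suffices to compare their values on the generators. The relation $\Phi\Omega = \Omega\Phi$ is immediate: on each of $e_i, f_i, k_i, q$ both composites are read off directly, using that $\Phi$ fixes $q$ and swaps the two indices. The identity $\Omega\mathcal{T}_i = \mathcal{T}_i\Omega$ is the main obstacle and the only genuinely computational point. On $k_i, k_j, e_i, f_i$ the check is short; the substance is the comparison on $e_j$ and $f_j$ with $j \neq i$, where the explicit braid formulas enter. Writing $e_i^{(2)} = e_i^2/[2]_q$ and $f_i^{(2)} = f_i^2/[2]_q$, applying $\Omega$ to $\mathcal{T}_i(e_j)$ (reversing order and sending $q \mapsto q^{-1}$) should reproduce term-by-term the given expression for $\mathcal{T}_i(f_j) = \mathcal{T}_i(\Omega(e_j))$, and likewise $\Omega(\mathcal{T}_i(f_j))$ should match $\mathcal{T}_i(e_j)$; the point is that the two defining formulas for $\mathcal{T}_i(e_j)$ and $\mathcal{T}_i(f_j)$ are already $\Omega$-dual to one another, so the coefficients $-q^{-2}, 1, (1-q^{-2})/[2]_q$ and $-q^2, 1, (1-q^2)/[2]_q$ transform into each other exactly as required. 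I anticipate no deeper difficulty than organizing this coefficient bookkeeping carefully.
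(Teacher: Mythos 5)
Your proposal is correct: all the verifications go through as you describe (the $e\leftrightarrow f$ duality of (A1)--(A4) under order reversal and $q\mapsto q^{-1}$, the cancellation of the two sign changes in (A3), the $\Omega$-duality of the two braid-operator formulas on $e_j$ and $f_j$, and the reduction of the commutation identities to a check on generators since all composites involved are antihomomorphisms). The paper itself gives no proof of this proposition --- it is recalled from the cited references \cite{FHZ, HZhuang} --- so there is nothing to contrast with; your argument is the standard one that any written-out proof would follow.
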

\subsection{Definition of Root Vectors and Commutation Rules}
Root vectors are essential for understanding the structure of quantum affine algebras, as they correspond to the roots of the underlying affine Kac-Moody algebra and generate the root spaces of the quantum affine algebra. The root system of 
\(A^{(1)}_1\) consists of real roots (of the form \(n\delta+\alpha_i\) for \(n\in\mathbb{Z}_{\ge 0}\) and \(i=1,2\), where 
\(\alpha_1,\alpha_2\) are the simple roots and \(\delta\) is the null root) and imaginary roots (of the form \(n\delta\) for 
\(n\in\mathbb{Z}\setminus\{0\}\)).

We first define the real root vectors using the automorphisms \(\mathcal{T}_{i}\) introduced in Proposition 2.4, which generalize the Weyl group action on the root system:
\begin{align*}
&\mathcal{E}_{n\delta+\alpha_{1}}=\underbrace{\mathcal{T}_{1}\mathcal{T}_{2}\cdots \mathcal{T}_{i}}_{n}(e_{j})=(\mathcal{T}_1\Phi)^n(e_1),\quad(i\neq j),\\
&\mathcal{E}_{n\delta+\alpha_{2}}=\underbrace{\mathcal{T}^{-1}_{2}\mathcal{T}^{-1}_{1}\cdots \mathcal{T}^{-1}_{i}}_{n}(e_{j})=(\mathcal{T}^{-1}_2\Phi)^n(e_2),\quad(i\neq j),\\
&\mathcal{F}_{n\delta+\alpha_{1}}=\Omega(\mathcal{E}_{n\delta+\alpha_{1}}),\qquad\mathcal{F}_{n\delta+\alpha_{2}}=\Omega(\mathcal{E}_{n\delta+\alpha_{2}}).
\end{align*}
Next, we define the imaginary root vectors \(\mathcal{E}_{n\delta}\,(n \in \mathbb{N})\) using the adjoint action of \(e_1\) on the real root vectors, as proposed in \cite{HZhuang}:
\[\mathcal{E}_{n\delta}={ad}_l e_{1}(\mathcal{E}_{(n-1)\delta+\alpha_{2}})=e_{1}\mathcal{E}_{(n-1)\delta+\alpha_{2}}+{(-1)}^{n}q^{-2}\mathcal{E}_{(n-1)\delta+\alpha_{2}}e_{1}.\]

Accordingly, we have
\[\mathcal{F}_{n\delta}={ad}_r f_{1}(\mathcal{F}_{(n-1)\delta+\alpha_{2}})=\mathcal{F}_{(n-1)\delta+\alpha_{2}}f_{1}+{(-1)}^{n}q^{2}f_{1}\mathcal{F}_{(n-1)\delta+\alpha_{2}}.\]
\begin{theorem} [\cite{HZhuang}, Thm. 4.1]
\(\mathcal{T}_1\mathcal{T}_2(\mathcal{E}_{\delta})=\mathcal{E}_{\delta}\) and for any \(n\in\mathbb{Z}_{\ge 0}\), we have
\begin{align*}
&\mathcal{E}_{\delta}\underbrace{\mathcal{T}_{1}\mathcal{T}_{2}\cdots \mathcal{T}_{i}}_{n}(\mathcal{E}_{j})+\underbrace{\mathcal{T}_{1}T_{2}\cdots \mathcal{T}_{i}}_{n}(\mathcal{E}_{j})\mathcal{E}_{\delta}=[2]_q\underbrace{\mathcal{T}_{1}\mathcal{T}_{2}\cdots \mathcal{T}_{i}}_{n+1}(\mathcal{E}_{j}),\\
&\mathcal{E}_{\delta}\underbrace{\mathcal{T}^{-1}_{2}\mathcal{T}^{-1}_{1}\cdots \mathcal{T}^{-1}_{i}}_{n}(\mathcal{E}_{j})+\underbrace{\mathcal{T}^{-1}_{2}\mathcal{T}^{-1}_{1}\cdots \mathcal{T}^{-1}_{i}}_{n}(\mathcal{E}_{j})\mathcal{E}_{\delta}=[2]_q\underbrace{\mathcal{T}^{-1}_{2}\mathcal{T}^{-1}_{1}\cdots \mathcal{T}^{-1}_{i}}_{n+1}(\mathcal{E}_{j}),
\end{align*}
where $i\neq j$.
\end{theorem}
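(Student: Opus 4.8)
The plan is to prove the two assertions in turn: first the $\mathcal{T}_1\mathcal{T}_2$-invariance of \(\mathcal{E}_\delta\), which I will then use as the engine for an induction establishing the two anticommutator recursions. Throughout I will work from the explicit form \(\mathcal{E}_\delta = ad_l\, e_1(e_2) = e_1e_2 - q^{-2}e_2e_1\), obtained from the \(n=1\) instance of the definition of the imaginary root vectors.

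To prove \(\mathcal{T}_1\mathcal{T}_2(\mathcal{E}_\delta)=\mathcal{E}_\delta\), I would use that \(\mathcal{T}_1,\mathcal{T}_2\) are algebra automorphisms (Prop.~2.4), so that \(\mathcal{T}_1\mathcal{T}_2(\mathcal{E}_\delta)=\mathcal{T}_1\mathcal{T}_2(e_1)\,\mathcal{T}_1\mathcal{T}_2(e_2)-q^{-2}\,\mathcal{T}_1\mathcal{T}_2(e_2)\,\mathcal{T}_1\mathcal{T}_2(e_1)\). It then suffices to compute \(\mathcal{T}_1\mathcal{T}_2(e_1)\) and \(\mathcal{T}_1\mathcal{T}_2(e_2)\) from the cubic formulas in Prop.~2.4, substitute, and collapse the result back to \(\mathcal{E}_\delta\) using the sign-twisted commutation relations (A2) and the \(q\)-Serre relations (A4). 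I expect this direct computation, together with one base case below, to be the main obstacle: the twisting signs \(-q^{\pm2}\) and the factors \((-1)^n\) peculiar to this novel algebra make the cancellations more delicate than in the one-parameter case, and the Serre relation (A4) is precisely what is needed to absorb the surplus cubic terms.

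For the recursions I would reformulate everything through the single automorphism \(\sigma:=\mathcal{T}_1\Phi\). From \(\mathcal{T}_1\Phi=\Phi\mathcal{T}_2\) and \(\Phi^2=\mathrm{id}\) (Props.~2.5--2.6) one gets \(\Phi\mathcal{T}_1\Phi=\mathcal{T}_2\), hence \(\sigma^2=\mathcal{T}_1\Phi\mathcal{T}_1\Phi=\mathcal{T}_1\mathcal{T}_2\), and the definition of the real root vectors already records \(\mathcal{E}_{n\delta+\alpha_1}=\sigma^n(e_1)\). The first formula thus reads \(\mathcal{E}_\delta\,\sigma^n(e_1)+\sigma^n(e_1)\,\mathcal{E}_\delta=[2]_q\,\sigma^{n+1}(e_1)\). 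Since \(\sigma^2=\mathcal{T}_1\mathcal{T}_2\) is an automorphism fixing \(\mathcal{E}_\delta\) (the first part), applying \(\sigma^2\) to the relation at level \(n\) gives exactly the relation at level \(n+2\), because \(\sigma^2(\mathcal{E}_{m\delta+\alpha_1})=\mathcal{E}_{(m+2)\delta+\alpha_1}\). It therefore remains only to check the two base cases \(n=0\) and \(n=1\). The case \(n=0\) is essentially automatic: expanding \(\mathcal{E}_\delta e_1+e_1\mathcal{E}_\delta\) reproduces precisely \([2]_q\,\mathcal{T}_1(e_2)=[2]_q\,\mathcal{E}_{\delta+\alpha_1}\) by the very formula defining \(\mathcal{T}_1(e_2)\). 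The case \(n=1\), namely \(\mathcal{E}_\delta\,\mathcal{T}_1(e_2)+\mathcal{T}_1(e_2)\,\mathcal{E}_\delta=[2]_q\,\mathcal{T}_1\mathcal{T}_2(e_1)\), is the one genuine computation here, carried out again by substitution and Serre-relation simplification.

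Finally, the second formula follows by the symmetric induction built on \(\tau:=\mathcal{T}_2^{-1}\Phi\), for which the same manipulation of the braid identities gives \(\tau^2=\mathcal{T}_2^{-1}\mathcal{T}_1^{-1}=(\mathcal{T}_1\mathcal{T}_2)^{-1}\); since \(\sigma^2\) fixes \(\mathcal{E}_\delta\) so does its inverse \(\tau^2\), and the step-two induction proceeds identically after verifying its own \(n=0\) and \(n=1\) base cases. Alternatively one may transport the first formula directly, applying the antiautomorphism \(\Omega\) and the diagram automorphism \(\Phi\) and invoking \(\Omega\mathcal{T}_i=\mathcal{T}_i\Omega\) and \(\Phi\Omega=\Omega\Phi\) to convert the \(\mathcal{F}\)- and index-swapped statements into the claimed one. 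In all cases the argument is reduced to the finite computations in the invariance claim and in the \(n=1\) base case, which is where I anticipate the real work to lie.
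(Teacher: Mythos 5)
The paper does not actually prove this statement: it is imported verbatim from \cite{HZhuang} (Thm.~4.1) and used as a black box, so there is no in-paper argument to compare against. Your proposal is therefore supplying a proof where the paper supplies a citation, and its architecture is sound. The key reductions all check out: from \(\mathcal{T}_1\Phi=\Phi\mathcal{T}_2\) and \(\Phi^2=\mathrm{id}\) one indeed gets \(\sigma^2=\mathcal{T}_1\Phi\mathcal{T}_1\Phi=\mathcal{T}_1(\Phi\mathcal{T}_1\Phi)=\mathcal{T}_1\mathcal{T}_2\), so applying the automorphism \(\sigma^2\) to the level-\(n\) identity and using \(\mathcal{T}_1\mathcal{T}_2(\mathcal{E}_\delta)=\mathcal{E}_\delta\) does yield the level-\((n{+}2)\) identity, and your \(n=0\) base case is genuinely automatic: with \(\mathcal{E}_\delta=e_1e_2-q^{-2}e_2e_1\) one has \(\mathcal{E}_\delta e_1+e_1\mathcal{E}_\delta=e_1^2e_2+(1-q^{-2})e_1e_2e_1-q^{-2}e_2e_1^2=[2]_q\mathcal{T}_1(e_2)\) directly from Prop.~2.4. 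The one caveat is that you have deferred precisely the two computations that carry all the content: the invariance \(\mathcal{T}_1\mathcal{T}_2(\mathcal{E}_\delta)=\mathcal{E}_\delta\) and the \(n=1\) base case. The former is not a routine substitution — \(\mathcal{T}_1\mathcal{T}_2(e_2)=-\mathcal{T}_1(f_2)\,k_1^2k_2\) drags the \(f\)- and \(k\)-generators into the expansion, so the cancellation needs (A2), (A3) \emph{and} (A4), not just the Serre relations — and in the classical \(U_q(\widehat{\mathfrak{sl}}_2)\) setting this is a known nontrivial lemma (Damiani/Beck), here further complicated by the \((-1)^n\) sign twists. Likewise your alternative derivation of the second chain via \(\Omega\) needs care: \(\Omega(\mathcal{E}_\delta)=\mathcal{F}_\delta\), so \(\Omega\) transports the first chain to a statement about the \(\mathcal{F}\)'s rather than to the second \(\mathcal{E}\)-chain, and the composite \(\Omega\Phi\)-transport you allude to must be written out; the direct induction on \(\tau=\mathcal{T}_2^{-1}\Phi\) (whose \(n=0\) base case requires an explicit formula for \(\mathcal{T}_2^{-1}(e_1)\) that the paper never records) is the safer of your two options. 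In short: the reduction to finitely many identities is correct and cleanly organized, but the proof is not complete until those identities — above all the \(\mathcal{T}_1\mathcal{T}_2\)-invariance — are actually verified.
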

\begin{coro} [\cite{HZhuang}, Coro. 4.10]
For any $n,m\in\mathbb{Z}_{\ge 0}$, we have
$$
\mathcal{E}_{(n+m+1)\delta}=\mathcal{E}_{n\delta+\alpha_{1}}\mathcal{E}_{m\delta+\alpha_{2}}+{(-1)}^{n+m+1}q^{-2}\mathcal{E}_{m\delta+\alpha_{2}}\mathcal{E}_{n\delta+\alpha_{1}}.
$$
\end{coro}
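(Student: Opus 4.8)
The plan is to argue by induction on $n$, with $m\ge0$ arbitrary. The base case $n=0$ requires no work: since $\mathcal{E}_{0\cdot\delta+\alpha_{1}}=(\mathcal{T}_1\Phi)^0(e_1)=e_1$, the claimed identity becomes $\mathcal{E}_{(m+1)\delta}=e_1\mathcal{E}_{m\delta+\alpha_2}+(-1)^{m+1}q^{-2}\mathcal{E}_{m\delta+\alpha_2}e_1$, which is exactly the defining formula $\mathcal{E}_{(m+1)\delta}=\mathrm{ad}_l\,e_1(\mathcal{E}_{m\delta+\alpha_2})$ for the imaginary root vector. So I assume the identity for a fixed $n$ and all $m\ge0$, and prove it for $n+1$; put $N=n+m+2$.

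For the inductive step I invoke the $\mathcal{E}_\delta$-recursion of Theorem 2.8 for both families of real root vectors, namely $[2]_q\,\mathcal{E}_{(n+1)\delta+\alpha_1}=\mathcal{E}_\delta\mathcal{E}_{n\delta+\alpha_1}+\mathcal{E}_{n\delta+\alpha_1}\mathcal{E}_\delta$ and $[2]_q\,\mathcal{E}_{(m+1)\delta+\alpha_2}=\mathcal{E}_\delta\mathcal{E}_{m\delta+\alpha_2}+\mathcal{E}_{m\delta+\alpha_2}\mathcal{E}_\delta$. Inserting the first into the target expression $\mathcal{E}_{(n+1)\delta+\alpha_1}\mathcal{E}_{m\delta+\alpha_2}+(-1)^{N}q^{-2}\mathcal{E}_{m\delta+\alpha_2}\mathcal{E}_{(n+1)\delta+\alpha_1}$ and, on the other side, rewriting $\mathcal{E}_{N\delta}=\mathcal{E}_{(n+(m+1)+1)\delta}$ through the inductive hypothesis at level $(n,m+1)$ and then inserting the second recursion, I subtract the two. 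Four of the eight resulting monomials cancel in pairs, and regrouping the remaining four yields a single commutator:
\[
[2]_q\bigl(\mathcal{E}_{(n+1)\delta+\alpha_1}\mathcal{E}_{m\delta+\alpha_2}+(-1)^{N}q^{-2}\mathcal{E}_{m\delta+\alpha_2}\mathcal{E}_{(n+1)\delta+\alpha_1}-\mathcal{E}_{N\delta}\bigr)=\bigl[\mathcal{E}_\delta,\,\mathcal{E}_{n\delta+\alpha_1}\mathcal{E}_{m\delta+\alpha_2}+(-1)^{n+m+1}q^{-2}\mathcal{E}_{m\delta+\alpha_2}\mathcal{E}_{n\delta+\alpha_1}\bigr].
\]
By the inductive hypothesis at level $(n,m)$ the second entry of this bracket is exactly $\mathcal{E}_{(n+m+1)\delta}$, so the right-hand side equals $[\mathcal{E}_\delta,\mathcal{E}_{(n+m+1)\delta}]$.

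Consequently the induction closes as soon as $\mathcal{E}_\delta$ commutes with $\mathcal{E}_{(n+m+1)\delta}$, and I expect this to be the main obstacle. The commutativity of the imaginary root vectors, $[\mathcal{E}_\delta,\mathcal{E}_{k\delta}]=0$ for all $k$, does not follow from Theorem 2.8 alone, since that theorem only ever produces the anticommutator $\mathcal{E}_\delta\mathcal{E}_{n\delta+\alpha_i}+\mathcal{E}_{n\delta+\alpha_i}\mathcal{E}_\delta$ and never a bare commutator. I would therefore establish $[\mathcal{E}_\delta,\mathcal{E}_{k\delta}]=0$ beforehand, as a separate lemma proved by induction on $k$: starting from $\mathcal{E}_{k\delta}=\mathrm{ad}_l\,e_1(\mathcal{E}_{(k-1)\delta+\alpha_2})$, I would push $\mathcal{E}_\delta$ past $e_1$ and past $\mathcal{E}_{(k-1)\delta+\alpha_2}$ using the recursion of Theorem 2.8 together with the cross relations (A2)--(A3), reducing $[\mathcal{E}_\delta,\mathcal{E}_{k\delta}]$ to an expression in lower imaginary root vectors that vanishes by the inductive hypothesis. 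With this commutativity in hand the displayed difference is zero, both products compute $\mathcal{E}_{N\delta}$, and the induction advances from $n$ to $n+1$, completing the proof.
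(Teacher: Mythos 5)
Your reduction of the inductive step to the single identity $[\mathcal{E}_\delta,\mathcal{E}_{(n+m+1)\delta}]=0$ is correct: the base case $n=0$ is indeed the defining formula for the imaginary root vectors, and the bookkeeping with the two instances of the recursion from Theorem 2.8 does leave exactly the commutator you display. The problem is the lemma you defer to the end. The commutativity $[\mathcal{E}_\delta,\mathcal{E}_{k\delta}]=0$ is not a side condition that can be disposed of by the manipulation you sketch; it is \emph{equivalent} to the corollary itself, and the paper records precisely this equivalence as Proposition 2.10 (conditions (1) and (2)). Concretely, if you carry out the computation you propose --- write $\mathcal{E}_{k\delta}=e_1\mathcal{E}_{(k-1)\delta+\alpha_2}+(-1)^kq^{-2}\mathcal{E}_{(k-1)\delta+\alpha_2}e_1$ and push $\mathcal{E}_\delta$ through both factors using $\mathcal{E}_\delta e_1+e_1\mathcal{E}_\delta=[2]_q\mathcal{E}_{\delta+\alpha_1}$ and $\mathcal{E}_\delta\mathcal{E}_{(k-1)\delta+\alpha_2}+\mathcal{E}_{(k-1)\delta+\alpha_2}\mathcal{E}_\delta=[2]_q\mathcal{E}_{k\delta+\alpha_2}$ --- you obtain
\[
[\mathcal{E}_\delta,\mathcal{E}_{k\delta}]=[2]_q\Bigl(\mathcal{E}_{\delta+\alpha_1}\mathcal{E}_{(k-1)\delta+\alpha_2}+(-1)^{k+1}q^{-2}\mathcal{E}_{(k-1)\delta+\alpha_2}\mathcal{E}_{\delta+\alpha_1}-\mathcal{E}_{(k+1)\delta}\Bigr),
\]
which contains no lower imaginary root vectors at all: it is $[2]_q$ times the deviation of the corollary at level $(n,m)=(1,k-1)$. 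So your auxiliary induction on $k$ needs the case $(1,k-1)$ of the statement being proved, while your main induction needs $[\mathcal{E}_\delta,\mathcal{E}_{k\delta}]=0$ to reach level $(1,k-1)$; the circle already closes at $k=2$, where the corollary at $(1,1)$ requires $[\mathcal{E}_\delta,\mathcal{E}_{2\delta}]=0$ and vice versa.

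To break the circle one needs independent input that your sketch never touches: either a direct verification of $[\mathcal{E}_\delta,\mathcal{E}_{2\delta}]=0$ and, more generally, of one of the other equivalent conditions of Proposition 2.10 (such as the braid-invariance $\mathcal{T}_1\mathcal{T}_2(\mathcal{E}_{n\delta})=\mathcal{E}_{n\delta}$) by expanding in the generators and invoking the $q$-Serre relations (A4), or some other structural argument such as the PBW/pairing machinery of \cite{HZhuang}. Note also that the present paper does not prove this corollary; it imports it from \cite{HZhuang} (Coro.\ 4.10) and separately imports $[\mathcal{E}_{m\delta},\mathcal{E}_{n\delta}]=0$ as Lemma 3.9, so there is no in-paper argument to compare against --- but your proposal, as written, reduces the statement to an equivalent statement rather than proving it.
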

\begin{proposition}[\cite{HZhuang}, Prop. 4.3]
The following statements are equivalent

$(1)\hspace{2mm}[\mathcal{E}_{\delta}, \mathcal{E}_{n\delta}]=0,\,\forall \,n\in\mathbb{Z}_+$.

$(2)\hspace{2mm}\mathcal{E}_{(n+m+1)\delta}=\mathcal{E}_{n\delta+\alpha_{1}}\mathcal{E}_{m\delta+\alpha_{2}}+{(-1)}^{n+m+1}q^{-2}\mathcal{E}_{m\delta+\alpha_{2}}\mathcal{E}_{n\delta+\alpha_{1}},\,\forall \,n,m\in\mathbb{Z}_+$.

$(3)\hspace{2mm} \mathcal{T}_1\mathcal{T}_2(\mathcal{E}_{n\delta})=\mathcal{E}_{n\delta},\,\forall \,n\in\mathbb{Z}_+$.
\end{proposition}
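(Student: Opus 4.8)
The plan is to establish the equivalence as a cycle $(1)\Rightarrow(2)\Rightarrow(3)\Rightarrow(1)$, all three implications being driven by one ``transfer identity'' together with the equivariance of the braid-type automorphism $\mathcal{T}_1\Phi$. Throughout I would abbreviate, for $n,m\in\mathbb{Z}_{\ge 0}$,
\[
X_{n,m}:=\mathcal{E}_{n\delta+\alpha_{1}}\mathcal{E}_{m\delta+\alpha_{2}}+(-1)^{n+m+1}q^{-2}\mathcal{E}_{m\delta+\alpha_{2}}\mathcal{E}_{n\delta+\alpha_{1}},
\]
so that the definition of the imaginary root vectors via $\mathrm{ad}_l e_{1}$ reads $X_{0,m}=\mathcal{E}_{(m+1)\delta}$, and statement $(2)$ is exactly the assertion that $X_{n,m}=\mathcal{E}_{(n+m+1)\delta}$ for all $n,m$. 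The engine is the identity
\[
[2]_q\,(X_{n+1,m}-X_{n,m+1})=[\,\mathcal{E}_{\delta},\,X_{n,m}\,],
\]
which I would obtain by substituting the two anticommutator recursions $\mathcal{E}_\delta \mathcal{E}_{n\delta+\alpha_i}+\mathcal{E}_{n\delta+\alpha_i}\mathcal{E}_\delta=[2]_q\mathcal{E}_{(n+1)\delta+\alpha_i}$ of Theorem 2.8 into $X_{n+1,m}$ and $X_{n,m+1}$ and cancelling the four mixed triple products; the only delicate point is the sign bookkeeping coming from the factor $(-1)^{n+m+1}$.

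Given this identity, $(2)\Rightarrow(1)$ is immediate: if every $X_{n,m}$ equals $\mathcal{E}_{(n+m+1)\delta}$ then the left side vanishes, forcing $[\mathcal{E}_\delta,\mathcal{E}_{(n+m+1)\delta}]=0$, and letting $n+m+1$ run over all positive integers gives $(1)$. Conversely $(1)\Rightarrow(2)$ follows by induction on $n$: the base case $n=0$ is the definition $X_{0,m}=\mathcal{E}_{(m+1)\delta}$, and in the inductive step the transfer identity together with $[\mathcal{E}_\delta,\mathcal{E}_{(n+m+1)\delta}]=0$ yields $X_{n+1,m}=X_{n,m+1}=\mathcal{E}_{(n+m+2)\delta}$. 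For $(2)\Rightarrow(3)$ I would use the relations $\mathcal{T}_1\Phi=\Phi\mathcal{T}_2$ and $\Phi^2=\mathrm{id}$ of Proposition 2.6 to record the equivariances $\mathcal{T}_1\Phi(\mathcal{E}_{n\delta+\alpha_1})=\mathcal{E}_{(n+1)\delta+\alpha_1}$ and $\mathcal{T}_1\Phi(\mathcal{E}_{m\delta+\alpha_2})=\mathcal{E}_{(m-1)\delta+\alpha_2}$, so that $\mathcal{T}_1\Phi(X_{n,m})=X_{n+1,m-1}$ and $(\mathcal{T}_1\Phi)^2=\mathcal{T}_1\mathcal{T}_2$; then under $(2)$ one gets $\mathcal{T}_1\Phi(\mathcal{E}_{M\delta})=\mathcal{T}_1\Phi(X_{0,M-1})=X_{1,M-2}=\mathcal{E}_{M\delta}$ for $M\ge 2$, whence $\mathcal{T}_1\mathcal{T}_2=(\mathcal{T}_1\Phi)^2$ fixes $\mathcal{E}_{M\delta}$, the case $M=1$ being the assertion $\mathcal{T}_1\mathcal{T}_2(\mathcal{E}_\delta)=\mathcal{E}_\delta$ already in Theorem 2.8.

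The essential step is $(3)\Rightarrow(1)$, and I expect it to be the main obstacle, since no purely formal manipulation of the transfer identity closes the loop. The idea is to set $Z_M:=\mathcal{T}_1\Phi(\mathcal{E}_{M\delta})-\mathcal{E}_{M\delta}$ and to compute it in two ways. On one hand, the equivariance and the transfer identity give $Z_M=X_{1,M-2}-X_{0,M-1}=\tfrac{1}{[2]_q}[\mathcal{E}_\delta,\mathcal{E}_{(M-1)\delta}]$ for $M\ge 2$; on the other hand, hypothesis $(3)$ says $(\mathcal{T}_1\Phi)^2$ fixes $\mathcal{E}_{M\delta}$, which forces $\mathcal{T}_1\Phi(Z_M)=-Z_M$. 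I would now run an induction on $M$: having verified the base case $\mathcal{T}_1\Phi(\mathcal{E}_\delta)=\mathcal{E}_\delta$ by a direct low-degree computation, and assuming $\mathcal{T}_1\Phi(\mathcal{E}_{(M-1)\delta})=\mathcal{E}_{(M-1)\delta}$, the fact that $\mathcal{T}_1\Phi$ is an algebra automorphism fixing $q$ (hence $[2]_q$) gives $\mathcal{T}_1\Phi(Z_M)=\tfrac{1}{[2]_q}[\mathcal{T}_1\Phi(\mathcal{E}_\delta),\mathcal{T}_1\Phi(\mathcal{E}_{(M-1)\delta})]=Z_M$. Comparing the two computations yields $2Z_M=0$, so $Z_M=0$ and $\mathcal{T}_1\Phi$ fixes $\mathcal{E}_{M\delta}$ as well. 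Thus $(3)$ propagates $\mathcal{T}_1\Phi$-invariance to every $\mathcal{E}_{M\delta}$, and then $X_{n,m}=(\mathcal{T}_1\Phi)^n(\mathcal{E}_{(n+m+1)\delta})=\mathcal{E}_{(n+m+1)\delta}$ recovers $(2)$, hence $(1)$ by the first paragraph. The two points that will need the most care are the evaluation of the base case $\mathcal{T}_1\Phi(\mathcal{E}_\delta)=\mathcal{E}_\delta$ (which does \emph{not} follow from the weaker $\mathcal{T}_1\mathcal{T}_2$-invariance of Theorem 2.8 and must be checked by hand) and the sign-collapse argument $\mathcal{T}_1\Phi(Z_M)=\pm Z_M$ that pins $Z_M$ to zero.
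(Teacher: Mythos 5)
The paper does not prove this proposition: it is imported verbatim from \cite{HZhuang} (Prop.~4.3), so there is no in-text argument to compare against. Your proposal is, as far as I can check, a correct self-contained derivation, and the logic closes properly. The transfer identity $[2]_q(X_{n+1,m}-X_{n,m+1})=[\mathcal{E}_\delta,X_{n,m}]$ does follow from the two anticommutator recursions of Theorem~2.8 exactly as you say (the four mixed triple products cancel and the sign $(-1)^{n+m+1}$ is unchanged under $(n,m)\mapsto(n+1,m)$ or $(n,m+1)$, both flipping it once), and it immediately gives $(1)\Leftrightarrow(2)$ with the $n=0$ base case being the definition $X_{0,m}=\mathcal{E}_{(m+1)\delta}$. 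The equivariances $\mathcal{T}_1\Phi(\mathcal{E}_{n\delta+\alpha_1})=\mathcal{E}_{(n+1)\delta+\alpha_1}$ and $\mathcal{T}_1\Phi(\mathcal{E}_{m\delta+\alpha_2})=\mathcal{E}_{(m-1)\delta+\alpha_2}$ (valid only for $m\ge 1$, which your indexing respects) and $(\mathcal{T}_1\Phi)^2=\mathcal{T}_1\mathcal{T}_2$ all follow from $\mathcal{T}_1\Phi=\Phi\mathcal{T}_2=(\mathcal{T}_2^{-1}\Phi)^{-1}$, so $(2)\Rightarrow(3)$ is fine. For the crux $(3)\Rightarrow(1)$, your two evaluations of $Z_M$ are both correct, and the base case $\mathcal{T}_1\Phi(\mathcal{E}_\delta)=\mathcal{E}_\delta$, which you rightly flag as not following from $\mathcal{T}_1\mathcal{T}_2$-invariance alone, does hold: one computes $\mathcal{T}_1\Phi(\mathcal{E}_\delta)=-[\mathcal{E}_{\delta+\alpha_1},f_1]k_1$ and then invokes $[\mathcal{E}_{\delta+\alpha_1},f_1]=-\mathcal{E}_\delta k_1^{-1}$, which is exactly the $n=1$ case of identity (3.12) established in the paper's Lemma~3.10. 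What your route buys is an equivalence proof driven entirely by one algebraic identity plus $\mathcal{T}_1\Phi$-equivariance, with the only hands-on input being that single low-degree computation; I see no gap.
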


\section{Drinfeld realization of \(U_{\textbf{q}}(\widehat{\mathfrak{sl}}_2)\)}
In order to obtain the intrinsic definition of Drinfled realization of \(  U_{\textbf{q}}(\widehat{\mathfrak{sl}}_2)\), we need to construct the generating function \(g^{\pm}(z)\) with \(\Omega\)-invariance, which is defined as follows.

We set \(g^{\pm}(z)=\sum\limits_{n\in\mathbb{Z}_+}c^{\pm}_n z^n\) a formal power series in \(z\), which can be expressed as follows \cite{HRZ, HZ}
\[g^{\pm}(z)=\frac{q^{\pm 2}z-1}{z-q^{\pm 2}}.\]

Before presenting the definition of the Drinfeld realization of \(\,U_{\textbf{q}}(\widehat{\mathfrak{sl}}_2)\), 
let us introduce the other generating functions in the variable \(z\) as follows (also see \cite{HRZ,HZ})
\begin{align*} &\delta(z)=\sum\limits_{n\in\mathbb{Z}}z^n,\hspace{2.3cm} x^{\pm}(z)=\sum\limits_{k\in\mathbb{Z}}x^{\pm}(k)z^{-k},\\ 
&k^+(z)=\sum\limits_{m=0}^{+\infty}k^+(m) z^{-m}=k_2 \exp \Big( (q{-}q^{-1})\sum\limits_{\ell=1}^{+\infty}  a(\ell)z^{-\ell}\Big),\\  
&k^-(z)=\sum\limits_{m=0}^{+\infty}k^-(-m) z^{m}=\exp \Big({-}(q{-}q^{-1}) \sum\limits_{\ell=1}^{+\infty}a(-\ell)z^{\ell}\Big)k^{-1}_2.
\end{align*}
\begin{proposition} 
Assume \(\Omega(q^{\pm1})=q^{\mp1}, \,\Omega(z)=z^{-1}, \Omega(x^{\pm}(k))=x^{\mp}(-k), \,\Omega(k^{\pm}(m))=k^{\mp}(\mp m)\), for \(m\in\mathbb{Z}_+\) with \(k^+(0)=k_2,\, k^-(0)=k^{-1}_2,\) then\\
\((\text{\rm i})\quad\Omega(g^{\pm}(z))=g^{\pm}(z), \,\text{and} \,\,g^{\pm}(z)^{-1}=g^{\mp}(z)=g^{\pm}(z^{-1}).\)\\
\((\text{\rm ii})\quad \Omega(\delta(z))=\delta(z),\, \Omega(x^{\pm}(z))=x^{\mp}(z),\, \Omega(k^{\pm}(z))=k^{\mp}(z).\)
\end{proposition}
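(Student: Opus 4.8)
The plan is to treat part (i) as a purely scalar identity among rational functions and part (ii) as a mode-by-mode application of the antiautomorphism $\Omega$ followed by a reindexing of the summation. The two parts are logically independent, so I would dispatch them separately.

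For (i), I would first record that $g^{+}(z)$ and $g^{-}(z)$ are scalar rational functions in $\mathbb{C}(q)(z)$, so that $\Omega$ acts on them simply by the substitutions $q\mapsto q^{-1}$ and $z\mapsto z^{-1}$; in particular $\Omega(g^{\pm}(z))=g^{\mp}(z^{-1})$, since inverting $q$ exchanges the two signs. The core computation is then the pair of rational-function identities $g^{+}(z)g^{-}(z)=1$ and $g^{+}(z^{-1})=g^{-}(z)$. The first follows by clearing denominators: both $(q^{2}z-1)(q^{-2}z-1)$ and $(z-q^{2})(z-q^{-2})$ expand to $z^{2}-(q^{2}+q^{-2})z+1$, so the product collapses to $1$. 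The second follows by multiplying the numerator and denominator of $g^{+}(z^{-1})=\frac{q^{2}z^{-1}-1}{z^{-1}-q^{2}}$ through by $-q^{-2}z$ and matching the result with $g^{-}(z)$. Combining these two identities yields $\Omega(g^{\pm}(z))=g^{\mp}(z^{-1})=g^{\pm}(z)$, which is the asserted $\Omega$-invariance, and simultaneously gives $g^{\pm}(z)^{-1}=g^{\mp}(z)=g^{\pm}(z^{-1})$.

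For (ii), I would apply $\Omega$ termwise. The invariance of $\delta(z)$ is immediate from $\Omega(z)=z^{-1}$ and the reindexing $n\mapsto -n$ in $\sum_{n\in\mathbb{Z}}z^{n}$. For $x^{\pm}(z)=\sum_{k}x^{\pm}(k)z^{-k}$, applying $\Omega$ to each summand and using that the scalar $z$ commutes past algebra elements gives $\Omega(x^{\pm}(k)z^{-k})=z^{k}x^{\mp}(-k)$; summing and reindexing $k\mapsto -k$ produces $\sum_{j}x^{\mp}(j)z^{-j}=x^{\mp}(z)$. The $k^{\pm}(z)$ case is identical in spirit: from $\Omega(k^{\pm}(m))=k^{\mp}(\mp m)$ and $\Omega(z^{-m})=z^{m}$ one reads off $\Omega(k^{+}(z))=\sum_{m\ge 0}k^{-}(-m)z^{m}=k^{-}(z)$, and likewise $\Omega(k^{-}(z))=k^{+}(z)$, matching the defining series for $k^{\mp}(z)$.

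The step I would treat most carefully is the internal consistency of the assumed mode action $\Omega(k^{\pm}(m))=k^{\mp}(\mp m)$ with the exponential definitions of $k^{\pm}(z)$. Since $\Omega$ is an \emph{anti}automorphism with $\Omega(q-q^{-1})=-(q-q^{-1})$ and $\Omega(k_{2})=k_{2}^{-1}$, applying it to $k_{2}\exp\!\big((q-q^{-1})\sum_{\ell}a(\ell)z^{-\ell}\big)$ reverses the order of the factor $k_{2}$ and the exponential and forces a compatible action of $\Omega$ on the imaginary modes $a(\pm\ell)$. I would verify that $\Omega(\exp X)=\exp(\Omega X)$ for the single relevant exponent $X$, and then check that the resulting series indeed reproduces $\exp\!\big({-}(q-q^{-1})\sum_{\ell}a(-\ell)z^{\ell}\big)k_{2}^{-1}$, so that the mode-level hypothesis is genuinely compatible with the closed forms rather than vacuous. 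Everything else reduces to bookkeeping with formal series and repeated use of the antiautomorphism property.
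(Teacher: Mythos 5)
Your proof is correct; the paper in fact states this proposition without any proof, and your argument supplies exactly the routine verification one would expect: the rational-function identities $g^{+}(z)g^{-}(z)=1$ and $g^{\pm}(z^{-1})=g^{\mp}(z)$ for part (i), and termwise application of $\Omega$ with reindexing for part (ii). Your added care about the compatibility of the mode-level hypothesis $\Omega(k^{\pm}(m))=k^{\mp}(\mp m)$ with the exponential formulas (using $\Omega(q-q^{-1})=-(q-q^{-1})$, $\Omega(k_2)=k_2^{-1}$, and $\Omega(\exp X)=\exp(\Omega X)$ for a single exponent) is a worthwhile check that the paper leaves implicit.
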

Now let us fomulate the inherent definition of Drinfeld realization for quantum affine algebra \(U_{\textbf{q}}(\widehat{\mathfrak{sl}}_2)\) via our generating functions with \(\Omega\)-invariance.
\begin{defi}  
The Drinfeld realization \(U^{D}_{\textbf{q}}(\widehat{\mathfrak{sl}}_2)\) associated to the quantum affine algebra
\(U_{\textbf{q}}(\widehat{\mathfrak{sl}}_2)\)
is the associative algebra with unit
\(1\) and generators  \(\bigl\{ x^{\pm}(k), \, k^{\pm}(m), \, \gamma^{\pm 1}, k^{\pm 1}_2 \big|\,~k \in
\mathbb{Z}, \, m \in \mathbb{Z}_+\bigr\}\)
satisfying the relations as below

\((C1)\) \(\gamma^{\pm 1},\,k^{\pm1}_2\) mutually commute, where \(\gamma^{\pm 2n}\) are central and \(k^{\pm1}(0)=k^{\pm1}_2\).

\((C2)\) The following relations involving \(k_2\) hold
\begin{align*}
&k_2k^{\pm}(w) = k^{\pm}(-w)k_2, \qquad
k_2x^{\pm}(w) = -q^{\pm 2}x^{\pm}(-w)k_2.
\end{align*}

\((C3)\) \(k^{\pm}(z)\) and \(k^{\pm}(w)\) satisfy the relation
\[
k^{\pm}(z)k^{\pm}(w) = k^{\pm}(-w)k^{\pm}(-z).
\]

\((C4)\) For \(k^+(z)\) and \(k^-(w)\), we have
\[
\dfrac{(z - q^2\gamma^{-1}w)(z - q^{-2}\gamma w)}{(z - q^{-2}\gamma^{-1}w)(z - q^{2}\gamma w)}k^+(z)k^{-}(w) 
= k^-(-w)k^+(-z).
\]

\((C5)\) The relation between \(k^+(z)\) and \(x^{+}(w)\) is
\[
(z - q^2\gamma^{-1}w)k^+(z)x^{+}(w) 
= (\gamma^{-1}w - q^2z)x^{+}(-w)k^+(z).
\]

\((C6)\) For \(k^+(z)\) and \(x^{-}(w)\), we have
\[
\dfrac{q^2z - w}{q^2w - z}k^+(z)x^{-}(w) 
= x^{-}(-w)k^+(-z).
\]

\((C7)\) The relation between \(x^{+}(z)\) and \(x^{-}(w)\) is given by
\[
\bigl[ x^{+}(z), x^{-}(w) \bigr] 
= \frac{1}{q - q^{-1}}\Bigl( \delta\Bigl(\gamma \frac{w}{z}\Bigr)k^+(w) 
- \delta\Bigl(\frac{w}{z}\gamma^{-1}\Bigr)k^{-}(z) \Bigr).
\]

\((C8)\) \(x^{\pm}(z)\) and \(x^{\pm}(w)\) satisfy the relation
\[x^{\pm}(z)x^{\pm}(w) = g^\pm\Bigl(\frac{z}{w}\Bigr)x^{\pm}(w)x^{\pm}(z).\]
\end{defi}

\begin{defi}\(\bigl({\bf Equivalent\,Definition.}\bigr)\) The unital associative algebra \(U^{D}_{\textbf{q}}(\widehat{\mathfrak{sl}}_2)\)  over \(\mathbb{K}\)  is generated by the elements \(x^{\pm}(k)\), \(a(l)\), \(k_2^{\pm 1}\), \(\gamma^{\pm 1}\)\,\((k \in \mathbb{Z}\), \(l\in \mathbb{Z}\backslash\{0\})\),  subject to the following defining relations

\((D1)\) 
\(\gamma^{\pm 2n}\) are central for \(n \in \mathbb{Z}_{+}\), \(k^{\pm1}_2,\,\gamma^{\pm1}\) mutually commute.

\((D2)\) 
For \(l \in \mathbb{Z}\setminus\{0\}\), \(k \in \mathbb{Z}\),  we have
\begin{align*}  
&\gamma a(l) = a(l)\gamma, \hspace{1.5cm} \gamma x^{\pm}(k)=-x^{\pm}(k)\gamma,\\ &k_2a(l) = (-1)^{l}a(l)k_2,\quad k_2 x^{\pm}(k)  = (-1)^{k+1} q^{\pm 2} x^{\pm}(k) k_2. \end{align*}

\((D3)\)
For \(l, l' \in \mathbb{Z}\setminus\{0\}\), we have
\[
\left[a(l),a(l')\right]=\delta_{l+l',0}\frac{[2l]_q}{|l|}\cdot\frac{\gamma^{|l|}-\gamma^{-|l|}}{q-q^{-1}}.
\]  

\((D4)\) 
For \(l \in \mathbb{Z}_{+}\), \(k \in \mathbb{Z}\), we have
\[
\left[a(l),x^{\pm}(k)\right]=-\dfrac{[2l]_q}{l}x^{\pm}(l+k)\gamma^l.
\]

\((D5)\) 
For \(k, k' \in \mathbb{Z}\), we have
\begin{align*} 
x^{\pm}(k+1)&x^{\pm}(k') - q^2 x^{\pm}(k)x^{\pm}(k' + 1) \\ 
&= (-1)^{k + k' + 1} \Bigl\{ x^{\pm}(k' + 1)x^{\pm}(k)  - q^2 x^{\pm}(k')x^{\pm}(k + 1) \Bigr\}. 
\end{align*}

\((D6)\)
For \(l, l' \in \mathbb{Z}\), we have 
\[
\left[x^+(l),x^-(l')\right]=\frac{1}{q-q^{-1}}\left(\gamma^l k^+(l+l')-\gamma^{-l}k^-(l+l')\right).
\]
\end{defi}

\subsection{Drinfeld Isomorphism} 
The core goal of this section is to establish an algebra isomorphism between the novel quantum affine algebra \(U_{\textbf{q}}(\widehat{\mathfrak{sl}}_{2})\) (defined in Section 2) and its Drinfeld realization \(U^{D}_{\textbf{q}}(\widehat{\mathfrak{sl}}_{2})\) (defined in Section 3). To achieve this, we first state three key theorems, where Theorem 3.5 (surjective homomorphism) and Theorem 3.6 (inverse mapping)  imply  Isomorphism Theorem 3.4. We present the detailed proof of Theorem 3.5 below, and omit that of Theorem 3.6.

\begin{theorem}  
Let \(U_{\textbf{q}}^{D}(\widehat{\mathfrak{sl}}_{2})\) denote the Drinfeld realization of \(U_{\textbf{q}}(\widehat{\mathfrak{sl}}_{2})\). Then we have an algebraic isomorphism:
\[U_{\textbf{q}}(\widehat{\mathfrak{sl}}_{2}) \cong U_{\textbf{q}}^{D}(\widehat{\mathfrak{sl}}_{2}).\]  
\end{theorem}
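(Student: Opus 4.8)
The plan is to prove the isomorphism by exhibiting a pair of mutually inverse algebra homomorphisms between the two presentations, exactly along the lines foreshadowed by Theorems 3.5 and 3.6. First I would define a homomorphism \(\Psi\colon U_{\mathbf q}(\widehat{\mathfrak{sl}}_2)\to U_{\mathbf q}^{D}(\widehat{\mathfrak{sl}}_2)\) on the Chevalley generators by sending the node \(\alpha_1\) to the degree-zero loop generators and the affine node \(\alpha_2\) to the degree-one ones dressed with grouplikes, schematically
\[
\Psi(e_1)=x^{+}(0),\quad \Psi(f_1)=x^{-}(0),\quad \Psi(k_1)=\gamma k_2^{-1},\quad \Psi(k_2)=k_2,
\]
\[
\Psi(e_2)=c\,x^{-}(1)\,k_2,\qquad \Psi(f_2)=c'\,k_2^{-1}\,x^{+}(-1),
\]
for suitable scalars \(c,c'\in\mathbb{C}(q)^{*}\) fixed so that (A3) comes out correctly normalized. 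To see that \(\Psi\) is well defined one checks that these images satisfy (A1)--(A4): the grouplike relations (A1)--(A2) are immediate from (C1)--(C2)/(D1)--(D2), the cross relation (A3) follows from the \(\ell=0,\pm1\) cases of (C7)/(D6), and the two \(q\)-Serre relations (A4) reduce to the \(x^{\pm}x^{\pm}\) relation (C8)/(D5) at the relevant degrees.

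Next I would construct the reverse homomorphism \(\Theta\colon U_{\mathbf q}^{D}(\widehat{\mathfrak{sl}}_2)\to U_{\mathbf q}(\widehat{\mathfrak{sl}}_2)\) using the real and imaginary root vectors of Section 2. Concretely, \(\Theta\) sends each Drinfeld generator to the root vector of matching degree: \(x^{+}(k)\mapsto \mathcal{E}_{k\delta+\alpha_1}\) for \(k\ge 0\) and to the corresponding \(\mathcal F\)-type vector for \(k<0\) (obtained through \(\Omega\) and \(\Phi\)), \(x^{-}(k)\mapsto \Omega(\Theta(x^{+}(-k)))\), the imaginary generators \(a(l)\mapsto \mathcal E_{l\delta}\) and \(a(-l)\mapsto \mathcal F_{l\delta}\) for \(l>0\) up to the normalizing scalars appearing in (D3)--(D4), together with \(k_2\mapsto k_2\) and \(\gamma\mapsto k_1k_2\). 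The content is that these root vectors genuinely satisfy all of (C1)--(C8), equivalently (D1)--(D6); this is what makes \(\Theta\) a homomorphism, and this is the step the authors indicate they omit.

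I expect the verification of well-definedness of \(\Theta\) to be the main obstacle. The grouplike and degree relations (D1)--(D2) and the \(x^{\pm}x^{\pm}\) relation (D5) follow from the braid-group identities of Theorem 2.8 and Corollary 2.9 together with the explicit \(\Omega\)-invariant factor \(g^{\pm}(z)\); but the relations tying real and imaginary root vectors together---the bracket \([a(l),x^{\pm}(k)]\) of (D4), the commutator \([x^{+}(l),x^{-}(l')]\) of (D6), and the quadratic relations (C3)--(C4)---require establishing infinitely many commutators at once. This is precisely where the \(\Omega\)-invariant generating functions of Section 3 earn their keep: Proposition 3.1 lets me deduce every ``\(-\)'' relation from its ``\(+\)'' counterpart by applying the antiautomorphism \(\Omega\), roughly halving the labor, while Proposition 2.10 supplies the centrality \([\mathcal E_\delta,\mathcal E_{n\delta}]=0\) that underlies the additive structure of the imaginary root lattice and hence the exponential form of \(k^{\pm}(z)\).

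Finally, with both \(\Psi\) and \(\Theta\) shown to be algebra homomorphisms, I would check that \(\Theta\circ\Psi=\mathrm{id}\) and \(\Psi\circ\Theta=\mathrm{id}\). Since both algebras are finitely generated, it suffices to verify these identities on generators: \(\Theta\circ\Psi=\mathrm{id}\) is a short computation on \(e_i,f_i,k_i^{\pm1}\), and \(\Psi\circ\Theta=\mathrm{id}\) holds on \(x^{\pm}(0),x^{\pm}(\pm1),k_2,\gamma\) directly and then propagates to all \(x^{\pm}(k)\) and \(a(l)\) by the same braid/adjoint recursions used to define the root vectors (equivalently, \(\Psi\) intertwines the \(\mathcal T_1\Phi\)-action with the degree shift). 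Mutually inverse algebra homomorphisms then give the desired isomorphism \(U_{\mathbf q}(\widehat{\mathfrak{sl}}_2)\cong U_{\mathbf q}^{D}(\widehat{\mathfrak{sl}}_2)\), completing the proof.
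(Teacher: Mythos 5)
Your plan is, up to swapping the names of the two maps, the same as the paper's: your $\Theta$ is the paper's Theorem~3.5 (Drinfeld generators $\mapsto$ root vectors, the hard direction, carried out there via a chain of commutation lemmas for the $\mathcal{E}$'s and $\mathcal{F}$'s), your $\Psi$ is the paper's Theorem~3.6 ($e_1\mapsto x^+(0)$, $f_1\mapsto x^-(0)$, $e_2\mapsto -x^-(1)k_2^{-1}$, $f_2\mapsto -k_2x^+(-1)$, $k_1\mapsto\gamma k_2^{-1}$), and the compositions are checked on generators exactly as you describe. The one point that must be sharpened is the image of the imaginary generators: $a(l)$ cannot be sent to a scalar multiple of $\mathcal{E}_{l\delta}$. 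The paper defines it recursively,
\[
\Psi\bigl(a(k)\bigr)=(k_1k_2)^{k}\mathcal{E}_{k\delta}-\frac{q-q^{-1}}{k}\sum_{r=1}^{k-1}r\,(k_1k_2)^{k-r}\mathcal{E}_{(k-r)\delta}\,\Psi\bigl(a(r)\bigr),
\]
and the lower-degree correction terms (the logarithm implicit in the exponential form of $k^{+}(z)$ that you allude to) together with the $(k_1k_2)^{k}$ prefactor are exactly what force relations (D2) and (D3): the cross-commutators $[\mathcal{E}_{n\delta},\mathcal{F}_{m\delta}]$ do not vanish for $n\neq m$, whereas $[a(n),a(-m)]$ must, so a pure rescaling of $\mathcal{E}_{l\delta}$ and $\mathcal{F}_{l\delta}$ would fail the Heisenberg relation. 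With that correction, your outline coincides with the paper's argument, including the use of $\Omega$-invariance to halve the verifications and of Theorem~2.8 and Proposition~2.10 for the braid-group and centrality inputs.
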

\begin{theorem} 
There exists a surjective algebra homomorphism \(\Psi: U_{\textbf{q}}^{D}(\widehat{\mathfrak{sl}}_{2}) \to U_{\textbf{q}}(\widehat{\mathfrak{sl}}_{2})\) defined by  
\[
\begin{cases}
\gamma \longmapsto k_{1}k_{2}, \quad k_{2} \longmapsto k_{2}, \\
x^{+}(n) \longmapsto (\mathcal{T}_{1}\Phi)^{n}(e_{1}), \quad x^{-}(n) \longmapsto (\mathcal{T}_{1}\Phi)^{-n}(f_{1}), \\
a(k) \longmapsto (k_{1}k_{2})^{k}\mathcal{E}_{k\delta} - \dfrac{q-q^{-1}}{k}\sum\limits_{r=1}^{k-1}r(k_{1}k_{2})^{k-r}\mathcal{E}_{(k-r)\delta}\Psi(a(r)), \\
a(-k) \longmapsto \mathcal{F}_{k\delta}(k_{1}k_{2})^{-k} + \dfrac{q-q^{-1}}{k}\sum\limits_{r=1}^{k-1}r\Psi(a(-r))\mathcal{F}_{(k-r)\delta}(k_{1}k_{2})^{-k+r},
\end{cases}
\]  
for all \(n \in \mathbb{Z}\) and \(k \in \mathbb{Z}_{+}\). \end{theorem}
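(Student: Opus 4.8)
The plan is to show that the assignment $\Psi$ respects every defining relation (D1)--(D6) of $U^{D}_{\textbf{q}}(\widehat{\mathfrak{sl}}_2)$, so that it extends to a well-defined algebra homomorphism, and then to verify surjectivity by producing all Chevalley generators of $U_{\textbf{q}}(\widehat{\mathfrak{sl}}_2)$ in the image. Because the images of $a(\pm k)$ are given recursively in the imaginary root vectors $\mathcal{E}_{k\delta}$ and $\mathcal{F}_{k\delta}$, I would organize the whole verification around the generating-function identity $\Psi(k^{+}(z))=k_2\exp\!\big((q-q^{-1})\sum_{\ell\ge 1}\Psi(a(\ell))z^{-\ell}\big)$: the recursion defining $\Psi(a(k))$ is exactly the Newton-type inversion expressing the primitive generators $a(\ell)$ through the ``power-sum'' generators $\mathcal{E}_{k\delta}$, so that $\sum_{m\ge 0}(k_1k_2)^{m}\mathcal{E}_{m\delta}z^{-m}$ becomes $k_2^{-1}\Psi(k^{+}(z))$. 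Keeping this identity in mind turns each relation involving $k^{\pm}(z)$ into a statement purely about $\mathcal{E}_{k\delta}$ and the real root vectors.

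First I would dispatch the grading relations (D1) and (D2). Using (A1)--(A2) one checks $(k_1k_2)e_i=-e_i(k_1k_2)$ and $(k_1k_2)f_i=-f_i(k_1k_2)$ for $i=1,2$, whence $(k_1k_2)^{2}=\Psi(\gamma)^{2}$ is central and $k_2,k_1k_2$ commute. The remaining relations of (D2) are weight computations: the real root vector $(\mathcal{T}_1\Phi)^{n}(e_1)=\mathcal{E}_{n\delta+\alpha_1}$ has weight $(n+1)\alpha_1+n\alpha_2$ and $\mathcal{E}_{k\delta}$ has weight $k(\alpha_1+\alpha_2)$, so the eigenvalues of $\mathrm{ad}\,k_2$ and $\mathrm{ad}(k_1k_2)$ on them are read off directly from (A2); feeding these through the recursion produces the scalars $(-1)^{l}$, $(-1)^{k+1}q^{\pm2}$, etc.

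Next come the mixed relations. The degree-raising relation (D4) I would obtain from Theorem~2.8 (\cite{HZhuang}, Thm.~4.1): its anticommutator identity $\mathcal{E}_\delta\,\mathcal{E}_{n\delta+\alpha_1}+\mathcal{E}_{n\delta+\alpha_1}\,\mathcal{E}_\delta=[2]_q\,\mathcal{E}_{(n+1)\delta+\alpha_1}$ becomes, after writing $\mathcal{E}_\delta=(k_1k_2)^{-1}\Psi(a(1))$, the commutator $\big[a(1),x^{+}(n)\big]=-[2]_q\,x^{+}(n+1)\gamma$; here the anticommutator is converted into a commutator precisely by the sign relation $\gamma x^{\pm}=-x^{\pm}\gamma$ of (D2), and the general $l$ then follows by induction. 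The quadratic relation (D5) I would deduce from the commutation rules among the real root vectors $\mathcal{E}_{k\delta+\alpha_1}$ (equivalently (C8)), which are consequences of the $q$-Serre relations (A4) and Corollary~2.9, expanded into modes. The Heisenberg relation (D3) is then the translation, through the Newton recursion above, of the bracket $[\mathcal{E}_{k\delta},\mathcal{E}_{l\delta}]$ controlled by Proposition~2.10 and Corollary~2.9.

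The technical heart, and the step I expect to be the main obstacle, is the full current relation (D6). Its base case $l=l'=0$ reduces to the Chevalley relation (A3), and the purely positive (resp.\ negative) modes are governed by (D4); the difficulty is the systematic induction on $|l|+|l'|$ that assembles the right-hand side $\gamma^{l}k^{+}(l+l')-\gamma^{-l}k^{-}(l+l')$, because here the $\Omega$-invariant functions $k^{\pm}(z)$ and the nonlinear recursion for $a(\pm k)$ interact and every commutator must be reconciled with the exponential identity. I would carry this out by commuting $a(\pm r)$ past the opposite current via repeated use of (D4), reducing to the base case and to (D3); the bookkeeping of the $-q^{\pm2}$ factors of (A2) and the $(-1)^{\,\cdot}$ signs of (D2) is where the genuine care lies. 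For surjectivity it then suffices to exhibit $e_2,f_2\in\mathrm{Im}\,\Psi$: the image already contains $e_1=\Psi(x^{+}(0))$, $f_1=\Psi(x^{-}(0))$, $k_2=\Psi(k_2)$, $k_1=\Psi(\gamma k_2^{-1})$, together with $\mathcal{E}_\delta=\Psi(\gamma^{-1}a(1))=e_1e_2-q^{-2}e_2e_1$ and $\mathcal{F}_\delta=\Psi(a(-1)\gamma)$; bracketing $f_1$ against $\mathcal{E}_\delta$ and invoking (A3) isolates $e_2$ up to an invertible Cartan factor, and symmetrically for $f_2$, after which all generators of $U_{\textbf{q}}(\widehat{\mathfrak{sl}}_2)$ lie in the image and $\Psi$ is onto.
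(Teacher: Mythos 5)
Your overall strategy --- checking that $\Psi$ preserves the defining relations (D1)--(D6) by means of commutation identities among the real and imaginary root vectors, with Theorem 2.8 driving (D4) and the $\mathcal{E}_{k\delta+\alpha_1}$-commutation rules driving (D5) --- is exactly the paper's, and your surjectivity argument is fine (the paper treats surjectivity as immediate, since $\Psi(x^{+}(-1))$ and $\Psi(x^{-}(1))$ already yield $f_2$ and $e_2$ up to invertible Cartan factors).

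The genuine problem is your treatment of (D3). You reduce the Heisenberg relation to ``the bracket $[\mathcal{E}_{k\delta},\mathcal{E}_{l\delta}]$ controlled by Proposition~2.10 and Corollary~2.9,'' but that bracket vanishes and can only account for the case $l,l'>0$ (and, after applying $\Omega$, the case $l,l'<0$). The entire content of (D3) is the central term $\delta_{l+l',0}\tfrac{[2l]_q}{|l|}\cdot\tfrac{\gamma^{|l|}-\gamma^{-|l|}}{q-q^{-1}}$, which lives in the mixed case $l>0>l'$ and maps under $\Psi$ to a bracket between an expression in the $\mathcal{E}_{k\delta}$'s and one in the $\mathcal{F}_{k\delta}$'s. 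No identity among the $\mathcal{E}$'s alone can produce it: it must be traced back to $[e_1,f_1]=\tfrac{k_1-k_1^{-1}}{q-q^{-1}}$ through the cross-commutators $[\mathcal{E}_{n\delta+\alpha_i},\mathcal{F}_{m\delta+\alpha_j}]$, then through $[\Psi(a(n)),\mathcal{F}_{m\delta}]$ in the three regimes $n<m$, $n=m$, $n>m$, and only then can the recursion defining $\Psi(a(-m))$ be unwound. This is precisely where the paper concentrates its effort (its Lemma~3.10, Corollaries~3.13--3.15 and Lemma~3.16), and that block is absent from your plan; conversely you place the ``technical heart'' in (D6), which the paper dispatches quickly from the same family of $\mathcal{E}$--$\mathcal{F}$ cross-commutators once they are in hand. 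Until those lemmas are supplied, your verification of (D3) does not go through.
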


\begin{theorem}
There exists a surjective \(\Xi: U_{\textbf{q}}(\widehat{\mathfrak{sl}}_{2}) \to U_{\textbf{q}}^{D}(\widehat{\mathfrak{sl}}_{2})\) such that \(\Psi\Xi=\Xi\Psi=\text{\rm id}\). 
\end{theorem}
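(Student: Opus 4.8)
The plan is to build $\Xi$ by inverting $\Psi$ on the lowest root vectors and then promoting it to an algebra map. From Theorem 3.5 one reads off $\Psi(x^{+}(0))=e_{1}$, $\Psi(x^{-}(0))=f_{1}$, $\Psi(k_{2})=k_{2}$, $\Psi(\gamma)=k_{1}k_{2}$, and a short computation with the braid automorphisms gives $\Psi(x^{-}(1))=(\mathcal{T}_{1}\Phi)^{-1}(f_{1})=-e_{2}k_{2}$ and $\Psi(x^{+}(-1))=(\mathcal{T}_{1}\Phi)^{-1}(e_{1})=-k_{2}^{-1}f_{2}$. This forces the assignment
\[
\Xi(e_{1})=x^{+}(0),\quad \Xi(f_{1})=x^{-}(0),\quad \Xi(e_{2})=-x^{-}(1)k_{2}^{-1},\quad \Xi(f_{2})=-k_{2}x^{+}(-1),
\]
\[
\Xi(k_{1})=\gamma k_{2}^{-1},\qquad \Xi(k_{2})=k_{2},
\]
which is precisely the unique choice making $\Psi\circ\Xi$ act as the identity on each Chevalley generator; the orderings and signs are pinned down by the $k_2$--$e_2$, $k_2$--$f_2$ relations of (A2).

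Next I would verify that $\Xi$ respects the defining relations (A1)--(A4), so that it extends to an algebra homomorphism. Relation (A1) and the $k$--$e$/$k$--$f$ commutations (A2) are immediate from (D1)--(D2), once one records that $\gamma$ anticommutes with the currents. The cross relations (A3) follow from (D6) specialised to the mode indices forced above, using the mode-zero values of $k^{\pm}(z)$ to recover the Cartan terms $\tfrac{k_{i}-k_{i}^{-1}}{q-q^{-1}}$. The delicate point is the $q$-Serre relations (A4): because $\Xi(e_{2})$ is a negative current while $\Xi(e_{1})$ is a positive one, the cubic Serre expression becomes a mixed $x^{+}$--$x^{-}$ identity that must be collapsed using the quadratic current relation (D5) together with the cross relation (D6); the $f$-Serre relation is then obtained for free by applying the antiautomorphism $\Omega$.

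To finish, I would check the two compositions. That $\Psi\Xi=\mathrm{id}$ holds on all six generators is immediate from the construction, and this already shows $\Xi$ is injective. For surjectivity it suffices to note that the image of $\Xi$ contains $x^{+}(0),x^{-}(0),x^{-}(1),x^{+}(-1),\gamma^{\pm1},k_{2}^{\pm1}$: relation (D6) recovers $a(\pm1)$ from the brackets $[x^{+}(0),x^{-}(1)]$ and $[x^{+}(-1),x^{-}(0)]$ through the mode expansions of $k^{\pm}(z)$, and then (D4) and its negative-mode companion produce every $x^{\pm}(n)$, whence every $a(\ell)$. Thus $\Xi$ is a bijective homomorphism, so $\Psi=\Xi^{-1}$ and therefore $\Xi\Psi=\mathrm{id}$ as well. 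Alternatively, one can verify $\Xi\Psi=\mathrm{id}$ directly by induction on degree: applying $\Xi$ to the recursion $\mathcal{E}_{\delta}\mathcal{E}_{n\delta+\alpha_{1}}+\mathcal{E}_{n\delta+\alpha_{1}}\mathcal{E}_{\delta}=[2]_{q}\mathcal{E}_{(n+1)\delta+\alpha_{1}}$ and matching against (D4) yields $\Xi(\mathcal{E}_{n\delta+\alpha_{1}})=x^{+}(n)$, and feeding this into $\mathcal{E}_{(n+m+1)\delta}=\mathcal{E}_{n\delta+\alpha_{1}}\mathcal{E}_{m\delta+\alpha_{2}}+(-1)^{n+m+1}q^{-2}\mathcal{E}_{m\delta+\alpha_{2}}\mathcal{E}_{n\delta+\alpha_{1}}$ identifies the imaginary generators.

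The main obstacle is the Serre verification in the second step, since it is the only place where a positive and a negative current interact and it cannot be reduced to a single Drinfeld relation. I expect to lean on Proposition 3.1 throughout, using the $\Omega$-invariance of $g^{\pm}$ and $k^{\pm}(z)$ to transport each identity proved on the $+$ side to its $-$ counterpart rather than recomputing it. In the direct-induction variant the comparable difficulty is matching $\Xi\Psi(a(k))$ against the nested recursion defining $\Psi(a(k))$, which requires running the real-root and imaginary-root inductions simultaneously and keeping the signs consistent across the two halves.
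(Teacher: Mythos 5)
Your proposal defines \(\Xi\) on the Chevalley generators by exactly the same assignments as the paper (\(e_1\mapsto x^{+}(0)\), \(f_1\mapsto x^{-}(0)\), \(e_2\mapsto -x^{-}(1)k_2^{-1}\), \(f_2\mapsto -k_2x^{+}(-1)\), \(k_1\mapsto \gamma k_2^{-1}\), \(k_2\mapsto k_2\)) and follows the same strategy of inverting \(\Psi\) on generators and checking the two compositions. The paper's own proof stops after stating the definition and asserting that \(\Psi\Xi=\Xi\Psi=\mathrm{id}\) is "not difficult to see," so your outline — including the correct identification of the mixed \(x^{+}\)--\(x^{-}\) Serre verification as the delicate step — is essentially the paper's argument, carried slightly further.
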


\subsection{Proof of Theorem 3.5}
To prove Theorem 3.5, we only need to verify that the mapping \(\Psi\) is a surjective algebra homomorphism. First, the surjectivity of \(\Psi\)
is obvious, since the image of \(\Psi\) exactly constitutes the generator set of \(U_{\textbf{q}}(\widehat{\mathfrak{sl}}_{2})\). Next, we only need to further verify that \(\Psi\)
is an algebra homomorphism.

We rely on commutation relations of root vectors  to check that \(\Psi\)
is an algebra homomorphism.
Before starting the proof, we first state several identities (all obtainable via induction).
\begin{align*}
&(\mathcal{T}_{1}\Phi)^{n}(e_{1})=\mathcal{E}_{n\delta+\alpha_1},\quad (\mathcal{T}_{1}\Phi)^{-n}(e_{1})=-k^{-n+1}_1k^{-n}_2\mathcal{F}_{(n-1)\delta+\alpha_2},\\
&(\mathcal{T}_{1}\Phi)^{n}(f_{1})=\mathcal{F}_{n\delta+\alpha_1},\quad (\mathcal{T}_{1}\Phi)^{-n}(f_{1})=-\mathcal{E}_{(n-1)\delta+\alpha_2}k^{n-1}_1k^n_2, \quad \forall\, n\in\mathbb{Z}_{+}.
\end{align*}

\begin{lemma}
For \(n,m\in\mathbb{Z}_{\ge 0}\), \(n\ge m\), the commutation relations between \(\mathcal{E}_{n\delta+\alpha_{1}}\) and \(\mathcal{E}_{m\delta+\alpha_{1}}\) are 
\begin{align*}     
\mathcal{E}_{n\delta+\alpha_{1}}&\mathcal{E}_{m\delta+\alpha_{1}}+(-1)^{n+m}q^{-2}\mathcal{E}_{m\delta+\alpha_{1}}\mathcal{E}_{n\delta+\alpha_{1}}\\ &=q^{-2}\mathcal{E}_{(n-1)\delta+\alpha_{1}}\mathcal{E}_{(m+1)\delta+\alpha_{1}}+(-1)^{n+m}\mathcal{E}_{(m+1)\delta+\alpha_{1}}\mathcal{E}_{(n-1)\delta+\alpha_{1}}.  
\end{align*}  
\end{lemma}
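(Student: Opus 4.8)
The plan is to write \(E_k:=\mathcal{E}_{k\delta+\alpha_1}=(\mathcal{T}_1\Phi)^k(e_1)\) and recast the claim as \(Q(n,m)=\widetilde Q(n-1,m+1)\), where \(Q(a,b)=E_aE_b+(-1)^{a+b}q^{-2}E_bE_a\) and \(\widetilde Q(a,b)=q^{-2}E_aE_b+(-1)^{a+b}E_bE_a\). Since \(\mathcal{T}_1\Phi\) is a \(q\)-fixing algebra automorphism with \((\mathcal{T}_1\Phi)(E_k)=E_{k+1}\), applying it carries the identity for the pair \((n,m)\) to the one for \((n+1,m+1)\); hence it suffices to treat \(m=0\) with \(n\ge 1\) (an off-diagonal \(P(n,m)\) reduces via \((\mathcal{T}_1\Phi)^{-m}\) to \(P(n-m,0)\), without ever producing a negative index, and the diagonal \(n=m\ge1\) reduces to \(P(1,1)\), which is literally the base case \(P(2,0)\)). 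The engine is the recursion of Theorem 2.7, which here reads \(\mathcal{E}_\delta E_k+E_k\mathcal{E}_\delta=[2]_q E_{k+1}\); from it a one-line substitution yields the auxiliary commutator identity \([\mathcal{E}_\delta,\,E_cE_b]=[2]_q(E_{c+1}E_b-E_cE_{b+1})\), valid for all \(c,b\ge 0\).

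The base case \(P(1,0)\) is the adjacency relation \(E_1E_0=q^{-2}E_0E_1\). I would prove it by expanding \(E_1=\mathcal{T}_1(e_2)=e_1^{(2)}e_2-q^{-2}e_2e_1^{(2)}+\tfrac{1-q^{-2}}{[2]_q}e_1e_2e_1\) (Proposition 2.4), forming \(E_1e_1-q^{-2}e_1E_1\), and substituting the \(q\)-Serre relation (A4) in the form \(e_1^3e_2+e_2e_1^3=(q^2+q^{-2}-1)(e_1^2e_2e_1+e_1e_2e_1^2)\); the coefficients then collapse to zero. Applying \(\mathcal{T}_1\Phi\) upgrades this to the shifted adjacencies \(E_{k+1}E_k=q^{-2}E_kE_{k+1}\) for all \(k\ge0\). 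The next case \(P(2,0)\), namely \(E_2E_0+q^{-2}E_0E_2=(1+q^{-2})E_1^2\), then follows by applying the recursion to \(E_2\), using \(\mathcal{E}_\delta E_0=[2]_qE_1-E_0\mathcal{E}_\delta\) to push \(\mathcal{E}_\delta\) to the outside, and invoking the adjacency \(E_1E_0=q^{-2}E_0E_1\) twice; the interior \(\mathcal{E}_\delta\)-terms cancel and the identity drops out.

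For the inductive step I would apply the recursion to the top vector \(E_n\) inside \(Q(n,0)\) and rewrite the two interior terms through \(\mathcal{E}_\delta E_0=[2]_q E_1-E_0\mathcal{E}_\delta\); after collecting, the \(\mathcal{E}_\delta\)-conjugation assembles into a single commutator and one finds
\[
[2]_q\,Q(n,0)=\bigl[\mathcal{E}_\delta,\,Q(n-1,0)\bigr]+[2]_q\bigl(E_{n-1}E_1+(-1)^n q^{-2}E_1E_{n-1}\bigr).
\]
Now I would invoke the induction hypothesis in its reflected form \(Q(n-1,0)=\widetilde Q(n-2,1)=q^{-2}E_{n-2}E_1+(-1)^{n-1}E_1E_{n-2}\), expand \([\mathcal{E}_\delta,Q(n-1,0)]\) by the auxiliary commutator identity, and collect the four resulting products. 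After simplification the computed value of \(Q(n,0)\) exceeds the target \(\widetilde Q(n-1,1)\) by exactly \(Q(n-1,1)-\widetilde Q(n-2,2)\), which is precisely the defect of the lemma at the pair \((n-1,1)\); since \(P(n-1,1)\cong P(n-2,0)\) under \(\mathcal{T}_1\Phi\), it vanishes by the (strong) induction hypothesis, closing the induction. Finally \(\mathcal{T}_1\Phi\) transports \(P(n-m,0)\) back to the asserted \(P(n,m)\).

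The main obstacle, which I would flag explicitly, is that the recursion of Theorem 2.7 is an \emph{anticommutator} relation that is formally self-consistent with the claim: if one attempts to close the induction using only \(\mathcal{E}_\delta E_k+E_k\mathcal{E}_\delta=[2]_qE_{k+1}\) together with the commutators it generates, every term cancels and one is left with the tautology \(P(n,0)\Leftrightarrow P(n,0)\). The genuine content must therefore be injected through the Serre-derived adjacency \(E_1E_0=q^{-2}E_0E_1\) and through feeding back \emph{strictly lower} instances of the lemma in their reflected \(\widetilde Q\)-form. Arranging the strong induction on \(n\) (which uses both \(P(n-1,0)\) and \(P(n-2,0)\)) and the attendant sign and \(q^{-2}\)-bookkeeping so that the leftover after the commutator expansion is exactly an already-established case is where the real work lies.
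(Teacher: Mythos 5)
Your proposal is correct and takes essentially the same route as the paper's own proof: both reduce \(P(n,m)\) to \(P(n-m,0)\) (resp.\ \(P(1,1)\)) by transporting with \(\mathcal{T}_1\Phi\), and both close the induction on \(n\) using the anticommutator recursion \(\mathcal{E}_\delta\mathcal{E}_{k\delta+\alpha_1}+\mathcal{E}_{k\delta+\alpha_1}\mathcal{E}_\delta=[2]_q\mathcal{E}_{(k+1)\delta+\alpha_1}\), with the leftover after expansion recognized as the defect of a strictly lower, already-established instance. The only cosmetic difference is the base case \(\mathcal{E}_{\delta+\alpha_1}e_1=q^{-2}e_1\mathcal{E}_{\delta+\alpha_1}\): you expand \(\mathcal{T}_1(e_2)\) and feed in the \(q\)-Serre relation (A4), whereas the paper pulls the relation back through \(\mathcal{T}_1\Phi\) to \(-e_1k_2^{-1}f_2+q^{-2}k_2^{-1}f_2e_1\) and uses (A2)--(A3); both verifications are valid.
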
 
\begin{proof}
We prove the lemma for indices \( n \) and \( m \) by double induction. First, we verify that the base case \( P(n,0) \) holds. Second, we assume that the lemma \( P(n,i) \) is true for all positive integers \( i \)  satisfying \( i<m \), then proceed to prove that \( P(n,m) \) holds.

(I) When \(m = 0\), let us consider the relations 
\begin{align}
\mathcal{E}_{n\delta+\alpha_1} e_1 + (-1)^n q^{-2} e_1 \mathcal{E}_{n\delta+\alpha_1} = q^{-2} \mathcal{E}_{(n-1)\delta+\alpha_1} \mathcal{E}_{\delta+\alpha_1} + (-1)^n \mathcal{E}_{\delta+\alpha_1} \mathcal{E}_{(n-1)\delta+\alpha_1}. 
\end{align}

We first examine the case of \(n=1\), then extend the result to general \(n \geq 1\).
 
Substituting \(n = 1\) and \(m = 0\) into (3.1), we need to prove 
\(
\mathcal{E}_{\delta+\alpha_1} e_1 =q^{-2} e_1 \mathcal{E}_{\delta+\alpha_1}.
\) It is now straightforward to verify that 
\[
\mathcal{E}_{\delta+\alpha_1} e_1 - q^{-2} e_1 \mathcal{E}_{\delta+\alpha_1}=\mathcal{T}_1\Phi\left(-e_1k^{-1}_2f_2 + q^{-2}k^{-1}_2f_2e_1\right).
\]    
We use \((A2)\) and \((A3)\) to get that
\(
\mathcal{E}_{\delta+\alpha_1} e_1 =q^{-2} e_1 \mathcal{E}_{\delta+\alpha_1}\).
 
Assume relation (3.1) holds for \(n = k\,(k>1)\). To confirm it holds for \(n = k+1\), we need to use the following identities
\begin{align}
[2]_q\mathcal{E}_{(t+1)\delta+\alpha_{1}} = \mathcal{E}_{\delta}\mathcal{E}_{t\delta+\alpha_{1}} + \mathcal{E}_{t\delta+\alpha_{1}}\mathcal{E}_{\delta},\quad \forall t\ge 0,\qquad (\text{by Theorem 2.10})
\end{align}
and use induction hypothesis to get
\begin{align}
\mathcal{E}_{s\delta+\alpha_1} e_1 + (-1)^s q^{-2} e_1 \mathcal{E}_{s\delta+\alpha_1} = q^{-2} \mathcal{E}_{(s-1)\delta+\alpha_1} \mathcal{E}_{\delta+\alpha_1} + (-1)^s \mathcal{E}_{\delta+\alpha_1} \mathcal{E}_{(s-1)\delta+\alpha_1},
\end{align}
where \(1\le s\le k\).

Applying (3.2) and (3.3), it is straightforward to verify that
\begin{align*}
[2]_q\mathcal{E}_{(k+1)\delta+\alpha_{1}}e_{1} &=\Bigl\{\mathcal{E}_{\delta}\mathcal{E}_{k\delta+\alpha_{1}} + \mathcal{E}_{k\delta+\alpha_{1}}\mathcal{E}_{\delta}\Bigr\}e_{1}\\
&=\mathcal{E}_{\delta}\Bigl\{(-1)^{k-1} q^{-2}e_1\mathcal{E}_{k\delta+\alpha_{1}}+q^{-2}\mathcal{E}_{(k-1)\delta+\alpha_{1}}\mathcal{E}_{\delta+\alpha_{1}}+(-1)^k\mathcal{E}_{\delta+\alpha_{1}}\mathcal{E}_{(k-1)\delta+\alpha_{1}}\Bigr\}\\
&\quad+\mathcal{E}_{k\delta+\alpha_{1}}\Bigl\{[2]_q\mathcal{E}_{\delta+\alpha_{1}} -e_1\mathcal{E}_{\delta}\Bigr\}\\
&=(-1)^{k-1} q^{-2}\Bigl\{[2]_q\mathcal{E}_{\delta+\alpha_{1}} -e_1\mathcal{E}_{\delta}\Bigr\}\mathcal{E}_{k\delta+\alpha_{1}}+q^{-2}\Bigl\{ [2]_q\mathcal{E}_{k\delta+\alpha_{1}}-\mathcal{E}_{(k-1)\delta+\alpha_{1}}\mathcal{E}_{\delta}\Bigr\}\mathcal{E}_{\delta+\alpha_{1}}
\\
&\quad+(-1)^k\Bigl\{[2]_q\mathcal{E}_{2\delta+\alpha_{1}}-\mathcal{E}_{\delta+\alpha_{1}}\mathcal{E}_{\delta}\Bigr\}\mathcal{E}_{(k-1)\delta+\alpha_{1}}+[2]_q\mathcal{E}_{k\delta+\alpha_{1}} \mathcal{E}_{\delta+\alpha_{1}}
\\
&\quad-\Bigl\{(-1)^{k-1} q^{-2}e_1\mathcal{E}_{k\delta+\alpha_{1}}+q^{-2}\mathcal{E}_{(k-1)\delta+\alpha_{1}}\mathcal{E}_{\delta+\alpha_{1}}+(-1)^k\mathcal{E}_{\delta+\alpha_{1}}\mathcal{E}_{(k-1)\delta+\alpha_{1}}\Bigr\}\mathcal{E}_{\delta}
\end{align*}
\begin{align*}
&=[2]_q\Bigl\{(-1)^{k}q^{-2}e_{1}\mathcal{E}_{(k+1)\delta+\alpha_{1}}+q^{-2}\mathcal{E}_{k\delta+\alpha_{1}}\mathcal{E}_{\delta+\alpha_{1}}+(-1)^{k+1}\mathcal{E}_{\delta+\alpha_{1}}\mathcal{E}_{k\delta+\alpha_{1}}\\
&\quad+\Bigl(\mathcal{E}_{k\delta+\alpha_{1}}\mathcal{E}_{\delta+\alpha_1}+(-1)^{k+1}q^{-2}\mathcal{E}_{\delta+\alpha_1}\mathcal{E}_{k\delta+\alpha_{1}}-q^{-2}\mathcal{E}_{(k-1)\delta+\alpha_{1}}\mathcal{E}_{2\delta+\alpha_{1}}\\
&\quad+(-1)^{k}\mathcal{E}_{2\delta+\alpha_{1}}\mathcal{E}_{(k-1)\delta+\alpha_{1}}\Bigr)\Bigr\}\\
&=[2]_q\Bigl\{(-1)^{k}q^{-2}e_{1}\mathcal{E}_{(k+1)\delta+\alpha_{1}}+q^{-2}\mathcal{E}_{k\delta+\alpha_{1}}\mathcal{E}_{\delta+\alpha_{1}}+(-1)^{k+1}\mathcal{E}_{\delta+\alpha_{1}}\mathcal{E}_{k\delta+\alpha_{1}}\\
&\quad+(\mathcal{T}_1\Phi)\Bigl(\mathcal{E}_{(k-1)\delta+\alpha_{1}}e_1+(-1)^{k+1}q^{-2}e_1\mathcal{E}_{(k-1)\delta+\alpha_{1}}-q^{-2}\mathcal{E}_{(k-2)\delta+\alpha_{1}}\mathcal{E}_{\delta+\alpha_{1}}\\ 
&\quad+(-1)^{k}\mathcal{E}_{\delta+\alpha_{1}}\mathcal{E}_{(k-2)\delta+\alpha_{1}}\Bigr)\Bigr\}.
\end{align*}
We ultimately obtain  
\begin{align*}
\mathcal{E}_{(k+1)\delta+\alpha_{1}}e_{1}+(-1)^{k+1}q^{-2}e_{1}\mathcal{E}_{(k+1)\delta+\alpha_{1}}
=q^{-2}\mathcal{E}_{k\delta+\alpha_{1}}\mathcal{E}_{\delta+\alpha_{1}}+(-1)^{k+1}\mathcal{E}_{\delta+\alpha_{1}}\mathcal{E}_{k\delta+\alpha_{1}}.
\end{align*}

Thus, relation (3.1) holds for all \(n > 0\).

(II) When \(m=t\), \(n>t>0\), we assume the lemma holds for \(m=t\):
\begin{align}
\mathcal{E}_{n\delta+\alpha_1} \mathcal{E}_{t\delta+\alpha_1} + (-1)^{n+t}& q^{-2} \mathcal{E}_{t\delta+\alpha_1} \mathcal{E}_{n\delta+\alpha_1} \nonumber\\
&= q^{-2} \mathcal{E}_{(n-1)\delta+\alpha_1} \mathcal{E}_{(t+1)\delta+\alpha_1} + (-1)^{n+t} \mathcal{E}_{(t+1)\delta+\alpha_1} \mathcal{E}_{(n-1)\delta+\alpha_1}.
\end{align}
We claim that the relation holds for \(m=t+1\).

To prove the claim, applying the automorphism \(\mathcal{T}_1\Phi\) to relation (3.4), we obtain
\begin{align}
\mathcal{E}_{(n+1)\delta+\alpha_1} \mathcal{E}_{(t+1)\delta+\alpha_1} + &(-1)^{n+t}q^{-2} \mathcal{E}_{(t+1)\delta+\alpha_1} \mathcal{E}_{(n+1)\delta+\alpha_1} \nonumber\\
&\quad = q^{-2} \mathcal{E}_{n\delta+\alpha_1}\mathcal{E}_{(t+2)\delta+\alpha_1} + (-1)^{n+t} \mathcal{E}_{(t+2)\delta+\alpha_1} \mathcal{E}_{n\delta+\alpha_1}.
\end{align}

We set \(n' = n+1\) and \(t' = t+1\). Since \(n > t\) implies \(n' > t'\), relation (3.5) simplifies to
\[
\begin{aligned}
\mathcal{E}_{n'\delta+\alpha_1} \mathcal{E}_{t'\delta+\alpha_1} + &(-1)^{n'+t'-1} q^{-2} \mathcal{E}_{t'\delta+\alpha_1} \mathcal{E}_{n'\delta+\alpha_1} \\
&\quad = q^{-2} \mathcal{E}_{(n'-1)\delta+\alpha_1} \mathcal{E}_{(t'+1)\delta+\alpha_1} + (-1)^{n'+t'-1} \mathcal{E}_{(t'+1)\delta+\alpha_1} \mathcal{E}_{(n'-1)\delta+\alpha_1},
\end{aligned}
\]
which matches the relation in the lemma for \(m = t' = t+1\), confirming the claim.

(III)
By the base case \(m = 0\), and inductive step, up to \(m = t+1\), the lemma holds.
\end{proof}

\begin{lemma} 
For \(n,m\in\mathbb{Z}_{\ge 0}\), \(n\le m\), the commutation relations between \(\mathcal{E}_{n\delta+\alpha_{2}}\) and \(\mathcal{E}_{m\delta+\alpha_{2}}\) are 
\begin{align*}  
\mathcal{E}_{n\delta+\alpha_{2}}&\mathcal{E}_{m\delta+\alpha_{2}}+(-1)^{n+m}q^{-2}\mathcal{E}_{m\delta+\alpha_{2}}\mathcal{E}_{n\delta+\alpha_{2}}\\ &=q^{-2}\mathcal{E}_{(n+1)\delta+\alpha_{2}}\mathcal{E}_{(m-1)\delta+\alpha_{2}}+(-1)^{n+m}\mathcal{E}_{(m-1)\delta+\alpha_{2}}\mathcal{E}_{(n+1)\delta+\alpha_{2}}. 
\end{align*}
\end{lemma}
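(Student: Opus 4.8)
The plan is to deduce Lemma~3.8 from the already-established Lemma~3.7 by transporting it through an order-reversing symmetry that interchanges the two simple-root towers. Concretely, I would introduce the $\mathbb{C}(q)$-linear algebra \emph{anti}-automorphism $\sigma$ of $U_{\mathbf q}(\widehat{\mathfrak{sl}}_2)$ determined on generators by
\[
\sigma(e_1)=e_2,\ \ \sigma(e_2)=e_1,\ \ \sigma(f_1)=f_2,\ \ \sigma(f_2)=f_1,\ \ \sigma(k_i)=k_j^{-1}\ (i\neq j),\ \ \sigma(q)=q,
\]
extended by $\sigma(ab)=\sigma(b)\sigma(a)$. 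In contrast to $\Omega$, this map fixes $q$ and swaps $e_1\leftrightarrow e_2$ rather than sending $e\mapsto f$, which is exactly what is needed to move $\mathcal{E}_{\bullet+\alpha_1}$-relations to $\mathcal{E}_{\bullet+\alpha_2}$-relations. The first step is to verify that $\sigma$ is well defined, i.e.\ that it respects (A1)--(A4). Relations (A1)--(A3) reduce, after reversing products and interchanging the indices, to the same relations with $1$ and $2$ swapped; the sign conventions of (A2) are preserved, and the choice $\sigma(k_i)=k_j^{-1}$ is forced by (A3). The only substantial point is (A4): applying $\sigma$ to the $q$-Serre relation for $(i,j)$ reverses each monomial and swaps indices, and since $\sigma$ fixes $q$ the coefficients are untouched, so the image is precisely the $q$-Serre relation for the ordered pair $(\nu(i),\nu(j))$, which is again one of the imposed relations. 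I expect this verification to be the main (though routine) obstacle, since everything downstream rests on $\sigma$ existing.

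Next I would identify the $\sigma$-image of each real-root tower. Since $\mathcal{E}_{\delta}=e_1e_2-q^{-2}e_2e_1$, reversing and swapping gives $\sigma(\mathcal{E}_{\delta})=\sigma(e_2)\sigma(e_1)-q^{-2}\sigma(e_1)\sigma(e_2)=e_1e_2-q^{-2}e_2e_1=\mathcal{E}_{\delta}$, so $\sigma$ fixes the basic imaginary root vector. I then claim $\sigma(\mathcal{E}_{n\delta+\alpha_1})=\mathcal{E}_{n\delta+\alpha_2}$ for all $n\ge 0$, proved by induction on $n$. The base case is $\sigma(e_1)=e_2$. For the inductive step I apply $\sigma$ to the first recursion of Theorem~2.10,
\[
[2]_q\,\mathcal{E}_{(n+1)\delta+\alpha_1}=\mathcal{E}_{\delta}\mathcal{E}_{n\delta+\alpha_1}+\mathcal{E}_{n\delta+\alpha_1}\mathcal{E}_{\delta};
\]
because $\sigma$ fixes $q$ and $\mathcal{E}_{\delta}$, reverses products, and (by the inductive hypothesis) sends $\mathcal{E}_{n\delta+\alpha_1}\mapsto\mathcal{E}_{n\delta+\alpha_2}$, the image is exactly $[2]_q\,\sigma(\mathcal{E}_{(n+1)\delta+\alpha_1})=\mathcal{E}_{\delta}\mathcal{E}_{n\delta+\alpha_2}+\mathcal{E}_{n\delta+\alpha_2}\mathcal{E}_{\delta}$, which by the \emph{second} recursion of Theorem~2.10 equals $[2]_q\,\mathcal{E}_{(n+1)\delta+\alpha_2}$. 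Hence $\sigma(\mathcal{E}_{(n+1)\delta+\alpha_1})=\mathcal{E}_{(n+1)\delta+\alpha_2}$, closing the induction. (All recursions here hold in $U_{\mathbf q}(\widehat{\mathfrak{sl}}_2)$ and $\sigma$ is globally defined, so no positivity statement is needed.)

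Finally I would apply $\sigma$ to the identity of Lemma~3.7, valid for $n\ge m$. Using anti-multiplicativity together with $\sigma(\mathcal{E}_{k\delta+\alpha_1})=\mathcal{E}_{k\delta+\alpha_2}$, every product reverses and every index $1$ becomes $2$, yielding for $n\ge m$
\[
\mathcal{E}_{m\delta+\alpha_2}\mathcal{E}_{n\delta+\alpha_2}+(-1)^{n+m}q^{-2}\mathcal{E}_{n\delta+\alpha_2}\mathcal{E}_{m\delta+\alpha_2}
=q^{-2}\mathcal{E}_{(m+1)\delta+\alpha_2}\mathcal{E}_{(n-1)\delta+\alpha_2}+(-1)^{n+m}\mathcal{E}_{(n-1)\delta+\alpha_2}\mathcal{E}_{(m+1)\delta+\alpha_2}.
\]
Relabelling $(n,m)\mapsto(m,n)$ turns the constraint $n\ge m$ into $n\le m$ and produces exactly the asserted identity of Lemma~3.8. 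As an independent check, and as a fallback should the construction of $\sigma$ prove delicate, one can instead mirror the double induction of Lemma~3.7 verbatim, replacing the shift automorphism $\mathcal{T}_1\Phi$ by its inverse $\mathcal{T}_2^{-1}\Phi$ (which raises the $\alpha_2$-index), with $e_2$ playing the role of $e_1$ and the induction run so that the two indices move together from the regime $n\le m$; the base case and the inductive mechanics are identical in form to parts (I)--(III) of the proof of Lemma~3.7.
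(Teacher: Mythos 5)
Your proposal is correct, but it takes a genuinely different route from the paper, whose entire proof of this lemma is the single line ``Similar to the proof of Lemma 3.7'' --- i.e.\ the authors intend the reader to rerun the double induction of Lemma 3.7 with the $\alpha_2$-tower in place of the $\alpha_1$-tower (essentially your fallback option, with $\mathcal{T}_2^{-1}\Phi$ raising the index). Your primary argument instead manufactures the index-swapping, product-reversing anti-automorphism $\sigma$ and transports Lemma 3.7 across it; I checked the key points and they hold: $\sigma$ respects (A1)--(A4) (the sign $-q^{2}$ in (A2) and the palindromic shape of the $q$-Serre relation are exactly what make this work, and $\sigma(k_i)=k_j^{-1}$ is indeed forced by (A3)); $\sigma$ fixes $\mathcal{E}_\delta=e_1e_2-q^{-2}e_2e_1$; and since the recursions of Theorem 2.10 are anticommutators with $\mathcal{E}_\delta$ and $[2]_q\neq 0$, the induction giving $\sigma(\mathcal{E}_{n\delta+\alpha_1})=\mathcal{E}_{n\delta+\alpha_2}$ closes, after which reversal plus the relabelling $(n,m)\mapsto(m,n)$ yields precisely the stated identity with $n\le m$. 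What your approach buys is a one-line derivation of Lemma 3.8 from Lemma 3.7 with no new computation, plus a reusable symmetry that explains why every ``$\alpha_1$-statement'' in this section has an ``$\alpha_2$-mirror''; what it costs is the up-front verification that $\sigma$ is well defined (it is not a composite of the paper's $\Omega$ and $\Phi$, since those invert $q$), which is roughly the same amount of routine checking the paper implicitly delegates to the reader. Two small points worth flagging in a write-up: (i) you should note explicitly that $\sigma$ is only an algebra (not Hopf) anti-automorphism, which is all you need; (ii) the boundary case $n=m=0$ makes the right-hand side reference $\mathcal{E}_{-\delta+\alpha_2}$, but this defect is already present in the paper's own statements of Lemmas 3.7 and 3.8, so it is not introduced by your argument.
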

\begin{proof} 
Similar to the proof of Lemma 3.7.
\end{proof}

\begin{lemma} 
$[\mathcal{E}_{m\delta},\mathcal{E}_{n\delta}]=0,$ for any $n,m\in\mathbb{Z}_+$.
\end{lemma}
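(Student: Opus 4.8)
The plan is to argue by induction on $m$, using antisymmetry of the bracket to assume $1\le m\le n$ (the diagonal case $m=n$ being trivial). The base case $m=1$, i.e. $[\mathcal{E}_\delta,\mathcal{E}_{n\delta}]=0$ for all $n$, is immediate: condition $(2)$ of Proposition~2.12 is \emph{verbatim} the identity of Corollary~2.11, which holds for all $n,m\in\mathbb{Z}_{\ge 0}$, and Proposition~2.12 gives $(2)\Rightarrow(1)$. So the induction starts, and throughout I may use freely that $\mathcal{E}_\delta$ commutes with every $\mathcal{E}_{j\delta}$.

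For the inductive step, assume $[\mathcal{E}_{j\delta},\mathcal{E}_{n'\delta}]=0$ whenever $j<m$ (all $n'$), and compute $[\mathcal{E}_{m\delta},\mathcal{E}_{n\delta}]$. I would expand \emph{both} imaginary root vectors through Corollary~2.11 into products of real root vectors terminating in the same generator, namely using the $\alpha_1$-tower
\[
\mathcal{E}_{m\delta}=\mathcal{E}_{(m-1)\delta+\alpha_1}e_2+(-1)^m q^{-2}e_2\,\mathcal{E}_{(m-1)\delta+\alpha_1},\qquad
\mathcal{E}_{n\delta}=\mathcal{E}_{(n-1)\delta+\alpha_1}e_2+(-1)^n q^{-2}e_2\,\mathcal{E}_{(n-1)\delta+\alpha_1}.
\]
After substituting these, the bracket $[\mathcal{E}_{m\delta},\mathcal{E}_{n\delta}]$ breaks into pieces controlled by three ingredients: the same-colour relation of Lemma~3.7 for the $\mathcal{E}_{\bullet\delta+\alpha_1}$ reorderings, the fact that the two copies of $e_2$ commute, and the specialization ($b=0$) of Corollary~2.11, $\mathcal{E}_{p\delta+\alpha_1}e_2=\mathcal{E}_{(p+1)\delta}+(-1)^p q^{-2}e_2\,\mathcal{E}_{p\delta+\alpha_1}$, which lets me move each $e_2$ past an $\alpha_1$-root vector at the cost of producing a \emph{lower} imaginary root vector $\mathcal{E}_{(p+1)\delta}$. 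Normal-ordering everything into the shape $(\alpha_1\text{-vectors})\cdot e_2\cdot e_2$ then collects two kinds of leftover: same-colour commutator corrections, which cancel by the precise form of Lemma~3.7, and products of a lower imaginary root vector $\mathcal{E}_{j\delta}$ ($j<m$) with a real root vector, whose mutual brackets collapse by the inductive hypothesis. I would run the mirror computation with the $\alpha_2$-tower (using Lemma~3.8 and the $a=0$ specialization $e_1\mathcal{E}_{p\delta+\alpha_2}=\dots$) as a cross-check, since the two normal orderings must agree.

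The main obstacle will be the sign bookkeeping intrinsic to this sign-deformed algebra: the factors $(-1)^{n+m}$ appearing in Corollary~2.11 and in Lemmas~3.7–3.8, combined with the anticommutation $k_1k_2\,\mathcal{E}_{p\delta+\alpha_1}=-\mathcal{E}_{p\delta+\alpha_1}\,k_1k_2$ forced by $(A2)$, mean that the reorderings are \emph{sign-twisted} rather than ordinary commutations, and every twist must be tracked consistently through the expansion. The second genuine difficulty is that moving the $e_2$'s past the $\alpha_1$-tower spawns a cascade of terms $\mathcal{E}_{j\delta}\cdot(\text{real vector})$ with $1\le j<m$; showing that this entire cascade cancels requires matching the coefficients produced by Lemma~3.7 against those from the $b=0$ recursion, and it is exactly here that the inductive hypothesis $[\mathcal{E}_{j\delta},\mathcal{E}_{\bullet\delta}]=0$ for $j<m$ is consumed. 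Once these cancellations are verified, the residual expression for $[\mathcal{E}_{m\delta},\mathcal{E}_{n\delta}]$ is identically zero, completing the induction.
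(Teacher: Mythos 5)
Your base case is sound and is in fact a neat observation: Corollary~2.11 is stated unconditionally for all $n,m\in\mathbb{Z}_{\ge 0}$, so condition (2) of Proposition~2.12 holds and the stated equivalence immediately yields $[\mathcal{E}_{\delta},\mathcal{E}_{n\delta}]=0$ for all $n$. Be aware, though, that the paper itself gives no argument for this lemma --- its ``proof'' is a pointer to Proposition~2.1 of \cite{HZhuang} --- so you are not paralleling or diverging from an in-paper proof; you are attempting to supply one that the authors outsource.

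The difficulty is that your inductive step is a plan rather than a proof, and the gap sits exactly where you yourself flag ``the second genuine difficulty.'' Two concrete problems. First, after moving the $e_2$'s through the $\alpha_1$-tower via $\mathcal{E}_{p\delta+\alpha_1}e_2=\mathcal{E}_{(p+1)\delta}+(-1)^{p}q^{-2}e_2\,\mathcal{E}_{p\delta+\alpha_1}$, the leftover terms have the form $\mathcal{E}_{j\delta}\cdot(\text{real root vector})$ and $(\text{real root vector})\cdot\mathcal{E}_{j\delta}$ with $j<m$; your inductive hypothesis controls only commutators of two \emph{imaginary} root vectors, so it cannot by itself ``collapse'' these mixed terms. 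To push $\mathcal{E}_{j\delta}$ past $e_2$ or past an $\mathcal{E}_{p\delta+\alpha_1}$ you need a straightening rule of the type of equation (3.19), namely $\mathcal{E}_{j\delta}e_1=(-1)^{j}e_1\mathcal{E}_{j\delta}+\sum_{k}b^{(j)}_{k}\mathcal{E}_{k\delta+\alpha_1}\mathcal{E}_{(j-k)\delta}+b^{(j)}_{j}\mathcal{E}_{j\delta+\alpha_1}$, which spawns its own cascade of coefficients and is itself only quoted from an external source later in the paper, not proved. Second, the asserted cancellation between the Lemma~3.7 corrections and the recursion corrections \emph{is} the content of the lemma; you state that ``once these cancellations are verified'' the bracket vanishes, but you exhibit no coefficient identity. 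Until that bookkeeping is actually carried out (or the statement is, as the authors do, delegated to the computation in \cite{HZhuang}), the inductive step is incomplete and the proposal does not constitute a proof.
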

\begin{proof}
See that of Proposition 2.1 in \cite{HZhuang}.
\end{proof}

\begin{lemma} 
For $n,m\in\mathbb{Z}_{\ge 0}$, $n>m$, the commutation relations between $\mathcal{E}_{n\delta+\alpha_i}$ and $\mathcal{F}_{m\delta+\alpha_j}\,(i,j\in\{1,2\})$ are 
\begin{align}  
&[\mathcal{E}_{m\delta+\alpha_1},\mathcal{F}_{m\delta+\alpha_1}]=\dfrac{k^{m+1}_1k^m_2- k^{-m-1}_1k^{-m}_2}{q - q^{-1}},\\ &[\mathcal{E}_{m\delta+\alpha_2},\mathcal{F}_{m\delta+\alpha_2}]=\dfrac{k^{m+1}_2k^m_1- k^{-m-1}_2k^{-m}_1}{q - q^{-1}},\\
&[\mathcal{E}_{n\delta+\alpha_1},\mathcal{F}_{m\delta+\alpha_1}] =-\mathcal{E}_{(n-m)\delta}k^{-m-1}_1k^{-m}_2,\\
&[\mathcal{E}_{n\delta+\alpha_2},\mathcal{F}_{m\delta+\alpha_2}] =k^{m}_1k^{m+1}_2\mathcal{E}_{(n-m)\delta}.
\end{align}  
\end{lemma}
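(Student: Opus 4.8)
\emph{Strategy.} The plan is to obtain all four identities by transporting a few base commutators along the algebra automorphism \(\sigma:=\mathcal{T}_1\Phi\). Two structural facts drive this. First, the identities recorded just before Lemma 3.7 give \(\mathcal{E}_{n\delta+\alpha_1}=\sigma^n(e_1)\) and \(\mathcal{F}_{n\delta+\alpha_1}=\sigma^n(f_1)\); combining \(\mathcal{T}_1\Phi=\Phi\mathcal{T}_2\) with \(\Phi^2=\mathrm{id}\) gives \(\sigma^{-1}=\mathcal{T}_2^{-1}\Phi\), so the defining formula for \(\mathcal{E}_{n\delta+\alpha_2}\) becomes \(\mathcal{E}_{n\delta+\alpha_2}=\sigma^{-n}(e_2)\), and \(\Omega\sigma=\sigma\Omega\) (from \(\Omega\mathcal{T}_i=\mathcal{T}_i\Omega\) and \(\Phi\Omega=\Omega\Phi\)) yields \(\mathcal{F}_{n\delta+\alpha_2}=\sigma^{-n}(f_2)\). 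Second, the defining formulas for \(\mathcal{T}_1\) give \(\sigma(k_1)=k_1^2k_2\) and \(\sigma(k_2)=k_1^{-1}\), whence by induction \(\sigma^m(k_1)=k_1^{m+1}k_2^m\) and \(\sigma^{-m}(k_2)=k_1^mk_2^{m+1}\).

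With these in hand, identities \((3.6)\) and \((3.7)\) are immediate: applying \(\sigma^m\) (resp.\ \(\sigma^{-m}\)) to the cross relation \((A3)\) in the form \([e_1,f_1]=\tfrac{k_1-k_1^{-1}}{q-q^{-1}}\) (resp.\ \([e_2,f_2]=\tfrac{k_2-k_2^{-1}}{q-q^{-1}}\)) and substituting the torus images above produces precisely the stated right-hand sides.

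For the off-diagonal identities I would first reduce to \(m=0\). Because \(\sigma\) is an automorphism and \(n>m\), one has \([\mathcal{E}_{n\delta+\alpha_1},\mathcal{F}_{m\delta+\alpha_1}]=\sigma^m\!\big([\mathcal{E}_{(n-m)\delta+\alpha_1},f_1]\big)\), so \((3.8)\) rests entirely on the base relation \([\mathcal{E}_{N\delta+\alpha_1},f_1]=-\mathcal{E}_{N\delta}k_1^{-1}\) for \(N\ge1\) (and \((3.9)\) on \([\mathcal{E}_{N\delta+\alpha_2},f_2]=k_2\mathcal{E}_{N\delta}\)). I would prove the base relation by induction on \(N\). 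The case \(N=1\) is a direct computation: expanding \(\mathcal{E}_{\delta+\alpha_1}=\mathcal{T}_1(e_2)\) and commuting \(f_1\) through by means of \((A2)\), \((A3)\) and \([e_2,f_1]=0\), the result collapses to \(-(e_1e_2-q^{-2}e_2e_1)k_1^{-1}=-\mathcal{E}_\delta k_1^{-1}\). For the inductive step I would insert the recursion \([2]_q\mathcal{E}_{(N+1)\delta+\alpha_1}=\mathcal{E}_\delta\mathcal{E}_{N\delta+\alpha_1}+\mathcal{E}_{N\delta+\alpha_1}\mathcal{E}_\delta\) of Theorem 2.10 into the bracket with \(f_1\) and apply the Leibniz rule, using the auxiliary value \([\mathcal{E}_\delta,f_1]=-[2]_qe_2k_1^{-1}\) and the commutativity \([\mathcal{E}_\delta,\mathcal{E}_{N\delta}]=0\) from Lemma 3.9. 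Pushing every \(k_1^{-1}\) to the right by \((A2)\), the two contributions proportional to \(\mathcal{E}_\delta\mathcal{E}_{N\delta}\) cancel, and the remaining terms are repackaged by Corollary 2.11 (with \((n,m)=(N,0)\)) into \(-[2]_q\mathcal{E}_{(N+1)\delta}k_1^{-1}\), closing the induction.

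Finally I would transport the base relation back out to arbitrary \(m\), where the one genuinely delicate ingredient is that \(\sigma\) fixes every imaginary root vector, \(\sigma(\mathcal{E}_{N\delta})=\mathcal{E}_{N\delta}\) for \(N\ge1\). For \(N\ge2\) I write \(N=n+m+1\) with \(m\ge1\) and apply \(\sigma\) to the expression for \(\mathcal{E}_{N\delta}\) in Corollary 2.11; since \(\sigma\) sends the pair \((\mathcal{E}_{n\delta+\alpha_1},\mathcal{E}_{m\delta+\alpha_2})\) to \((\mathcal{E}_{(n+1)\delta+\alpha_1},\mathcal{E}_{(m-1)\delta+\alpha_2})\), the right-hand side of Corollary 2.11 is unchanged, giving \(\sigma(\mathcal{E}_{N\delta})=\mathcal{E}_{N\delta}\); the case \(N=1\) is equivalent to the \(N=1\) base relation just established (one checks \(\sigma(\mathcal{E}_\delta)=-[\mathcal{E}_{\delta+\alpha_1},f_1]k_1\)). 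Granting this, \(\sigma^m(\mathcal{E}_{(n-m)\delta})=\mathcal{E}_{(n-m)\delta}\) and \(\sigma^m(k_1^{-1})=k_1^{-m-1}k_2^{-m}\) turn \(\sigma^m\!\big([\mathcal{E}_{(n-m)\delta+\alpha_1},f_1]\big)\) into the right-hand side of \((3.8)\), and relation \((3.9)\) follows by the same method using \(\sigma^{-m}\), the triple \(e_2,f_2,\alpha_2\), and the second recursion of Theorem 2.10. I expect the principal obstacle to be the \(N=1\) direct computation together with the weight bookkeeping in the inductive step that forces the \(\mathcal{E}_\delta\mathcal{E}_{N\delta}\) terms to cancel and lets Corollary 2.11 reassemble the remainder.
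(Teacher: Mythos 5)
Your proposal is correct and follows essentially the same route as the paper: it obtains (3.6)--(3.7) by transporting the cross relation (A3) along powers of \(\mathcal{T}_1\Phi\), and (3.8)--(3.9) by first proving the base commutator \([\mathcal{E}_{N\delta+\alpha_1},f_1]=-\mathcal{E}_{N\delta}k_1^{-1}\) by induction on \(N\) via the recursion of Theorem 2.10 and the auxiliary relation \([\mathcal{E}_\delta,f_1]=-[2]_q e_2k_1^{-1}\), then applying \((\mathcal{T}_1\Phi)^{\pm m}\). The only difference is that you explicitly verify the \((\mathcal{T}_1\Phi)\)-invariance of the imaginary root vectors \(\mathcal{E}_{N\delta}\) via Corollary 2.11, a step the paper uses tacitly when it writes \((\mathcal{T}_1\Phi)^m\bigl([\mathcal{E}_{(n-m)\delta+\alpha_1},f_1]\bigr)=-\mathcal{E}_{(n-m)\delta}k_1^{-m-1}k_2^{-m}\); this is a welcome clarification rather than a divergence.
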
 
\begin{proof}
We proceed to verify equations (3.6)---(3.9).

(I) Verify equation (3.6). Applying \((\mathcal{T}_1\Phi)^n\) to  \([e_1, f_1]\) gives
\begin{align}
&[\mathcal{E}_{n\delta+\alpha_1}, \mathcal{F}_{n\delta+\alpha_1}] = (\mathcal{T}_1\Phi)^n\Bigl([e_1, f_1]\Bigr)=(\mathcal{T}_1\Phi)^n\Bigl(\dfrac{k_1 - k_1^{-1}}{q - q^{-1}}\Bigr).
\end{align}
Under the automorphism \((\mathcal{T}_1\Phi)^n\), the generator \(k_1\) transforms as \(k_1 \mapsto k_1^nk_2\). Thus,
\begin{align}
(\mathcal{T}_1\Phi)^n(k_1) = k_1^{n+1}k_2^n, \qquad (\mathcal{T}_1\Phi)^n(k_1^{-1}) = k_1^{-n-1}k_2^{-n}.
\end{align}
Substituting (3.11) into (3.10), we obtain
\begin{align*}
&[\mathcal{E}_{n\delta+\alpha_1}, \mathcal{F}_{n\delta+\alpha_1}] = \dfrac{k_1^{n+1}k_2^n - k_1^{-n-1}k_2^{-n}}{q - q^{-1}}.
\end{align*}
The proof of (3.7) is analogous to that of (3.6).

(II) Verify equation (3.8).
For \(n > m\), we first establish an intermediate claim
\begin{align}
[\mathcal{E}_{n\delta+\alpha_1}, f_1] = -\mathcal{E}_{n\delta}k_1^{-1},
\end{align}
where \(n\ge 1\).

When \(n = 1\), using \(\mathcal{E}_{\delta+\alpha_1} = (\mathcal{T}_1\Phi)(e_1)\) and \(\mathcal{T}_1(e_1) = -f_1k_1\), direct computation gives
\[
[\mathcal{E}_{\delta+\alpha_1}, f_1] = -[(\mathcal{T}_1\Phi)(e_1), \mathcal{T}_1(e_1k_1)] =  -\mathcal{T}_1[e_2, e_1k_1]=
-\mathcal{E}_{\delta}k_1^{-1},
\]
which confirms the claim to hold for \(n = 1\).

Assume the claim holds for \(n = k\). To prove that (3.12) holds for \(n = k+1\), we need the following equalities
\begin{align}
\mathcal{E}_{\delta}f_1 &= f_1\mathcal{E}_{\delta} - [2]_qe_2k_1^{-1},\\
[2]_q\mathcal{E}_{n\delta+\alpha_1} &= \mathcal{E}_{\delta}\mathcal{E}_{(n-1)\delta+\alpha_1} + \mathcal{E}_{(n-1)\delta+\alpha_1}\mathcal{E}_{\delta}.
\end{align}
By the induction hypothesis, we have 
\begin{align}
\mathcal{E}_{(n-1)\delta+\alpha_1}f_1 = f_1\mathcal{E}_{(n-1)\delta+\alpha_1} - \mathcal{E}_{(n-1)\delta}k_1^{-1}.
\end{align}

Substituting equations (3.13), (3.14) and (3.15) into the following calculation yields
\begin{align*}
[2]_q\mathcal{E}_{n\delta+\alpha_1}f_1&=\Bigl\{\mathcal{E}_{\delta}\mathcal{E}_{(n-1)\delta+\alpha_1} + \mathcal{E}_{(n-1)\delta+\alpha_1}\mathcal{E}_{\delta}\Bigr\}f_1\\
&=\mathcal{E}_{\delta}\Bigl(f_1\mathcal{E}_{(n-1)\delta+\alpha_1} - \mathcal{E}_{(n-1)\delta}k_1^{-1}\Bigr) + \mathcal{E}_{(n-1)\delta+\alpha_1}\Bigl(f_1\mathcal{E}_{\delta} - [2]_qe_2k_1^{-1}\Bigr)\\
&= \Bigl(f_1\mathcal{E}_{\delta} - [2]_qe_2k_1^{-1}\Bigr)\mathcal{E}_{(n-1)\delta+\alpha_1} - \mathcal{E}_{\delta}\mathcal{E}_{(n-1)\delta}k_1^{-1} \\
&\quad+ \Bigl(f_1\mathcal{E}_{(n-1)\delta+\alpha_1} - \mathcal{E}_{(n-1)\delta}k_1^{-1}\Bigr)\mathcal{E}_{\delta} - [2]_q\mathcal{E}_{(n-1)\delta+\alpha_1}e_2k_1^{-1} \\
&= [2]_qf_1\mathcal{E}_{(n+1)\delta+\alpha_1} - [2]_q\Bigl\{(-1)^n q^{-2}e_2\mathcal{E}_{(n-1)\delta+\alpha_1}+ \mathcal{E}_{(n-1)\delta+\alpha_1}e_2\Bigr\}k_1^{-1}.
\end{align*}

By the definition of \(\mathcal{E}_{n\delta}\), we obtain
\[
\mathcal{E}_{n\delta+\alpha_1}f_1 - f_1\mathcal{E}_{n\delta+\alpha_1} = -\mathcal{E}_{n\delta}k_1^{-1}.
\]
Thus, the claim holds for all \(n \ge 1\) by induction.

To extend to the case \(n > m\), we iterate the intermediate claim (3.12). We get
\[
[\mathcal{E}_{n\delta+\alpha_1}, \mathcal{F}_{m\delta+\alpha_1}] = \Bigl(\mathcal{T}_1\Phi\Bigr)^m\Bigl([\mathcal{E}_{(n-m)\delta+\alpha_1}, f_1]\Bigr)= -\mathcal{E}_{(n-m)\delta}k_1^{-m-1}k_2^{-m},
\]
which confirms equation (3.16). 

The proof for equation (3.9) follows similarly.
\end{proof}

\begin{lemma} 
For \(n\in\mathbb{Z}_{+}\), \(m\in\mathbb{Z}_{\ge 0}\), the commutation relations between \(\Psi(a(n))\) and \(\mathcal{E}_{m\delta+\alpha_i}\,(i\in\{1,2\})\) are  
\begin{align}
&[\Psi(a(n)), e_{1}]  = -\dfrac{[2n]_q}{n} \mathcal{E}_{n\delta+\alpha_1}(k_1k_2)^n,\\
&[\Psi(a(n)), e_2]  =(k_1k_2)^n\dfrac{[2n]_q}{n} \mathcal{E}_{n\delta+\alpha_2}.
\end{align}   
\end{lemma}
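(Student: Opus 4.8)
The plan is to reduce the general statement to the case $m=0$ (the two displayed identities) and then prove $m=0$ by induction on $n$. For the reduction, write $K:=k_1k_2$ and note that, by the recursion in Theorem 3.5, $\Psi(a(n))$ is a $K$-coefficient polynomial in the imaginary vectors $\mathcal{E}_{\delta},\dots,\mathcal{E}_{n\delta}$; since all imaginary vectors commute (Lemma 3.9) and $K$ commutes with each $\mathcal{E}_{j\delta}$, we get $[\Psi(a(n)),\mathcal{E}_{\delta}]=0$. Granting the $m=0$ identity $[\Psi(a(n)),e_1]=-\tfrac{[2n]_q}{n}\mathcal{E}_{n\delta+\alpha_1}K^{n}$, I would induct on $m$ using $[2]_q\mathcal{E}_{(m+1)\delta+\alpha_1}=\mathcal{E}_{\delta}\mathcal{E}_{m\delta+\alpha_1}+\mathcal{E}_{m\delta+\alpha_1}\mathcal{E}_{\delta}$ (Theorem 2.10, in the form (3.2)). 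Applying $[\Psi(a(n)),-]$ and the Leibniz rule, the term $[\Psi(a(n)),\mathcal{E}_{\delta}]$ drops out, the induction hypothesis is inserted, and after commuting $K^{n}$ past $\mathcal{E}_{\delta}$ one finds $[\Psi(a(n)),\mathcal{E}_{(m+1)\delta+\alpha_1}]=-\tfrac{[2n]_q}{n[2]_q}\bigl(\mathcal{E}_{\delta}\mathcal{E}_{(n+m)\delta+\alpha_1}+\mathcal{E}_{(n+m)\delta+\alpha_1}\mathcal{E}_{\delta}\bigr)K^{n}$, which collapses by (3.2) to $-\tfrac{[2n]_q}{n}\mathcal{E}_{(n+m+1)\delta+\alpha_1}K^{n}$. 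The $\alpha_2$ identity is handled identically, using the $\alpha_2$-tower analogue of (3.2) coming from the second line of Theorem 2.10 and keeping $K^n$ on the left.

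Before the $m=0$ induction I would record three commutation facts, each immediate from (A2) and a weight count: $K$ anticommutes with $e_1$ and $e_2$; $K$ commutes with every $\mathcal{E}_{j\delta}$ (weight $j\alpha_1+j\alpha_2$); and $K$ anticommutes with every real vector $\mathcal{E}_{j\delta+\alpha_i}$. The first two together turn $[K^{m}\cdot,\,e_1]$ into $K^{m}$ times the $(-1)^m$-twisted commutator $\tilde b_m:=\mathcal{E}_{m\delta}e_1-(-1)^m e_1\mathcal{E}_{m\delta}$, which is the source of the sign in the final formula. For the base case $n=1$ the recursion gives $\Psi(a(1))=K\mathcal{E}_{\delta}$ (empty sum), whence $[\Psi(a(1)),e_1]=K(\mathcal{E}_{\delta}e_1+e_1\mathcal{E}_{\delta})=[2]_q K\mathcal{E}_{\delta+\alpha_1}=-[2]_q\mathcal{E}_{\delta+\alpha_1}K$, using (3.2) with $t=0$ and the anticommutativity of $K$ with $\mathcal{E}_{\delta+\alpha_1}$; this is the claim for $n=1$.

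For the inductive step I would apply $[-,e_1]$ to the recursion of Theorem 3.5, expand with the Leibniz rule, and substitute the induction hypothesis $[\Psi(a(r)),e_1]=-\tfrac{[2r]_q}{r}\mathcal{E}_{r\delta+\alpha_1}K^{r}$ for each $r<n$. Collecting all powers of $K$ to the outside (legitimate since $K$ commutes with every $\mathcal{E}_{j\delta}$ and every $\Psi(a(r))$), the whole expression is rewritten through the twisted commutators $\tilde b_m$. The main obstacle is precisely the evaluation of $\tilde b_m$: term by term it is not a multiple of a single root vector — already $[\mathcal{E}_{\delta},e_1]=(1+q^{-2})e_1e_2e_1-q^{-2}e_2e_1^{2}-e_1^{2}e_2$ is a genuine length-three element that the $q$-Serre relation, being of degree four, cannot reduce — so $\tilde b_m$ must be computed by its own induction from the defining relation $\mathcal{E}_{m\delta}=e_1\mathcal{E}_{(m-1)\delta+\alpha_2}+(-1)^m q^{-2}\mathcal{E}_{(m-1)\delta+\alpha_2}e_1$ together with Corollary 2.11.

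Finally, inserting the closed form of $\tilde b_m$ into the expanded recursion, one sees that the recursion of Theorem 3.5 is exactly the Newton identity converting the ``power sums'' $\Psi(a(r))$ into the imaginary vectors $K^{m}\mathcal{E}_{m\delta}$; I expect the numerous cross terms to telescope and leave the single term with coefficient $-\tfrac{[2n]_q}{n}$, after which anticommuting $K^n$ past $\mathcal{E}_{n\delta+\alpha_1}$ supplies the displayed sign. I anticipate two genuine difficulties: pinning down $\tilde b_m$ in closed form, and checking that the nested $q$-integer sums produced by the Newton recursion simplify to $[2n]_q/n$; throughout, the sign normalisations forced by the anticommutativity of $K$ with the $e_i$ and with the real root vectors must be tracked at every step.
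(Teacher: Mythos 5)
Your skeleton is the paper's: both arguments induct on $n$ through the recursion defining $\Psi(a(n))$, and both hinge on a straightening identity expressing $\mathcal{E}_{m\delta}e_1$ in terms of $(-1)^m e_1\mathcal{E}_{m\delta}$ plus correction terms. Your base case $\Psi(a(1))=K\mathcal{E}_\delta$ and the sign bookkeeping via the anticommutativity of $K=k_1k_2$ with $e_1$ and with real root vectors are correct, and your reduction of the general-$m$ statement to $m=0$ via $[\Psi(a(n)),\mathcal{E}_\delta]=0$ and relation (3.2) is sound (though not needed for the lemma as displayed, which only asserts the $m=0$ identities; the general $m$ is Remark 3.12).

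The gap is that you leave unresolved precisely the two steps that constitute the paper's proof. The object you call $\tilde b_m$ is the paper's identity (3.19),
\[
\mathcal{E}_{m\delta}e_1-(-1)^{m}e_1\mathcal{E}_{m\delta}
=\sum_{k=1}^{m-1}b^{(m)}_k\,\mathcal{E}_{k\delta+\alpha_1}\mathcal{E}_{(m-k)\delta}+b^{(m)}_m\,\mathcal{E}_{m\delta+\alpha_1},
\]
with the explicit coefficients $b^{(m)}_k=(-1)^{m-1}q^{-2(k-1)}(q^2-q^{-2})$ and $b^{(m)}_m=(-1)^{m-1}q^{-2(m-1)}[2]_q$; the paper does not rederive it but imports it from \cite{HZhuang}, then applies $(\mathcal{T}_1\Phi)^r$ to generate the shifted versions (3.23) and verifies by a direct (and nontrivial) computation that the ``remaining terms'' cancel, leaving the coefficient $b^{(n)}_n+\frac{q-q^{-1}}{n}\sum_{r=1}^{n-1}(-1)^r[2r]_q b^{(n-r)}_{n-r}=(-1)^{n+1}\frac{[2n]_q}{n}$. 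Without the closed form of $\tilde b_m$ and without carrying out this cancellation, your text is a correct plan rather than a proof. A minor correction: your evidence that $\tilde b_m$ is not a single root vector uses the untwisted bracket $[\mathcal{E}_\delta,e_1]$; the correctly twisted bracket for $m=1$ is $\mathcal{E}_\delta e_1+e_1\mathcal{E}_\delta=[2]_q\mathcal{E}_{\delta+\alpha_1}$, which \emph{does} collapse, consistent with (3.19) where the sum is empty for $m=1$. The genuine complexity only appears for $m\ge 2$.
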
  
\begin{proof}
Here we only prove equation (3.16), the proof of equation (3.17) is analogous and thus omitted. We complete the proof by  induction. First, we verify the base case \(n=1\), for which the conclusion obviously holds. 
Next, we assume  the equality is valid for \(n-1\), that is,
\begin{align}
[\Psi(a(l)), e_{1}]  = -\dfrac{[2l]_q}{l} \mathcal{E}_{l\delta+\alpha_1}(k_1k_2)^l,\quad  0<l\le n-1.
\end{align}
We then proceed to prove that the equality also holds for \(n\).

To this end, we need to recall some equalities given in \cite{11}.
\begin{align}
\mathcal{E}_{n\delta}e_1
=(-1)^{n}e_1\mathcal{E}_{n\delta}+\sum\limits_{k=1}^{n-1}b^{(n)}_k\mathcal{E}_{k\delta+\alpha_1}\mathcal{E}_{(n-k)\delta}+b^{(n)}_{n}\mathcal{E}_{n\delta+\alpha_1},
\end{align}
where \(b^{(n)}_k\) and \(b^{(n)}_n\) are coefficients
\begin{align}
&b^{(n)}_{k} = (-1)^{n-1}q^{-2(k-1)}(q^2 - q^{-2}), \\
&b^{(n)}_{n} = (-1)^{n-1}q^{-2(n-1)}[2]_q. 
\end{align}
Taking summation of both sides of equation (3.19), we directly obtain the following equality
\begin{align}
&\sum\limits_{r=1}^{n-1}[2r]_q\mathcal{E}_{(n-r)\delta}e_1\nonumber\\
=&\sum\limits_{r=1}^{n-1}[2r]_q\Bigl((-1)^{n-r}e_1\mathcal{E}_{(n-r)\delta}+\sum\limits_{k=1}^{n-r-1}b^{(n-r)}_k\mathcal{E}_{k\delta+\alpha_1}\mathcal{E}_{(n-r-k)\delta}+b^{(n-r)}_{n-r}\mathcal{E}_{(n-r)\delta+\alpha_1}\Bigr).
\end{align}
By applying the automorphism \((\mathcal{T}_1\Phi)^r\)  to equation (3.22), we obtain
\begin{align}
&\sum\limits_{r=1}^{n-1}[2r]_q\mathcal{E}_{(n-r)\delta}\mathcal{E}_{r\delta+\alpha_1}\nonumber\\
=&\sum\limits_{r=1}^{n-1}[2r]_q(\mathcal{T}_1\Phi)^r\Bigl((-1)^{n-r}e_1\mathcal{E}_{(n-r)\delta}{+}\sum\limits_{k=1}^{n-r-1}b^{(n-r)}_k\mathcal{E}_{k\delta+\alpha_1}\mathcal{E}_{(n-r-k)\delta}{+}b^{(n-r)}_{n-r}\mathcal{E}_{(n-r)\delta+\alpha_1}\Bigr)\nonumber\\
=&\sum\limits_{r=1}^{n-1}[2r]_q\Bigl((-1)^{n-r}\mathcal{E}_{r\delta+\alpha_1}\mathcal{E}_{(n-r)\delta}{+}\sum\limits_{k=1}^{n-r-1}b^{(n-r)}_k\mathcal{E}_{(k+r)\delta+\alpha_1}\mathcal{E}_{(n-r-k)\delta}{+}b^{(n-r)}_{n-r}\mathcal{E}_{n\delta+\alpha_1}\Bigr).
\end{align}
Recall the definition of \(\Psi(a(n))\):
\[
\Psi(a(n)) = (k_1k_2)^n\mathcal{E}_{n\delta} - \frac{q - q^{-1}}{n}\sum\limits_{r=1}^{n-1}r(k_1k_2)^{n-r}\mathcal{E}_{(n-r)\delta}\Psi(a(r)).
\]
Substituting equations (3.18), (3.19) and (3.23) into the following calculation yields
\begin{align*}
\Psi(a(n))e_1 &=\Bigl((k_1k_2)^n\mathcal{E}_{n\delta} - \frac{q - q^{-1}}{n}\sum\limits_{r=1}^{n-1}r(k_1k_2)^{n-r}\mathcal{E}_{(n-r)\delta}\Psi(a(r))\Bigr)e_1\\
&=(k_1k_2)^n \Bigl\{(-1)^ne_1\mathcal{E}_{n\delta} + \sum\limits_{r=1}^{n-1}b^{(n)}_r\mathcal{E}_{r\delta+\alpha_1}\mathcal{E}_{(n-r)\delta} + b^{(n)}_n\mathcal{E}_{n\delta+\alpha_1}\Bigr\}\\
&\quad- \frac{q - q^{-1}}{n}\sum\limits_{r=1}^{n-1}r(k_1k_2)^{n-r}\mathcal{E}_{(n-r)\delta}\Bigl(e_1\Psi(a(r))-\dfrac{[2r]_q}{r} \mathcal{E}_{r\delta+\alpha_1}(k_1k_2)^r\Bigr)
\\
&=(k_1k_2)^n \Bigl\{(-1)^ne_1\mathcal{E}_{n\delta} + \sum\limits_{r=1}^{n-1}b^{(n)}_r\mathcal{E}_{r\delta+\alpha_1}\mathcal{E}_{(n-r)\delta} + b^{(n)}_n\mathcal{E}_{n\delta+\alpha_1}\Bigr\}
\\
&\quad-\frac{q - q^{-1}}{n}\sum\limits_{r=1}^{n-1}r(k_1k_2)^{n-r}\Bigl\{(-1)^{n-r}e_1\mathcal{E}_{(n-r)\delta} {+} \sum\limits_{k=1}^{n-r-1}b^{(n-r)}_k\mathcal{E}_{k\delta+\alpha_1}\mathcal{E}_{(n-r-k)\delta}
\end{align*}
\begin{align*}
&\quad+ b^{(n-r)}_{n-r}\mathcal{E}_{(n-r)\delta+\alpha_1}\Bigr\}\Psi(a(r)) - \frac{q {-} q^{-1}}{n}\sum\limits_{r=1}^{n-1}(-1)^{r+1}[2r]_q(k_1k_2)^{n}\\
&\quad\cdot\Bigl\{(-1)^{n-r}\mathcal{E}_{r\delta+\alpha_1}\mathcal{E}_{(n-r)\delta}+\sum\limits_{k=1}^{n-r-1}b^{(n-r)}_k\mathcal{E}_{(k+r)\delta+\alpha_1}\mathcal{E}_{(n-r-k)\delta}{+} b^{(n-r)}_{n-r}\mathcal{E}_{n\delta+\alpha_1}\Bigr\}\\
&=e_1\Psi(a(n))+(k_1k_2)^n\Bigl(b^{(n)}_n+\dfrac{q-q^{-1}}{n}\sum\limits_{r=1}^{n-1}(-1)^r[2r]_qb^{(n-r)}_{n-r}\Bigr)\mathcal{E}_{n\delta+\alpha_1}\\
&\quad+\text{the remaining terms}.
\end{align*}
We proceed to prove that the remaining terms are precisely zero. We first fix the variable 
\(r\) and then consider the relevant terms that contain \(\mathcal{E}_{r\delta+\alpha_1}\) within the remaining terms:
\begin{align*}
&b^{(n)}_r(k_1k_2)^r(-1)^{n-s}\mathcal{E}_{r\delta+\alpha_1}(k_1k_2)^{n-r}\mathcal{E}_{(n-r)\delta}-\dfrac{q-q^{-1}}{n}(k_1k_2)^r\mathcal{E}_{r\delta+\alpha_1}\Bigl\{(n-r)b^{(r)}_r\Psi(a(n-r))\\
&+\sum\limits_{s=1}^{n-1-r}s(k_1k_2)^{n-r-s}(-1)^{n-r-s}b^{(n-s)}_r\mathcal{E}_{(n-r-s)\delta}\Psi(a(s))+(-1)^{r+1}[2r](k_1k_2)^{n-r}\mathcal{E}_{(n-r)\delta}\\
&+\sum\limits_{s=1}^{r-1}(-1)^{n-s+r+1}[2s](k_1k_2)^{n-r}b^{(n-s)}_{r-s}\mathcal{E}_{(n-r)\delta}\Bigr\}\\
=&\Bigl\{b^{(n)}_r+\dfrac{q{-}q^{-1}}{n}(-1)^{n}[2r]+\dfrac{q{-}q^{-1}}{n}\sum\limits_{s=1}^{r-1}(-1)^{s}[2s]b^{(n-s)}_{r-s}\Bigr\}(-1)^{n-r}(k_1k_2)^r\mathcal{E}_{r\delta+\alpha_1}(k_1k_2)^{n-r}\mathcal{E}_{(n-r)\delta}\\
&+\dfrac{q-q^{-1}}{n}(k_1k_2)^r\mathcal{E}_{r\delta+\alpha_1}\sum\limits_{s=1}^{n-r-1}s(-1)^{n-r-s+1}(k_1k_2)^{n-r-s}b^{(n-s)}_r\mathcal{E}_{(n-r-s)\delta}\Psi(a(s))\\
&-\dfrac{q-q^{-1}}{n}(n-r)b^{(r)}_r(k_1k_2)^r\mathcal{E}_{r\delta+\alpha_1}\Psi(a(n-r))
\\
=&\Bigl\{b^{(n)}_r+\dfrac{q{-}q^{-1}}{n}(-1)^{n}[2r]+\dfrac{q{-}q^{-1}}{n}\sum\limits_{s=1}^{r-1}(-1)^{s}[2s]b^{(n-s)}_{r-s}\Bigr\}(-1)^{n-r}(k_1k_2)^r\mathcal{E}_{r\delta+\alpha_1}(k_1k_2)^{n-r}\mathcal{E}_{(n-r)\delta}\\
&+\dfrac{q-q^{-1}}{n}(k_1k_2)^r\mathcal{E}_{r\delta+\alpha_1}
(-1)^rq^{-2(r-1)}(q^2-q^{-2})\dfrac{n-r}{q-q^{-1}}\Bigl\{(k_1k_2)^{n-r}\mathcal{E}_{(n-r)\delta}-\Psi(a(n-r))\Bigr\}
\\
&-\dfrac{q-q^{-1}}{n}(n-r)b^{(r)}_r(k_1k_2)^r\mathcal{E}_{r\delta+\alpha_1}\Psi(a(n-r))\\
=&0.
\end{align*}
To sum up, we have finished demonstrating that equation (3.16) holds.
\end{proof}

\begin{remark}
The commutation relations can be generalized to arbitrary real root vectors \(\mathcal{E}_{m\delta+\alpha_i}\) and negative root generators \(f_i\). For \(n \in \mathbb{Z}_{+}\) and \(m \in \mathbb{Z}_{\ge 0}\), the following hold
\begin{align}
&[\Psi(a(n)), \mathcal{E}_{m\delta+\alpha_1}] = -\dfrac{[2n]_q}{n} \mathcal{E}_{(m+n)\delta+\alpha_1}(k_1k_2)^n,\\
&[\Psi(a(n)), \mathcal{E}_{m\delta+\alpha_2}] = (k_1k_2)^n\dfrac{[2n]_q}{n} \mathcal{E}_{(m+n)\delta+\alpha_2},
\end{align}
\begin{align}
&\Psi(a(n))f_2 + (-1)^{n-1}f_2\Psi(a(n)) = -\dfrac{[2n]_q}{n}q^{-2}\mathcal{E}_{(n-1)\delta+\alpha_1}k^n_1k^{n+1}_2,
\\
&\Psi(a(n))f_1 + (-1)^{n-1}f_1\Psi(a(n)) = (-1)^{n}\dfrac{[2n]_q}{n}q^{-2}k^{n-1}_1k^{n}_2\mathcal{E}_{(n-1)\delta+\alpha_2}.
\end{align}
\end{remark}

To prove the commutation relations between \(\Psi(a(n))\) and \(\Psi(a(-m))\)\(\,(n,m\in\mathbb{Z}_+)\),  we have to find the commutation relations between the \({\mathcal{E}_{n\delta}}^{\prime}s\) and the \({\mathcal{F}_{m\delta}}^{\prime}s\). 
We omit the proofs of the following results.

\begin{coro} 
For \(n,m\in\mathbb{Z}_{+}\) with \(n<m\), it holds
 \begin{align}  
[\Psi(a(n)), \mathcal{F}_{m\delta}]=&-[2]_q\mathcal{F}_{(m-n)\delta}k^n_1k^n_2(k_1k_2-k^{-1}_1k^{-1}_2)\dfrac{[n]_{1,q}[n]_{K,1}}{n}\nonumber\\
=&-\mathcal{F}_{(m-n)\delta}k^n_1k^n_2\dfrac{[2n]_q}{n} \cdot (k^n_1k^n_2 - k^{-n}_1k^{-n}_2),
\end{align} 
an analogous formula holds by applying \(\Omega\) to (3.28). 
\end{coro}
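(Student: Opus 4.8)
The plan is to prove \((3.28)\) by reducing the commutator with the imaginary vector \(\mathcal{F}_{m\delta}\) to commutators with negative real root vectors, for which the tools assembled in Remark 3.12 are available. Write \(K=k_1k_2\). A preliminary observation is that \(K\) commutes with every imaginary root vector \(\mathcal{E}_{k\delta}\) and \(\mathcal{F}_{k\delta}\): by \((A2)\) each of \(k_1,k_2\) anticommutes with every \(e_i\), hence with every \(f_i\), so \(K\) anticommutes with each simple root vector; since \(\mathcal{E}_{k\delta}\) and \(\mathcal{F}_{k\delta}\) are sums of words of even length (namely \(2k\)) in the simple generators, the two sign changes cancel and the factors \(K^{\pm 1}\) may be pulled through freely. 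I would then expand \(\mathcal{F}_{m\delta}\) through its defining recursion \(\mathcal{F}_{m\delta}=\mathcal{F}_{(m-1)\delta+\alpha_2}f_1+(-1)^mq^2f_1\mathcal{F}_{(m-1)\delta+\alpha_2}\) and apply the Leibniz rule, so that \([\Psi(a(n)),\mathcal{F}_{m\delta}]\) is expressed through the commutator of \(\Psi(a(n))\) with \(f_1\), supplied by \((3.27)\), and with the negative real root vector \(\mathcal{F}_{(m-1)\delta+\alpha_2}\).

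The heart of the argument is therefore a separate sub-lemma giving the negative counterparts of \((3.24)\)--\((3.25)\), namely the commutation relations between \(\Psi(a(n))\) and the vectors \(\mathcal{F}_{p\delta+\alpha_i}\). I would establish these by induction on \(p\), taking the seed \(p=0\) from \((3.26)\) and \((3.27)\) and propagating along the \(\Omega\)-image of Theorem 2.10, \([2]_q\mathcal{F}_{(p+1)\delta+\alpha_i}=\mathcal{F}_{p\delta+\alpha_i}\mathcal{F}_\delta+\mathcal{F}_\delta\mathcal{F}_{p\delta+\alpha_i}\). The auxiliary commutator \([\Psi(a(n)),\mathcal{F}_\delta]\) needed to run this induction is itself computed directly from \((3.26)\) and \((3.27)\) via \(\mathcal{F}_\delta=f_2f_1-q^2f_1f_2=\Omega(\mathcal{E}_\delta)\), so no circular appeal to \((3.28)\) is made. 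In the range \(p\ge n\) relevant here (recall \(n<m\)) this produces a multiple of \(\mathcal{F}_{(p-n)\delta+\alpha_i}\), whereas at the resonance \(p=n\) one picks up Cartan contributions of the shape \((3.6)\)--\((3.7)\); keeping track of these regime-dependent signs coming from the sign-deformation is where the computation is most delicate.

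With the sub-lemma in hand, substituting it together with \((3.27)\) into the Leibniz expansion and pulling out the \(K\)-powers leaves a combination whose \(q\)-integer coefficients must be shown to telescope into the single term \(-\mathcal{F}_{(m-n)\delta}K^n\tfrac{[2n]_q}{n}(K^n-K^{-n})\). This final collapse is a \(q\)-number identity of exactly the same flavour as the ``the remaining terms are precisely zero'' step that closes the proof of Lemma 3.11; factoring the Cartan element \(K^n-K^{-n}\) into two-parameter \(q\)-integers then yields the first displayed form in \((3.28)\). Finally, applying the antiautomorphism \(\Omega\), using \(\Omega(\Psi(a(n)))=\Psi(a(-n))\) (which follows from comparing the two defining recursions of \(\Psi(a(\pm k))\)), \(\Omega(\mathcal{F}_{k\delta})=\mathcal{E}_{k\delta}\) and \(\Omega(q)=q^{-1}\), converts \((3.28)\) into the asserted analogous formula for \([\Psi(a(-n)),\mathcal{E}_{m\delta}]\).

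I expect the main obstacle to be the sub-lemma of the second paragraph: isolating the resonant \(p=n\) case from the generic \(p>n\) case and controlling the cascade of signs \((-1)^{\bullet}\) introduced by the sign-deformation, so that only the \(\mathcal{F}_{(p-n)\delta+\alpha_i}\) contribution survives in the relevant range. Once that closed form is pinned down, the remaining induction and the final coefficient bookkeeping are routine.
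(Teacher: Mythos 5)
Your overall skeleton matches the paper's: expand \(\mathcal{F}_{m\delta}\) through its defining recursion \(\mathcal{F}_{m\delta}=\mathcal{F}_{(m-1)\delta+\alpha_2}f_1+(-1)^mq^2f_1\mathcal{F}_{(m-1)\delta+\alpha_2}\), move \(\Psi(a(n))\) past \(f_1\) via (3.27) and past \(\mathcal{F}_{(m-1)\delta+\alpha_2}\) via a separately established twisted commutation relation, and then collapse what remains. The genuine divergence is in how that key relation is obtained. The paper gets it in one stroke by applying the automorphism \((\mathcal{T}_2^{-1}\Phi)^{m-1}\) to the already-proved relation (3.26) with \(f_2\) (implicitly using that \(\Psi(a(n))\) is fixed by this automorphism, which traces back to Proposition 2.12), producing the correction term \(-\tfrac{[2n]_q}{n}q^{-2}(\mathcal{T}_2^{-1}\Phi)^{m-n}(e_1)k_1^{n+m-1}k_2^{n+m}\) directly; you instead propose a fresh induction on the real-root index along \([2]_q\mathcal{F}_{(p+1)\delta+\alpha_i}=\mathcal{F}_{p\delta+\alpha_i}\mathcal{F}_\delta+\mathcal{F}_\delta\mathcal{F}_{p\delta+\alpha_i}\), seeded at \(p=0\). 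Your route is valid and self-contained (and your check that it is not circular is correct), but it is considerably more laborious, since the twisted Leibniz rule forces you to control both the resonant Cartan contributions and the sign cascade at every inductive step, whereas the automorphism argument transports the whole family of relations at once. One further difference of substance: the final collapse in the paper is not a telescoping \(q\)-integer identity as you suggest, but a recombination of the two residual commutators \([\mathcal{E}_{(n-1)\delta+\alpha_2},\mathcal{F}_{(m-1)\delta+\alpha_2}]\) and \([(\mathcal{T}_2^{-1}\Phi)^{m-n}(e_1),f_1]\), evaluated by Lemma 3.10 and reassembled into \(\mathcal{F}_{(m-n)\delta}\) via the quadratic expression of imaginary root vectors in terms of real ones (the \(\Omega\)-image of Corollary 2.11); you will need that identity explicitly to land on \(\mathcal{F}_{(m-n)\delta}k_1^nk_2^n\tfrac{[2n]_q}{n}(k_1^nk_2^n-k_1^{-n}k_2^{-n})\). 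Your preliminary observation that \(k_1k_2\) commutes with the imaginary root vectors, and your use of \(\Omega\) to deduce the companion formula for \([\Psi(a(-n)),\mathcal{E}_{m\delta}]\), are both correct and consistent with the paper.
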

\begin{proof}
To derive the above relation, we start with the definition of \(\mathcal{F}_{m\delta}\)  and substitute the relevant commutation relations (3.26) \& (3.27) step by step.

By the definition of negative imaginary root vectors, \(\mathcal{F}_{m\delta}\) can be expressed as
\begin{align}
\mathcal{F}_{m\delta} = \mathcal{F}_{(m-1)\delta+\alpha_2}f_1 + (-1)^m q^2 f_1 \mathcal{F}_{(m-1)\delta+\alpha_2}.
\end{align}
By applying the automorphism \((\mathcal{T}^{-1}_2\Phi)^{m-1}\)  to equation (3.26), we obtain
\begin{align}
\Psi(a(n))\mathcal{F}_{(m-1)\delta+\alpha_2}
=(-1)^n\mathcal{F}_{(m-1)\delta+\alpha_2}\Psi(a(n))-\dfrac{[2n]_q}{n}q^{-2}(\mathcal{T}^{-1}_2\Phi)^{m-n}(e_1)k^{n+m-1}_1k^{n+m}_2.
\end{align}
By directly substituting equations (3.27), (3.29), and (3.30) into the subsequent calculation, we obtain
\begin{align*}
\Psi(a(n))\mathcal{F}_{m\delta} &=\Psi(a(n))\Bigl\{ \mathcal{F}_{(m-1)\delta+\alpha_2}f_1 + (-1)^m q^2 f_1 \mathcal{F}_{(m-1)\delta+\alpha_2} \Bigr\}\\
&=\Bigl\{(-1)^n\mathcal{F}_{(m-1)\delta+\alpha_2}\Psi(a(n))-\dfrac{[2n]_q}{n}q^{-2}(\mathcal{T}^{-1}_2\Phi)^{m-n}(e_1)k^{n+m-1}_1k^{n+m}_2\Bigr\}f_1\\
&\,+(-1)^mq^2\Bigl\{(-1)^{n}f_1\Psi(a(n)) +(-1)^n\frac{[2n]_q}{n} q^{-2} k_1^{n-1} k_2^n \mathcal{E}_{(n-1)\delta+\alpha_2}
\Bigr\}\mathcal{F}_{(m-1)\delta+\alpha_2}\\
&=\mathcal{F}_{(m-1)\delta+\alpha_2}f_1\Psi(a(n))+(-1)^{m+n-1}\dfrac{[2n]_q}{n}\Bigl\{k^{n-1}_1k^{n}_2\mathcal{F}_{(m-1)\delta+\alpha_2}\mathcal{E}_{(n-1)\delta+\alpha_2}\\
&\,-(\mathcal{T}^{-1}_2\Phi)^{m-n}(e_1)f_1k^{n+m-1}_1k^{n+m}_2\Bigr\}+ (-1)^{m+n} \frac{[2n]_q}{n} k_1^{n-1} k_2^{n}\mathcal{E}_{(n-1)\delta+\alpha_2} \mathcal{F}_{(m-1)\delta+\alpha_2}\\
&\,+({-}1)^{m+n}q^2f_1 \Bigl\{ ({-}1)^n \mathcal{F}_{(m-1)\delta+\alpha_2}\Psi(a(n)) {-} \frac{[2n]_q}{n} q^{-2} (\mathcal{T}_2^{-1}\Phi)^{m-n}(e_1) k_1^{n+m-1} k_2^{n+m} \Bigr\}.
\end{align*}

After rearranging terms and canceling intermediate terms, we obtain
\begin{align*}
\Psi(a(n))\mathcal{F}_{m\delta} &= \mathcal{F}_{m\delta}\Psi(a(n)) + (-1)^{m+n} \frac{[2n]_q}{n} \Bigl\{ k_1^{n-1} k_2^n \bigl[ \mathcal{E}_{(n-1)\delta+\alpha_2}, \mathcal{F}_{(m-1)\delta+\alpha_2} \bigr] \\
&\quad+ \bigl[ (\mathcal{T}_2^{-1}\Phi)^{m-n}(e_1), f_1 \bigr] k_1^{n+m-1} k_2^{n+m} \Bigr\}.
\end{align*}
A simple inductive reasoning yields the following equation
\begin{align}
(\mathcal{T}_2^{-1}\Phi)^{m-n}(e_1) = -k_1^{n-m+1} k_2^{n-m} \mathcal{F}_{(m-n-1)\delta+\alpha_2}. 
\end{align}
Combining these results, the expression ultimately reduces to
\[
\Psi(a(n))\mathcal{F}_{m\delta} = \mathcal{F}_{m\delta}\Psi(a(n)) - \mathcal{F}_{(m-n)\delta}k_1^n k_2^n \cdot \frac{[2n]_q}{n} \cdot \left( k_1^n k_2^n - k_1^{-n} k_2^{-n} \right).
\]

This completes the proof.
\end{proof}

\begin{coro}
For \(n\in\mathbb{Z}_{+}\), it holds  
\begin{align}   
[\Psi(a(n)), \mathcal{F}_{n\delta}]
&=k_1^n k_2^n \dfrac{[2n]_q}{n} \cdot \dfrac{(k_1k_2)^{n} - (k_1k_2)^{-n}}{q - q^{-1}}.
\end{align} 
\end{coro}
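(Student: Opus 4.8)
The plan is to treat this as the boundary case $m=n$ of Corollary 3.13, reusing the general intermediate identity obtained in that proof \emph{before} the hypothesis $n<m$ is imposed, namely
\begin{align*}
\Psi(a(n))\mathcal{F}_{m\delta} &= \mathcal{F}_{m\delta}\Psi(a(n)) + (-1)^{m+n}\frac{[2n]_q}{n}\Bigl\{ k_1^{n-1}k_2^n\bigl[\mathcal{E}_{(n-1)\delta+\alpha_2},\mathcal{F}_{(m-1)\delta+\alpha_2}\bigr] \\
&\quad + \bigl[(\mathcal{T}_2^{-1}\Phi)^{m-n}(e_1), f_1\bigr]k_1^{n+m-1}k_2^{n+m}\Bigr\}.
\end{align*}
This identity is assembled from (3.27), the definition of $\mathcal{F}_{m\delta}$, and the transform of the $f_2$-relation, none of which needs $m>n$; it therefore remains valid at $m=n$ (requiring only $n\ge 1$, which holds). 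Specializing $m=n$, the sign prefactor collapses to $(-1)^{2n}=1$ and the automorphism power $(\mathcal{T}_2^{-1}\Phi)^{m-n}$ becomes the identity, so the second bracket degenerates to $[e_1,f_1]$ while the first becomes the \emph{diagonal} bracket $[\mathcal{E}_{(n-1)\delta+\alpha_2},\mathcal{F}_{(n-1)\delta+\alpha_2}]$.

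First I would evaluate the two brackets separately. For the first I apply the diagonal relation (3.7) with its index specialized to $n-1$, giving $[\mathcal{E}_{(n-1)\delta+\alpha_2},\mathcal{F}_{(n-1)\delta+\alpha_2}]=(k_2^{n}k_1^{n-1}-k_2^{-n}k_1^{-(n-1)})/(q-q^{-1})$; for the second I invoke the Cartan relation (A3), $[e_1,f_1]=(k_1-k_1^{-1})/(q-q^{-1})$. Multiplying each by its accompanying group-like monomial and using that $k_1,k_2$ commute (A1), the first contribution reduces to $(k_1^{2n-2}k_2^{2n}-1)/(q-q^{-1})$ and the second to $(k_1^{2n}k_2^{2n}-k_1^{2n-2}k_2^{2n})/(q-q^{-1})$.

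The decisive step is then the cancellation of the common monomial $k_1^{2n-2}k_2^{2n}$ when the two contributions are added, leaving $((k_1k_2)^{2n}-1)/(q-q^{-1})$. Multiplying by $[2n]_q/n$ and rewriting $(k_1k_2)^{2n}-1=(k_1k_2)^n\bigl((k_1k_2)^n-(k_1k_2)^{-n}\bigr)$ yields precisely the claimed expression $k_1^{n}k_2^{n}\,\frac{[2n]_q}{n}\cdot\frac{(k_1k_2)^n-(k_1k_2)^{-n}}{q-q^{-1}}$.

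The main obstacle — and the reason this case must be separated from Corollary 3.13 — is exactly the degeneration at $m=n$: the reduction $(\mathcal{T}_2^{-1}\Phi)^{m-n}(e_1)=-k_1^{n-m+1}k_2^{n-m}\mathcal{F}_{(m-n-1)\delta+\alpha_2}$ used there requires $m-n-1\ge 0$ and would here point to the undefined vector $\mathcal{F}_{-\delta+\alpha_2}$. One must therefore avoid routing the boundary term through a negative real root vector and instead evaluate $[e_1,f_1]$ directly by (A3); the verification that this diagonal contribution conspires with the diagonal bracket (3.7) to produce the group-like combination $(k_1k_2)^{2n}-1$, rather than the homogeneous factor $(k_1k_2)^n-(k_1k_2)^{-n}$ that governs the off-diagonal case, is the crux of the computation.
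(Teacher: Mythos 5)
Your proof is correct and follows essentially the same route as the paper: both arrive at the intermediate identity
\(\Psi(a(n))\mathcal{F}_{n\delta}=\mathcal{F}_{n\delta}\Psi(a(n))+\frac{[2n]_q}{n}\bigl\{k_1^{n-1}k_2^{n}\bigl[\mathcal{E}_{(n-1)\delta+\alpha_2},\mathcal{F}_{(n-1)\delta+\alpha_2}\bigr]+\bigl[e_1,f_1\bigr]k_1^{2n-1}k_2^{2n}\bigr\}\)
and then evaluate the two brackets via (3.7) and (A3). The only cosmetic difference is that you obtain this identity by specializing the general computation of Corollary 3.13 at \(m=n\) (correctly noting why the off-diagonal reduction to \(\mathcal{F}_{(m-n-1)\delta+\alpha_2}\) must be replaced by a direct use of (A3)), whereas the paper re-derives it from scratch; your final cancellation of \(k_1^{2n-2}k_2^{2n}\) and factoring of \((k_1k_2)^{2n}-1\) matches the paper's conclusion.
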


\begin{proof}
Recall the definition of negative imaginary root vectors  
\begin{align}
\mathcal{F}_{n\delta}=\mathcal{F}_{(n-1)\delta+\alpha_2}f_1+(-1)^{n}q^2f_1\mathcal{F}_{(n-1)\delta+\alpha_2}.
\end{align}
By applying the automorphism \((\mathcal{T}^{-1}_2\Phi)^{n-1}\)  to equation (3.26), we obtain
\begin{align}
\Psi(a(n))\mathcal{F}_{(n-1)\delta+\alpha_2}
=(-1)^n\mathcal{F}_{(n-1)\delta+\alpha_2}\Psi(a(n))-\dfrac{[2n]_q}{n}q^{-2}e_1k^{2n-1}_1k^{2n}_2.
\end{align}
By directly substituting equations (3.27), (3.33), and (3.34) into the subsequent calculation, we obtain
\begin{align*}
\Psi(a(n))\mathcal{F}_{n\delta} &=\Psi(a(n))\Bigl\{ \mathcal{F}_{(n-1)\delta+\alpha_2}f_1 + (-1)^n q^2 f_1 \mathcal{F}_{(n-1)\delta+\alpha_2} \Bigr\}\\
&=\Bigl\{(-1)^n\mathcal{F}_{(n-1)\delta+\alpha_2}\Psi(a(n))-\dfrac{[2n]_q}{n}q^{-2}e_1k^{2n-1}_1k^{2n}_2\Bigr\}f_1\\
&\quad+(-1)^nq^2\Bigl\{(-1)^{n-1}f_1\Psi(a(n)) - \frac{[2n]_q}{n} q^{-2} k_1^{n-1} k_2^n \mathcal{E}_{(n-1)\delta+\alpha_2}
\Bigr\}\mathcal{F}_{(n-1)\delta+\alpha_2}\\
&=\mathcal{F}_{(n-1)\delta+\alpha_2}f_1\Psi(a(n))-\dfrac{[2n]_q}{n}\Bigl\{k^{n-1}_1k^{n}_2\mathcal{F}_{(n-1)\delta+\alpha_2}\mathcal{E}_{(n-1)\delta+\alpha_2}\\
&\quad-e_1f_1k^{2n-1}_1k^{2n}_2\Bigr\}+ \frac{[2n]_q}{n} k_1^{n-1} k_2^{n}\mathcal{E}_{(n-1)\delta+\alpha_2} \mathcal{F}_{(n-1)\delta+\alpha_2}\\
&\quad+q^2f_1 \Bigl\{ (-1)^n \mathcal{F}_{(n-1)\delta+\alpha_2}\Psi(a(n)) - \frac{[2n]_q}{n} q^{-2} e_1 k_1^{2n-1} k_2^{2n} \Bigr\}.
\end{align*}

After rearranging terms and cancelling intermediate terms, we obtain
\begin{align}
\Psi(a(n))\mathcal{F}_{n\delta} &= \mathcal{F}_{n\delta}\Psi(a(n)) + \frac{[2n]_q}{n} \Bigl\{ k_1^{n-1} k_2^n \bigl[ \mathcal{E}_{(n-1)\delta+\alpha_2}, \mathcal{F}_{(n-1)\delta+\alpha_2} \bigr] \nonumber\\
&\quad+ \bigl[ e_1, f_1 \bigr] k_1^{2n-1} k_2^{2n} \Bigr\}.
\end{align}
Combining these results, the expression ultimately reduces to
\[
\Psi(a(n))\mathcal{F}_{n\delta} = \mathcal{F}_{n\delta}\Psi(a(n)) +k_1^n k_2^n \cdot \frac{[2n]_q}{n} \cdot \left( k_1^n k_2^n - k_1^{-n} k_2^{-n} \right).
\]

This completes the proof.
\end{proof}

\begin{coro} 
For \(n,m\in\mathbb{Z}_{+}\) with \(n>m\), it holds \begin{align}   
[\Psi(a(n)), \mathcal{F}_{m\delta}]=0, 
\end{align} 
an analogous formula holds by applying \(\Omega\) to (3.36).
\end{coro}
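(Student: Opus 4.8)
The plan is to reuse the computational skeleton already established in the proofs of Corollaries 3.14 and 3.15 verbatim up to the point where the strict inequality becomes relevant, and then exploit $n>m$ to show that the two surviving commutator terms cancel exactly. First I would start from the defining recursion $\mathcal{F}_{m\delta}=\mathcal{F}_{(m-1)\delta+\alpha_2}f_1+(-1)^mq^2f_1\mathcal{F}_{(m-1)\delta+\alpha_2}$, apply $(\mathcal{T}_2^{-1}\Phi)^{m-1}$ to relation (3.26) as in (3.30), and substitute together with (3.27). After cancelling intermediate terms exactly as before, one arrives at the same master identity
\begin{align*}
\Psi(a(n))\mathcal{F}_{m\delta} &= \mathcal{F}_{m\delta}\Psi(a(n)) + (-1)^{m+n}\frac{[2n]_q}{n}\Bigl\{ k_1^{n-1}k_2^n\bigl[\mathcal{E}_{(n-1)\delta+\alpha_2},\mathcal{F}_{(m-1)\delta+\alpha_2}\bigr] \\
&\quad + \bigl[(\mathcal{T}_2^{-1}\Phi)^{m-n}(e_1),f_1\bigr]k_1^{n+m-1}k_2^{n+m}\Bigr\}.
\end{align*}

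Second, I would evaluate the two commutators under the hypothesis $n>m$, which is precisely where this case diverges from Corollary 3.14. Since $n-1>m-1$, equation (3.9) of Lemma 3.10 applies directly and gives $[\mathcal{E}_{(n-1)\delta+\alpha_2},\mathcal{F}_{(m-1)\delta+\alpha_2}]=k_1^{m-1}k_2^m\mathcal{E}_{(n-m)\delta}$. For the second commutator the essential point is that $m-n<0$, so the negative power of $\mathcal{T}_2^{-1}\Phi$ must be re-expressed: combining $\Phi^2=\mathrm{id}$ with $\mathcal{T}_1\Phi=\Phi\mathcal{T}_2$ from Proposition 2.6 yields $(\mathcal{T}_2^{-1}\Phi)^{-1}=\mathcal{T}_1\Phi$, whence $(\mathcal{T}_2^{-1}\Phi)^{m-n}(e_1)=(\mathcal{T}_1\Phi)^{n-m}(e_1)=\mathcal{E}_{(n-m)\delta+\alpha_1}$. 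Applying the intermediate claim (3.12), namely $[\mathcal{E}_{k\delta+\alpha_1},f_1]=-\mathcal{E}_{k\delta}k_1^{-1}$, with $k=n-m\ge 1$, gives $[\mathcal{E}_{(n-m)\delta+\alpha_1},f_1]=-\mathcal{E}_{(n-m)\delta}k_1^{-1}$.

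Third, I would substitute both evaluations and verify the cancellation. The first brace contributes $k_1^{n-1}k_2^n\cdot k_1^{m-1}k_2^m\mathcal{E}_{(n-m)\delta}=k_1^{n+m-2}k_2^{n+m}\mathcal{E}_{(n-m)\delta}$, while the second contributes $-\mathcal{E}_{(n-m)\delta}k_1^{-1}\cdot k_1^{n+m-1}k_2^{n+m}=-\mathcal{E}_{(n-m)\delta}k_1^{n+m-2}k_2^{n+m}$. To compare them I would push the $k$'s in the first term past $\mathcal{E}_{(n-m)\delta}$: since $\mathcal{E}_{(n-m)\delta}$ carries weight $(n-m)(\alpha_1+\alpha_2)$, relations (A2) give $k_1^ak_2^b\mathcal{E}_{(n-m)\delta}=(-1)^{(n-m)(a+b)}\mathcal{E}_{(n-m)\delta}k_1^ak_2^b$, and here $a+b=2(n+m-1)$ is even, so the sign is $+1$. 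Hence the two terms are exact negatives and sum to zero, proving $[\Psi(a(n)),\mathcal{F}_{m\delta}]=0$.

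Finally, the $\Omega$-analog follows formally. One checks by induction on $n$ (using the defining formulas for $\Psi(a(\pm n))$) that $\Omega(\Psi(a(n)))=\Psi(a(-n))$, and a direct computation on the recursions shows $\Omega(\mathcal{E}_{m\delta})=\mathcal{F}_{m\delta}$, hence $\Omega(\mathcal{F}_{m\delta})=\mathcal{E}_{m\delta}$. Since $\Omega$ is an anti-automorphism, $\Omega([A,B])=-[\Omega(A),\Omega(B)]$, so applying $\Omega$ to the vanishing commutator yields $[\Psi(a(-n)),\mathcal{E}_{m\delta}]=0$ for $n>m$. I expect the main obstacle to be the bookkeeping in the second step, specifically correctly identifying $(\mathcal{T}_2^{-1}\Phi)^{m-n}(e_1)$ as the positive real root vector $\mathcal{E}_{(n-m)\delta+\alpha_1}$ rather than a negative one: an error in inverting the braid operator would insert the wrong generator (an $f_1$ instead of an $e_1$, or an incorrect $k$-weight) into the final commutator and break the cancellation.
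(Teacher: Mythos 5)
Your proposal is correct and is essentially the fleshed-out version of what the paper intends, since the paper's own proof of this corollary is just ``Similar to that of Corollary 3.14'': you reuse the same master identity, evaluate the two bracket terms via (3.9) and the intermediate claim (3.12) under $n>m$, and check that the $k$-weight commutation sign is $+1$ so the terms cancel. The only slip is bibliographical — the skeleton you are reusing comes from the proofs of Corollaries 3.13 and 3.14 (not ``3.14 and 3.15,'' the latter being the statement itself) — and your identification $(\mathcal{T}_2^{-1}\Phi)^{m-n}(e_1)=\mathcal{E}_{(n-m)\delta+\alpha_1}$, which you rightly flag as the delicate point, is indeed the correct one.
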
 
\begin{proof}  
Similar to that of Corollary 3.14.
\end{proof} 

\begin{lemma}
For \(n,m\in\mathbb{Z}_{+}\), the commutation relations between \(\Psi(a(n))\) and \(\Psi(a(-m))\) satisfy  
\begin{align} 
[\Psi(a(n)), \Psi(a(-m))]  = \delta_{n-m,0}\dfrac{[2n]_q}{n} \cdot \dfrac{(k_1k_2)^{n} - (k_1k_2)^{-n}}{q - q^{-1}}.
\end{align}    
\end{lemma}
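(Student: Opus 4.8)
The relation to be verified is precisely the image under \(\Psi\) of the Heisenberg relation (D3), so the plan is to compute \([\Psi(a(n)),\Psi(a(-m))]\) by strong induction on \(m\), feeding in the already-established commutators of \(\Psi(a(n))\) with the imaginary root vectors \(\mathcal{F}_{j\delta}\). Before the induction I would record one structural fact: every monomial occurring in \(\Psi(a(n))\) has weight a multiple of \(\delta\), and a direct check from (A2) gives \(k_1k_2\,e_i=-e_i\,k_1k_2\) for \(i=1,2\), hence \(\gamma=k_1k_2\) commutes with every weight-\(j\delta\) element and in particular with each \(\Psi(a(n))\). This lets me move \(\Psi(a(n))\) freely past all the central factors \(\gamma^{\pm c}\) in the defining expression
\[
\Psi(a(-m))=\mathcal{F}_{m\delta}\gamma^{-m}+\frac{q-q^{-1}}{m}\sum_{r=1}^{m-1} r\,\Psi(a(-r))\,\mathcal{F}_{(m-r)\delta}\,\gamma^{-m+r}
\]
coming from Theorem 3.5, so that only the genuine commutators with the \(\mathcal{F}_{j\delta}\) and with the lower \(\Psi(a(-r))\) survive. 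The base case \(m=1\) is the special instance of the analysis below with an empty sum.

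For the cases \(n\ge m\) I would apply the Leibniz rule to the display above. In every term the index of the surviving \(\mathcal{F}_{j\delta}\) is \(j\in\{m\}\cup\{1,\dots,m-1\}\), hence \(j\le m\le n\), and every \(\Psi(a(-r))\) has \(r\le m-1<m\). When \(n>m\) all of these satisfy \(j<n\) and \(r<n\), so \([\Psi(a(n)),\mathcal{F}_{j\delta}]=0\) by the \(n>j\) vanishing corollary (Corollary 3.15) and \([\Psi(a(n)),\Psi(a(-r))]=\delta_{nr}(\cdots)=0\) by the induction hypothesis; thus the whole bracket is \(0=\delta_{nm}(\cdots)\). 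When \(n=m\) the only term with \(j=n\) is the leading one, \([\Psi(a(n)),\mathcal{F}_{n\delta}\gamma^{-n}]=[\Psi(a(n)),\mathcal{F}_{n\delta}]\gamma^{-n}\); substituting Corollary 3.14 and using \(k_1^nk_2^n\gamma^{-n}=(k_1k_2)^n(k_1k_2)^{-n}=1\) collapses this to exactly \(\tfrac{[2n]_q}{n}\cdot\tfrac{(k_1k_2)^n-(k_1k_2)^{-n}}{q-q^{-1}}\), while every remaining sum term again vanishes (its \(\mathcal{F}\)-index is \(<n\) and its \(\Psi(a(-r))\) has \(r<n\)). This yields the asserted right-hand side.

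The case \(n<m\) I would handle not by direct telescoping but by exploiting the \(\mathbb{C}\)-antiautomorphism \(\Omega\). First I would prove, by an induction on \(k\) parallel to the recursion above, that \(\Omega(\Psi(a(k)))=\Psi(a(-k))\): applying \(\Omega\) to the formula for \(\Psi(a(k))\) reverses the order of products, sends \(\mathcal{E}_{j\delta}\mapsto\mathcal{F}_{j\delta}\), \(\gamma\mapsto\gamma^{-1}\) and \((q-q^{-1})\mapsto-(q-q^{-1})\), and---after invoking \(\Omega(\Psi(a(r)))=\Psi(a(-r))\) for \(r<k\)---reproduces verbatim the defining formula for \(\Psi(a(-k))\). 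Since \(\Omega\) is an antiautomorphism, this gives
\[
\Omega\bigl([\Psi(a(n)),\Psi(a(-m))]\bigr)=[\Psi(a(m)),\Psi(a(-n))].
\]
For \(n<m\) the right-hand bracket has first index \(m\) larger than the second magnitude \(n\), a case already settled at induction level \(n<m\) and equal to \(0\) there; as \(\Omega\) is a bijection, the left bracket is \(0=\delta_{nm}(\cdots)\). This closes the induction and, incidentally, shows that Corollary 3.13 (the \(n<m\) commutator) is not needed along this route.

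I expect the main obstacle to be purely bookkeeping: tracking the signs and the scalar \((q-q^{-1})/k\) correctly through \(\Omega\) so that \(\Omega(\Psi(a(k)))=\Psi(a(-k))\) holds on the nose, and confirming the clean collapse \(k_1^nk_2^n\gamma^{-n}=1\) in the \(n=m\) case. Both are mechanical once the \(\gamma\)-centrality observation and the \(\mathcal{F}_{j\delta}\)-commutator corollaries are in hand; no identity beyond those and the recursive definition of \(\Psi(a(-m))\) is required.
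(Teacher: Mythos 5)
Your argument for the cases $n\ge m$ is essentially the paper's own proof: expand $\Psi(a(-m))$ by its recursive definition, apply the Leibniz rule, kill every term containing a $\mathcal{F}_{j\delta}$ with $j<n$ or a $\Psi(a(-r))$ with $r<n$ by Corollary 3.15 and the induction hypothesis, and for $n=m$ extract the single surviving term from Corollary 3.14, the collapse $k_1^nk_2^n\gamma^{-n}=1$ producing exactly the stated right-hand side. Where you genuinely diverge is the case $n<m$: the paper's written proof treats only $n>m$ and $n=m$ and then declares the lemma proved, so the $n<m$ case is left implicit (its Corollary 3.13, which computes the \emph{nonzero} bracket $[\Psi(a(n)),\mathcal{F}_{m\delta}]$ for $n<m$, is evidently intended to feed a direct cancellation argument there, but that argument is never carried out). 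Your route via the antiautomorphism $\Omega$ --- first verifying $\Omega(\Psi(a(k)))=\Psi(a(-k))$ by induction along the recursion (the order reversal, $\Omega(\mathcal{E}_{j\delta})=\mathcal{F}_{j\delta}$, $\Omega((k_1k_2)^{k})=(k_1k_2)^{-k}$ and the sign flip of $q-q^{-1}$ do reproduce the defining formula for $\Psi(a(-k))$ on the nose), then using that $\Omega$ reverses commutators to convert $[\Psi(a(n)),\Psi(a(-m))]$ with $n<m$ into the already-settled bracket $[\Psi(a(m)),\Psi(a(-n))]$ --- cleanly closes this gap and, as you note, makes Corollary 3.13 unnecessary. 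The preliminary observation that $\gamma=k_1k_2$ commutes with weight-$j\delta$ elements, which licenses pulling the $\gamma^{-m+r}$ factors out of every commutator, is correct and worth recording explicitly. Your proof is complete and, on the $n<m$ case, strictly more careful than the paper's.
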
  

\begin{proof}
We analyze two cases based on the relationship between \(n\) and \(m\).  

(I) When \(n > m\).  By the definition of \(\Psi(a(-m))\) in Theorem 3.5, we have  
\begin{align}
\Psi(a(-m)) = \mathcal{F}_{m\delta}(k_1k_2)^{-m} + \frac{q-q^{-1}}{m}\sum_{r=1}^{m-1}r\Psi(a(-r))\mathcal{F}_{(m-r)\delta}(k_1k_2)^{-m+r}.
\end{align}

By induction on \(r\),  
\(\Psi(a(n))\) commutes with \(\Psi(a(-r))\). Since \(n > m\), \(\Psi(a(n))\) commutes with all terms on the right-hand side of equation (3.38). 
Thus
\begin{align}
[\Psi(a(n)),\Psi(a(-m))]=0,\quad n>m.
\end{align}

(II) When \(n = m\).
Substituting equations (3.32), (3.36), and (3.39) into the following equation and performing the calculation, we obtain
\begin{align*}
\Psi(a(n))\Psi(a(-n))
&= \Psi(a(n))\Bigl(\mathcal{F}_{n\delta}(k_1k_2)^{-n} + \frac{q-q^{-1}}{n}\sum_{r=1}^{n-1}r\Psi(a(-r))\mathcal{F}_{(n-r)\delta}(k_1k_2)^{-n+r}\Bigr)
\\
&=\mathcal{F}_{n\delta}(k_1k_2)^{-n}\Psi(a(n)) +  \dfrac{[2n]_q}{n} \cdot \dfrac{(k_1k_2)^n - (k_1k_2)^{-n}}{q - q^{-1}}
\\
&\quad+\dfrac{q-q^{-1}}{n}\sum_{r=1}^{n-1}r\Psi(a(-r))\mathcal{F}_{(n-r)\delta}(k_1k_2)^{-n+r}\Psi(a(n))
\end{align*}
\begin{align*}
&=\Psi(a(-n))\Psi(a(n))+\dfrac{[2n]_q}{n} \cdot \dfrac{(k_1k_2)^n - (k_1k_2)^{-n}}{q - q^{-1}}.
\end{align*} 
 
From the conclusions of (I) and (II), we prove the validity of this lemma.
\end{proof}
\begin{lemma}
\(\Psi\) is an algebraic homomorphism.
\end{lemma}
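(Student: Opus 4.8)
Because $U^D_{\mathbf{q}}(\widehat{\mathfrak{sl}}_2)$ is presented by generators and relations, the plan is simply to check that the images $\Psi(x^\pm(k))$, $\Psi(a(l))$, $\Psi(k_2^{\pm1})$, $\Psi(\gamma^{\pm1})$ satisfy each defining relation (D1)--(D6); the assignment then extends uniquely to an algebra homomorphism, and surjectivity is already recorded. Throughout I would exploit that $\Psi$ intertwines the antiautomorphism $\Omega$ on the two sides---indeed the defining formula for $\Psi(a(-k))$ is literally the $\Omega$-image of that for $\Psi(a(k))$---so that every relation need only be checked on ``half'' of its index range, the rest following by applying $\Omega$ (and, where the two simple roots must be exchanged, $\Phi$). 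Relations (D1) and (D2) are dispatched first by a direct weight computation: from (A1)--(A2), $\Psi(\gamma)=k_1k_2$ scales a weight-$(c_1\alpha_1+c_2\alpha_2)$ element by $(-1)^{c_1+c_2}$ and $\Psi(k_2)=k_2$ by $(-1)^{c_2}q^{-2c_1+2c_2}$; evaluating these on the weights of $\mathcal E_{n\delta+\alpha_i}$, $\mathcal F_{n\delta+\alpha_i}$ and $\mathcal E_{n\delta}$ yields the centrality of $(k_1k_2)^{2n}$ and the (anti)commutation scalars of (D1)--(D2).

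For (D3) I would separate by the signs of the two indices. The opposite-sign instance is exactly Lemma 3.16. For equal signs, the recursion defining $\Psi(a(n))$ exhibits each $\Psi(a(n))$ $(n>0)$ as a polynomial in $k_1k_2$ and the pairwise-commuting imaginary vectors $\{\mathcal E_{j\delta}\}$ (Lemma 3.9), on which $k_1k_2$ acts centrally (its weight-eigenvalue on $j\delta$ is $1$), so $[\Psi(a(n)),\Psi(a(m))]=0$; the negative-negative case is then the $\Omega$-image. Relation (D4) for nonnegative modes of $x^+$ is precisely equation (3.24) of Remark 3.12, and (3.25) gives the $\alpha_2$ companion; the remaining modes and the $x^-$ version follow by transporting through $\Omega$ and $\Phi$. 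Relation (D5) is obtained by reindexing the quadratic relations of Lemma 3.7 (for the $\alpha_1$, i.e. $x^+$, strand) and Lemma 3.8 (for the $\alpha_2$ strand), reconciling the mode labels and extending to all of $\mathbb Z$ via $\Omega$ and $\Phi$.

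The main obstacle is relation (D6), the cross-commutator $[\Psi(x^+(l)),\Psi(x^-(l'))]$, since this is where the Cartan currents arise and they are defined only implicitly, through $k^\pm(z)=k_2^{\pm1}\exp\bigl(\pm(q-q^{-1})\sum_\ell a(\pm\ell)z^{\mp\ell}\bigr)$. I would organize the verification by the four sign patterns of $(l,l')$, reading $\Psi(x^\pm)$ off the identities stated just before Lemma 3.7. In the two ``same-type'' patterns the bracket pairs an $\alpha_1$-vector with an $\alpha_1$-vector (one positive, one negative) and is evaluated by (3.6) and (3.8) of Lemma 3.10, producing, up to a group-like factor, either a scalar (when $l+l'=0$) or an imaginary vector $\mathcal E_{(l+l')\delta}$. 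In the two ``opposite-type'' patterns the bracket pairs an $\alpha_1$-vector with an $\alpha_2$-vector, and the conversion formula $\mathcal E_{(a+b+1)\delta}=\mathcal E_{a\delta+\alpha_1}\mathcal E_{b\delta+\alpha_2}+(-1)^{a+b+1}q^{-2}\mathcal E_{b\delta+\alpha_2}\mathcal E_{a\delta+\alpha_1}$ again yields an imaginary vector $\mathcal E_{(l+l')\delta}$.

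The delicate step, which I expect to be the technical core, is to show that these $\{\mathcal E_{m\delta}\}$-expressions are precisely the modes of $\Psi(k^\pm)$. This is exactly what the recursive definition of $\Psi(a(\ell))$ is engineered to guarantee: it is the Newton-type inversion of the exponential, so that the mode coefficients of $k_2\exp\bigl((q-q^{-1})\sum_\ell\Psi(a(\ell))z^{-\ell}\bigr)$ agree with the $(k_1k_2)$-conjugated imaginary vectors produced by the brackets above. Establishing this exp/log correspondence uniformly in $m$, and checking its compatibility with $\Omega$ (which interchanges $k^+$ and $k^-$), completes the verification of (D6) and hence shows that $\Psi$ is an algebra homomorphism.
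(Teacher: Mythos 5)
Your proposal is correct and follows essentially the same route as the paper: surjectivity is immediate from the images of the generators, and the homomorphism property is checked relation by relation against (D1)--(D6) using Lemmas 3.7--3.16. If anything, your version is more complete than the paper's, since you explicitly handle the same-sign case of (D3) via Lemma 3.9 and the recursion for \(\Psi(a(n))\), and you identify the exp/log matching of the \(\Psi(a(\ell))\) with the modes of \(k^{\pm}(z)\) as the real content of (D6), both of which the paper leaves implicit.
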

\begin{proof}
We confirm that \(\Psi\) preserves the defining relations (D1)---(D6) of \(U_{\textbf{q}}^{D}(\widehat{\mathfrak{sl}}_{2})\). It is clear that  \(\Psi\) preserves (D1) and (D2), we are left to verify (D3)---(D6).

For (D3), it follows from lemma 3.16.

For (D4), since \(\Psi(x^{+}(k))=(\mathcal{T}_1\Phi)^k(e_1)\)
and \(\Psi(x^{-}(k))=(\mathcal{T}_1\Phi)^{-k}(f_1)\), applying \((\mathcal{T}_1\Phi)^k\)
and 
\((\mathcal{T}_1\Phi)^{-k}\)
to both sides of equations (3.16) and (3.27) respectively, it follows that \(\Psi\) preserves  (D4).

For (D5), from lemmas 3.7 and 3.8, it follows that \(\Psi\) preserves (D5).

(D6) is preserved through  the commutator relations such as \([\mathcal{E}_{n\delta+\alpha_i}, \mathcal{F}_{m\delta+\alpha_j}]\).

We have thus completed the proof of Theorem 3.5.
\end{proof}
\subsection{Proof of Theorem 3.6}

\textbf{Theorem 3.6.}
\textit{There exists a surjective} \(\Xi: U_{\textbf{q}}(\widehat{\mathfrak{sl}}_{2}) \to U_{\textbf{q}}^{D}(\widehat{\mathfrak{sl}}_{2})\) such that \(\Psi\Xi=\Xi\Psi=\text{\rm id}\).
\begin{proof}
We define \(\Xi\) on the generators as follows
\begin{align*}
&k_{1} \longmapsto \gamma k^{-1}_2, \quad e_{1} \longmapsto x^{+}(0), \quad f_{2} \longmapsto -k_2x^{+}(-1), \\
&k_{2} \longmapsto k_2, \qquad f_{1} \longmapsto x^{-}(0), \quad e_{2} \longmapsto -x^{-}(1)k^{-1}_2.
\end{align*}
Consequently, it is not difficult to see that \(\Xi \Psi = \Psi\Xi=\text{\rm id}\).  
\end{proof}
Up to now, we have proved the Drinfeld Isomorphism Theorem for the novel quantum affine algebra \(U_{\textbf{q}}(\widehat{\mathfrak{sl}}_2)\).

\section*{Acknowledgements}
R.S. Zhuang is supported by the Scientific Research Funds at China University of
Geosciences (Wuhan) (Project No. 2025017). G. Feng is supported by the NSFC Grant No. 12526586. N.H. Hu is supportedby the NSFC Grant No. 12171155, and in part supported by the Science and Technology Com-mission of Shanghai Mumicipality (Project No. 22DZ2229014).




\end{document}